\documentclass[12pt]{amsart}

\setlength{\topmargin}{-0.5cm} \setlength{\textwidth}{15cm}
\setlength{\textheight}{22.6cm} \setlength{\topmargin}{-0.25cm}
\setlength{\headheight}{1em} \setlength{\headsep}{0.5cm}
\setlength{\oddsidemargin}{0.40cm}
\setlength{\evensidemargin}{0.40cm}
\usepackage{amsmath}
\usepackage{amssymb}
\usepackage{amsthm}
\usepackage{amscd}
%
\newtheorem{thm}{Theorem}[section]
\newtheorem{lem}[thm]{Lemma}
\newtheorem{lem-dfn}[thm]{Lemma-Definition}
\newtheorem{prop}[thm]{Proposition}
\newtheorem{cor}[thm]{Corollary}

\theoremstyle{definition}
\newtheorem{defn}[thm]{Definition}
\newtheorem{exam}[thm]{Example}
\newtheorem{ex}[thm]{Example}

\newtheorem{quest}[thm]{Question}

\newtheorem{prob}[thm]{Problem}

\newtheorem*{acknowledgement}{Acknowledgement}
\theoremstyle{remark}

\newtheorem{rem}[thm]{Remark}
\numberwithin{equation}{section}

\newcommand{\thmref}[1]{Theorem~\ref{#1}}
\newcommand{\lemref}[1]{Lemma~\ref{#1}}

\newcommand{\proref}[1]{Proposition~\ref{#1}}


\DeclareMathOperator{\Spec}{Spec}
\DeclareMathOperator{\spec}{Spec}

\DeclareMathOperator{\supp}{Supp}





\DeclareMathOperator{\di}{div}

\DeclareMathOperator{\core}{core}
%

\newcommand{\m}{\mathfrak m}
\newcommand{\frm}{\mathfrak{m}}

%
\newcommand{\PP}{\mathbb P}

\newcommand{\Q}{\mathbb Q}

\newcommand{\C}{\mathbb C}
\newcommand{\N}{\mathbb N}

\newcommand{\cal}{\mathcal}

\newcommand{\cC}{\mathcal C}

\newcommand{\cL}{\mathcal L}

\newcommand{\cO}{\mathcal O}

\newcommand{\cR}{\mathcal R}

\newcommand{\OO}{\mathcal{O}}


\newcommand{\rmH}{\mathrm{H}}


\renewcommand{\t}{\widetilde}

\renewcommand{\:}{\colon}

\newcommand{\pg}{p_g}
\pagestyle{plain}
\begin{document}
\title{Good ideals and $p_g$-ideals in two-dimensional normal singularities}
\author{Tomohiro Okuma}
\address[Tomohiro Okuma]{Department of Mathematical Sciences,
Faculty of Science, Yamagata University, Yamagata, 990-8560, Japan.}
\email{okuma@sci.kj.yamagata-u.ac.jp}
\author{Kei-ichi Watanabe}
\address[Kei-ichi Watanabe]{Department of Mathematics, College of
Humanities and Sciences,
Nihon University, Setagaya-ku, Tokyo, 156-8550, Japan}
\email{watanabe@math.chs.nihon-u.ac.jp}
\author{Ken-ichi Yoshida}
\address[Ken-ichi Yoshida]{Department of Mathematics,
College of Humanities and Sciences,
Nihon University, Setagaya-ku, Tokyo, 156-8550, Japan}
\email{yoshida@math.chs.nihon-u.ac.jp}
\thanks{This work was partially supported by JSPS
Grant-in-Aid for Scientific Research (C)
Grant Numbers 23540068, 23540059, 25400050}
\subjclass[2000]{Primary 13A35; Secondary 14B05, 14J17}
\keywords{good ideal, Ulrich ideal, $p_g$-cycle, Gorenstein ring}
\begin{abstract}
In this paper, we introduce the notion of $p_g$-ideals and $p_g$-cycles,
which inherits nice properties of integrally closed ideals on rational singularities.
As an application, we prove an existence of
good ideals for two-dimensional Gorenstein normal local rings. Moreover,
we classify all Ulrich ideals for two-dimensional simple elliptic
singularities. 
\end{abstract}
\maketitle
\section{Introduction}
In a two-dimensional rational singularity, Lipman showed in \cite{Li}
that every integrally closed ideal is \lq\lq stable" in the sense that
$I^2 = IQ$ holds for every minimal reduction $Q$ of $I$. He also shows that
if $I,J$ are integrally closed ideals, then the product $IJ$ is also
integrally closed.
(Later, Cutkosky \cite{Cu} showed that in a 
two-dimensional normal local ring $A$,
if $I^2$ is integrally
closed for every integrally closed ideal $I$, then $A$ is a rational
singularity. )
These facts play very important role to study ideal theory on a two-dimensional rational singularity.
\par
On the other hand, as far as the authors know, almost nothing was done concerning
ideal theory of non-rational singularities.

\par
Let $(A,\frm)$ be a normal local ring of dimension $2$ and 
$ f: X \to \Spec A$ be a resolution of singularity of $A$.
Then $p_g(A) = \ell_A(H^1(X,\cO_X))$ is an important invariant of $A$
(here we denote by $\ell_A(M)$ the length of an $A$ module $M$
with finite length) and called the \lq\lq geometric genus" of $A$.
A rational singularity is characterized by $p_g(A)=0$ and
$A$ is a \lq\lq minimally elliptic singularity" if $A$ is Gorenstein and
$p_g(A)=1$.

\par
Now, take an integrally closed $\frm$-primary ideal $I$.
Then $I$ has a resolution $f : X\to \Spec A$ with $I\cO_X$ 
 invertible. In this case, $I\cO_X = \cO_X(-Z)$ for some \lq\lq anti-nef" cycle $Z$ and we denote
$I= I_Z$.
The important fact is that 
$\ell_A(H^1(X,\cO_X(-Z)))$ plays an important
role for the property of $I_Z$.
We can show that $\ell_A(H^1(X,\cO_X(-Z)))\le p_g(A)$ for every anti-nef
cycle $Z$ such that $\cO_X(-Z)$ has
no fixed component.
We call $Z$ a $p_g$-cycle and $I_Z$ a $p_g$-ideal
if we have $\ell_A(H^1(X,\cO_X(-Z)))= p_g(A)$.

\par
A surprising fact is that the class of $p_g$-ideal inherits nice
properties of integrally
closed ideals of rational singularities (in a rational singularity,
every integrally closed
ideal is a $p_g$-ideal by our definition).
Namely, if $I_Z$ is a $p_g$-ideal, then $I_Z$ is stable and if $Z,Z'$
are $p_g$-cycles,
then $I_ZI_{Z'}$ is integrally closed and
 also a $p_g$-ideal.
The idea of this paper is to develop ideal theory for normal two-dimensional local ring of
given $p_g$ and to investigate what difference causes the difference of $p_g$ to the ideal theory of the ring.

\par
We apply the notion of $p_g$-ideals to show existence of good ideals on
any two-dimensional
normal Gorenstein ring.
The notion of good ideals was defined by S. Goto and S. Iai in \cite{GIW}.

\begin{defn}[Goto--Iai--Watanabe \cite{GIW}]
Let $(A, \m)$ be a Cohen-Macaulay local ring, $I$ be an $\frm$-primary ideal of $A$
and $Q$ a minimal reduction of $I$.
We say $I$ is a good ideal if it satisfies the following conditions$:$
\begin{enumerate}
\item $I^2 = IQ$.
\item $Q:I=I$.
\end{enumerate}
\end{defn}

Let us explain the organization of the paper. In this paper, the ring is 
a two-dimensional normal local ring containing an algebraically closed field. 
In Section 2, we prepare the notions and terminologies which we need later
(e.g. minimally elliptic singularity, good ideals, Ulrich ideals and so on).  
Furthermore, we give fundamental tools in this paper: 
Propositions \ref{Ineq-h1}, \ref{vareps} and 
Vanishing theorem (Theorem \ref{t:Lv}), Kato's Riemann-Roch Theorem (Theorem \ref{t:kato}). 
\par 
In Section 3, we introduce the notion of $p_g$-cycles and $p_g$-ideals; 
see Theorem \ref{t:lepg}, Definition \ref{pgcycle}  and Lemma \ref{l:h^1}. 
Note that every anti-nef cycle in a rational singularity is a $p_g$-cycle in our sense.  
Moreover, $p_g$-cycles enjoy nice properties$:$ e.g. 
the sum of $p_g$-cycles is always a $p_g$-cycle (see Theorem \ref{t:sg}). 
\par 
In Section 4, we prove an existence of $p_g$-ideals for any two-dimensional normal local ring. 
As an application, we prove 
the existence of good ideals as the main theorem in this paper. 

\begin{thm}[See Theorem \ref{t:G}]
Let $(A,\m)$ be a two-dimensional normal local ring. Then$:$
\begin{enumerate}
\item There exists a resolution on which $p_g$-cycles exist.
\item If $A$ is non-regular Gorenstein, then it has a good ideal.
\end{enumerate}
\end{thm}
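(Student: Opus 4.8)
The plan is to deduce both statements from the existence of $p_g$-cycles together with the following characterization, which is the real content: \emph{a $p_g$-ideal $I_Z$ in a Gorenstein ring $A$ is a good ideal if and only if the corresponding $p_g$-cycle satisfies $Z\cdot K_X=0$.} To prove this, fix a resolution $f\colon X\to\Spec A$ and a $p_g$-cycle $Z$ with $\cO_X(-Z)$ globally generated, set $I=I_Z$, and let $Q$ be a minimal reduction. Since $I$ is a $p_g$-ideal it is stable, so $I^2=IQ$ and condition $(1)$ of the definition holds automatically; also $\ell_A(I/Q)=\ell_A(A/Q)-\ell_A(A/I)=e(I)-\ell_A(A/I)$ because $Q$ is a parameter ideal in a Cohen--Macaulay ring and a reduction of $I$. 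As $A/Q$ is Artinian Gorenstein, double-annihilator duality gives $\ell_A((Q:I)/Q)=\ell_A\big((A/Q)/(I/Q)\big)=\ell_A(A/I)$, and since $I\subseteq Q:I$ always we get $\ell_A((Q:I)/I)=2\ell_A(A/I)-e(I)\ge 0$, with $Q:I=I\iff 2\ell_A(A/I)=e(I)$. On the other hand, applying Kato's Riemann--Roch (\thmref{t:kato}) to $0\to\cO_X(-Z)\to\cO_X\to\cO_Z\to 0$ together with $\ell_A(H^1(\cO_X(-Z)))=p_g(A)$ yields $\ell_A(A/I_Z)=\chi(\cO_Z)=-\tfrac12 Z\cdot(Z+K_X)$, while $e(I_Z)=-Z^2$ because $\cO_X(-Z)$ is base point free. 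Hence $2\ell_A(A/I_Z)-e(I_Z)=-Z\cdot K_X$, proving the characterization; note in particular $Z\cdot K_X\le 0$ for every $p_g$-cycle.

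\textbf{Part (1).} This is the existence theorem for $p_g$-cycles. Starting from any resolution and passing to a high enough model (on which the relevant ideals become invertible and base point free) one produces an anti-nef cycle $Z$ with $\cO_X(-Z)$ base point free and $\ell_A(H^1(\cO_X(-Z)))=p_g(A)$. By \thmref{t:lepg} the inequality $\le p_g(A)$ is automatic for fixed-component-free $\cO_X(-Z)$, and the substance is to attain equality: I would arrange the model and the cycle so that the canonical surjection $H^1(\cO_X)\twoheadrightarrow H^1(\cO_X(-Z))$ -- coming from $0\to g\cO_X\to\cO_X(-Z)\to\cO_X(-Z)|_{C}\to 0$ for a general $g\in I_Z$, where $C$ is the strict transform of $\{g=0\}$ (a curve with no compact component, so $H^1(C,-)=0$) -- is an isomorphism.

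\textbf{Part (2).} Let $A$ be non-regular Gorenstein. If $p_g(A)=0$, then $A$ is a rational double point, the canonical cycle $Z_K$ vanishes on every resolution, and the maximal-ideal cycle $Z_\frm$ on the minimal resolution is a $p_g$-cycle with $I_{Z_\frm}=\frm$ and $Z_\frm\cdot K_X=0$; thus $\frm$ is good. Assume $p_g(A)\ge 1$. By the characterization we must produce a $p_g$-cycle with $Z\cdot K_X=0$; since $Z\cdot K_X\le 0$ always and $Z\cdot K_X=-\sum_i(Z_K)_i(Z\cdot E_i)$ with $Z$ anti-nef, such a cycle cannot have full support on a resolution where $Z_K$ is effective with full support -- in particular not on the minimal resolution, where $0\ne Z_K$ has all coefficients $\ge 1$. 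So I would blow up: the new $(-1)$-curve over a general point of an exceptional curve of discrepancy $c$ has discrepancy $c+1$, hence iterating (starting from a component of discrepancy $\le -1$, which exists since $Z_K\neq 0$) we reach a resolution $X'$ carrying a component $F$ of discrepancy $0$, i.e. $(Z_{K'})_F=0$, while a component of negative discrepancy survives. Writing $\{E_i'\}$ for the exceptional curves of $X'$ and $\{E_i'^{*}\}$ for the dual basis ($E_i'^{*}\cdot E_j'=-\delta_{ij}$), one has $E_i'^{*}\cdot K_{X'}=-(\text{discrepancy of }E_i')$; thus for any component $E_i'$ of discrepancy $0$ the cycle $E_i'^{*}$ is anti-nef, positive with full support, and orthogonal to $K_{X'}$. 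Choosing the centres of the blow-ups so that the (degree-zero) restrictions of $\cO_{X'}(-E_F^{*})$ to the positive-genus exceptional curves are trivial -- a finite list of codimension-one conditions in the families of possible centres, so satisfiable -- makes $\cO_{X'}(-E_F^{*})$ fixed-component-free (after perhaps scaling to clear denominators). One then checks that $E_F^{*}$ is a $p_g$-cycle: writing $\cO_{X'}(-E_F^{*})=\omega_{X'}\otimes\cO_{X'}(-E_F^{*}-K_{X'})$ and using the vanishing theorem (\thmref{t:Lv}, i.e. $H^1(\omega_{X'})=0$), $\ell_A(H^1(\cO_{X'}(-E_F^{*})))$ is the dimension of a cokernel of jet-type maps along the relevant cycle, which must be shown to equal $p_g(A)$. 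Granting this, $\sum t_i E_i'^{*}$ over discrepancy-$0$ indices $i$ is again a $p_g$-cycle by additivity (\thmref{t:sg}) and is orthogonal to $K_{X'}$, so its ideal is good by Part (1) and the characterization.

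\textbf{Main obstacle.} The hard step is the last verification in Part (2): showing that the $p_g$-cycle property is \emph{preserved} by the blow-ups needed to create a discrepancy-$0$ component, i.e. that the cohomology $H^1(\cO_{X'}(-E_F^{*}))$ lands exactly at $p_g(A)$ -- no larger (ruled out by \thmref{t:lepg}), no smaller (the genuine danger, since enlarging a cycle tends to make $H^1$ drop). I expect this to come down to a careful comparison of $H^0(X',\omega_{X'})$, a submodule of $\omega_A\cong A$ of colength $p_g(A)$, against the sections of $\omega_{X'}$ restricted to the added infinitesimal neighbourhoods, combined with the additivity and base-change properties of $p_g$-cycles from Section 3; the generic choice of the centres of blow-up is precisely what keeps $\cO_{X'}(-E_F^{*})$ fixed-component-free throughout the construction.
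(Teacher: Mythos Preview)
Your derivation of the characterization ``$I_Z$ is good $\Leftrightarrow K_X\cdot Z=0$'' for a $p_g$-cycle $Z$ in a Gorenstein ring is correct and is exactly Proposition~\ref{p:Ggood} of the paper; the Riemann--Roch computation and the Gorenstein duality count $\ell_A((Q:I)/Q)=\ell_A(A/I)$ are the same ingredients used there.

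However, Parts~(1) and~(2) as you present them are not proofs. For Part~(1) you only restate the target (``arrange the model and the cycle so that the surjection is an isomorphism'') without any construction. For Part~(2) your strategy via dual-basis cycles $E_F^{*}$ of discrepancy-$0$ components has a genuine gap which you yourself flag: you do not show that (a multiple of) $E_F^{*}$ is a $p_g$-cycle. Your suggestion to force $\cO_{X'}(-E_F^{*})|_{E_i'}$ trivial on positive-genus components by ``generic choice of centres'' is not a proof, and even granting triviality on each curve one still needs $\cO_{C_{X'}}(-Z)\cong\cO_{C_{X'}}$ on the \emph{cohomological cycle} (Proposition~\ref{p:CX}), which is a scheme-theoretic condition, not merely curve-by-curve numerical triviality. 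The vanishing/duality sketch you give at the end does not address this.

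The paper's approach (Proposition~\ref{p:exs}) sidesteps exactly this difficulty. Instead of guessing a cycle and then trying to verify the $p_g$-condition, it \emph{builds $Z$ out of a principal divisor}: choose $W$ so anti-nef that $\cO_X(-W)$ is generated and $H^1(\cO_X(-W))=0$; pick a general $h\in I_W$ with $\di(h)=W+H_0$ transverse to $E$; then blow up the finite set $\supp(C_0)\cap H_0$ (with $C_0=Z_{K_X}$ in the Gorenstein case) repeatedly until the strict transform $H_n$ is disjoint from $C_n:=b^*C_{n-1}-F_n$. Setting $Z:=\di_Y(h)-H_n$ one gets $\cO_{C_n}(-Z)\cong\cO_{C_n}(-\di_Y(h))\cong\cO_{C_n}$ \emph{for free}, since a principal divisor restricts trivially and $H_n\cap C_n=\emptyset$. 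The only thing left to check is that $h^1(\cO_{C_n})=p_g(A)$ survives the blow-ups, and this is the short Lemma~\ref{l:coh}. Because $C_0=Z_{K_X}$ one gets $C_n=Z_{K_Y}$, whence $K_Y\cdot Z=-Z_{K_Y}\cdot Z=0$ automatically. Thus both the $p_g$-property and the orthogonality to $K$ come out of the construction rather than having to be imposed and verified afterwards, which is precisely the step your outline is missing.
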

  
\par 
In Section 5,  we prove that an $\m$-primary ideal $I$ 
of a two-dimensional rational singularity $A$ is a good ideal if and only if 
it is an integrally closed ideal represented  on the minimal resolution, which 
is a generalization of \cite[Theorem 7.8]{GIW} 
for non-Gorenstein case. 

\par 
In Section 6, we evaluate the number of minimal generators of integrally 
closed ideals which is represented by some anti-nef cycle 
in terms of intersection numbers of related cycles. 

\par 
In Section 7, we investigate Ulrich ideals 
($3$-generated good ideals)  of 
minimally elliptic singularities. 
For instance, we prove that there exist no Ulrich ideals for any  minimally elliptic singularity of degree 
$e \ge 5$; see Theorem \ref{Ul-min-ell}. 
Moreover, we classify all Ulrich ideals for simple  elliptic singularities (Theorem \ref{Main-ell}).

\section{Preliminaries}

Throughout this paper, let $A$ be an excellent normal local ring of
dimension $2$
with the unique maximal ideal $\m$ such that $A$
contains an algebraically closed field $k \cong A/\m$ unless otherwise
specified.
Let $f \colon X \to \Spec A$ be a resolution of singularities
with exceptional divisor $E=f^{-1}(\m)$.

\subsection{Cycle} 
A divisor supported in $E$ is called a {\em cycle}.
Let $E=\bigcup_{i=1}^rE_i$ be the decomposition into irreducible
components of $E$.
A divisor $D$ is said to be {\em nef} if the intersection numbers $DE_i$
are nonnegative for all $E_i$;
$D$ is said to be {\em anti-nef} if $-D$ is nef.
If $DE_i=0$ for all $E_i$, then we say that $D$ is numerically trivial
and write $D\equiv 0$.
Since the intersection matrix $(E_iE_j)$ is negative definite, if a
cycle $Z\ne 0$ is anti-nef, then $Z\ge E$.
The resolution $f\: X\to \spec A$ is said to be {\em minimal} if $X$
contains no $(-1)$-curves $C$ 
(i.e., $C\cong \PP^1$, $C^2=-1$).

\subsection{Reduction, Multiplicity}
Let $I$ be an $\m$-primary ideal of $A$.
Then the Hilbert function $\ell_A(A/I^{n+1})$ is
a polynomial for sufficiently large $n$. That is,
there exists a polynomial $P_I(n)$ of the form
\[
e_0(I) \genfrac{(}{)}{0pt}{0}{n+2}{2}
-e_1(I)\genfrac{(}{)}{0pt}{0}{n+1}{1}
+e_2(I)
\]
such that $\ell_A(A/I^{n+1})=P_I(n)$ for $n \gg 0$.
Then $e_0(I)$, $e_1(I)$ and $e_2(I)$ are integers and
$e_0(I)$ is called the \textit{multiplicity} of $I$.
On the other hand,
we can take a parameter ideal $Q=(a,b)$ so that $I^{r+1}=QI^r$ for some
integer $r \ge 0$.
Such an ideal $Q$ is called a \textit{minimal reduction} of $I$.
Then we have $e_0(I)=e_0(Q)=\ell_A(A/Q)$.

\subsection{Integrally closed ideal}
\par
Let $\overline{I}$ denote the \textit{integral closure} of $I$, that is,
$\overline{I}$ is an ideal which consists of 
all solutions $z$
for some equation with coefficients $c_i \in I^i$:
$Z^{n}+c_1Z^{n-1}+\cdots +c_{n-1}Z+c_n=0$.
Then $I \subseteq \overline{I} \subseteq \sqrt{I}$.

\par
For any cycle $Z$ on $X$, we write
\begin{equation}
I_Z=H^0(\cO_X(-Z)).
\end{equation}
Since $A=H^0(\cO_X)$, $I_Z$ is an $\m$-primary integrally closed ideal if $Z>0$.
An $\m$-primary ideal $I$ is said to be {\em represented on} $X$ if the
ideal sheaf $I\cO_X$ is invertible and $I=H^0(I\cO_X)$.
If $I$ is represented on $X$, there exists an anti-nef cycle $Z$ such
that $I\cO_X=\cO_X(-Z)$; $I$ is also said to be {\em represented by} $Z$.
Note that $I$ is represented on 
some resolution if and only if it is
integrally closed (cf. \cite{Li}).
\par
Note that if $I=I_Z$ and $\mathcal{O}_X(-Z)$ is generated, then
$e_0(I)=-Z^2$ and $\overline{I^n}=I_{nZ}$.

\subsection{Geometric genus, Singularity}
When the cohomology group $H^i(\cal F)$ is an $A$-module,
we denote by $h^i(\cal F)$
the length $\ell_A(H^i(\cal F))$.
It is known that $h^1(\cO_X)$ is independent of the choice of the
resolution.
The invariant $p_g(A):=h^1(\cO_X)$ is called the {\em geometric genus}
of $A$.

\begin{defn}[\textbf{Rational singularity, Elliptic singularity}]
\label{Sing} 
A ring $A$ is said to be
\textit{a rational singularity}
(resp. \textit{a minimally elliptic singularity})
if $p_g(A)=0$ (resp. $A$ is Gorenstein and $p_g(A)=1$). 
\par 
Assume that $A$ is a minimally elliptic singularity, and 
let $Z_f$ be the fundamental cycle.  
Then $e=-Z_f^2$ is called the {\em degree} of $A$. 
It is known that $e_0(\m)=\max\{2,e\}$,  
$\ell_A(\m/\m^2)=\max\{e,3\}$
and that $A$ is a complete intersection if and only if $e \le 4$ (e.g. Laufer \cite{la.minell}). 

\par
A ring $A$ is said to be a
\textit{simple elliptic singularity of degree $e$}
if the exceptional set $E_0$ of the minimal resolution of 
$\Spec A$ is a nonsingular elliptic curve
with $E_0^2=-e$.
\end{defn}

\par
Every toric singularity and quotient singularity are
rational singularities.
For instance, $k[[x,y,z]]/(x^a+y^b+z^c)$ is rational if and only if
$1/a+1/b+1/c >1$.
Note that any simple elliptic singularity
is a minimally elliptic singularity. 

\begin{exam} 
\label{sim-ell-ex} 
Let $k$ be an algebraically closed field of
characteristic zero or $p \ge 5$.
\begin{enumerate}
\item A hypersurface $k[[x,y,z]]/(x^2+y^3+z^6)$
is a simple elliptic singularity
of degree $1$.
\item A hypersurface $k[[x,y,z]]/(x^2+y^4+z^4)$
is a simple elliptic singularity
of degree $2$.
\item A hypersurface $k[[x,y,z]]/(x^3+y^3+z^3)$
is a simple elliptic singularity
of degree $3$.
\item A complete intersection ring
$k[[x,y,z,w]]/(y^2-xz,w^2-yz-x^2)$ is a simple elliptic singularity of
degree $4$.
\end{enumerate}
\end{exam}

\subsection{Good ideal, Ulrich ideal}
In this subsection,
let $A$ be a Cohen-Macaulay local ring of any dimension $d$.

\begin{defn}[\textbf{Good ideal, Ulrich ideal}] 
\label{good-Ulrich} 
Let $I$ be an $\m$-primary ideal.
Then$:$
\begin{enumerate}
\item $I$ is called \textit{a stable ideal}
if $I^2=QI$ for some minimal reduction $Q$ of $I$.
\item $I$ is called \textit{a good ideal} if $I$ is stable and
$Q \colon I=I$ for some minimal reduction $Q$ of $I$.
\item $I$ is called \textit{an Ulrich ideal} if $I$ is stable and
$I/I^2$ is a free $A/I$-module.
\end{enumerate}
\end{defn}

Now assume that $I$ is stable.
Then $I \subset Q:I$ and $I^2\subset Q$ for any minimal
reduction $Q$ of $I$.
By the characterization of core of ideals,
a goodness of $I$ is equivalent to
the condition $\core(I)=I^2$
(see \cite[Example 3.1]{CPU}).
Recall that $\core(I)$ is the intersection of all minimal reductions of
$I$.
\par
If we assume that $A$ is a Gorenstein ring, then by duality theorem, we
have
\[
\ell_A(A/I) = \ell_A(Q:I/Q).
\]
Hence in this case, under the condition $I^2=QI$, $I$ is a good ideal if
and only if
$2 \cdot \ell_A(A/I) = e_0(I)$ (see \cite{GIW}).
Moreover, $I$ is an Ulrich ideal if and only if
$I$ is a good ideal with $\mu_A(I)=d+1$, where $\mu_A(I)$
denotes the cardinality of a minimal set of generators of $I$ (see
\cite{GOTWY1}).
So, in this case,
Ulrich ideals are typical examples of good ideals.

\par
We note a simple but useful lemma for good ideals.

\begin{lem}
\label{good_lem} 
Let $I'$ be an ideal containing $I$ and integral over
$I$ and assume that $I'^2 = QI'$ holds. Then if $I$ is a good ideal,
then $I=I'$.
In particular, if $A$ is a two-dimensional rational singularity and $I$
is a good
ideal of $A$, then $I$ is integrally closed.
\end{lem}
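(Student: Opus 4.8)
The plan is to exploit the fact that $I'$ is integral over $I$, so $I$ and $I'$ share a common minimal reduction $Q$, and then use the two good-ideal conditions to force equality of lengths. First I would observe that since $I \subseteq I'$ and $I'$ is integral over $I$, any minimal reduction $Q$ of $I$ is also a reduction of $I'$; in the two-dimensional case $Q$ is generated by a system of parameters, hence $Q$ is a minimal reduction of both $I$ and $I'$. Now suppose $I$ is good, so $I^2 = QI$ and $Q : I = I$. From the hypothesis $I'^2 = QI'$ and $I \subseteq I'$ we get $QI \subseteq QI' = I'^2$, and also $I' \cdot I' \subseteq I' \cdot \overline{Q} $-type containments; the cleanest route is to compare $I'$ with $Q : I$. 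Indeed, since $I'$ is integral over $I$, we have $I'{}^2 = QI' \supseteq QI = I^2$, and multiplying the inclusion $I \subseteq I'$ by $I'$ gives $I I' \subseteq I'^2 = QI'$; iterating, one checks $I' I^n \subseteq Q^n I'$ for all $n$.

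The key step is to show $I' \subseteq Q : I$. Take $x \in I'$. Since $I'$ is integral over $I$ with $I'^2 = QI'$, we have $xI' \subseteq I'^2 = QI'$. In particular $xI \subseteq xI' \subseteq QI' \subseteq \overline{Q}\cap \dots$—but I need $xI \subseteq Q$, not merely $xI \subseteq QI'$. To get this, note $QI' = Q I' \subseteq Q \overline{I} $, and because $Q$ is a reduction of $I$ with $I^2 = QI$, the Rees algebra argument gives $I' \subseteq \overline{I} = \bigcup_n (I^{n+1} : I^n)$; combined with stability $I^{n+1} = Q^n I$ one deduces $I I' \subseteq Q I' \cap \ldots$ Actually the slick finish: from $I'^2 = QI'$ and $I \subseteq I'$, for $x \in I'$ we have $xI \subseteq I'^2 = QI' = Q\cdot I' $. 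Now $I' \cdot I \subseteq I'^2 = QI'$, so $I \cdot xI \subseteq QI'\cdot I \subseteq Q I'^2 = Q^2 I' $, and inductively $x I^{n} \subseteq Q^{n} I'$. Since $I$ is $\m$-primary and $Q$ a parameter ideal, $\bigcap_n Q^n I' = 0$ is false in general, so instead I localize/complete and use that $Q I'$ is contained in $Q$ up to integral closure: precisely $Q I' \subseteq \overline{Q} = Q$ because $Q$ is a parameter ideal in a normal (hence $Q$-integrally-closed in codimension reasons) — more safely, $QI' \subseteq Q\overline{I}$ and $Q\overline{I} = \overline{QI} = \overline{I^2}$; but $I^2 = QI$ is already integrally closed as $Q$ is a reduction... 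This shows $xI \subseteq I^2 = QI \subseteq Q$, hence $x \in Q : I = I$. Therefore $I' \subseteq I$, so $I = I'$.

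For the "in particular" statement: let $A$ be a two-dimensional rational singularity and $I$ a good ideal. Set $I' = \overline{I}$, the integral closure. By Lipman's theorem (recalled in the introduction), every integrally closed ideal in a two-dimensional rational singularity is stable, so $I'^2 = QI'$ for a minimal reduction $Q$ (which we may take to be a minimal reduction of $I$, since $\overline{I}$ and $I$ have the same reductions). The first part of the lemma then gives $I = \overline{I}$, i.e., $I$ is integrally closed.

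The main obstacle I anticipate is the passage $xI \subseteq QI' \Rightarrow xI \subseteq Q$: one must be careful that $QI'$, while contained in $Q\overline{I} = \overline{QI} = \overline{I^2} = I^2 = QI \subseteq Q$, genuinely lands in $Q$; the point is that $I^2 = QI$ forces $I^2$ to be a reduction-stable, hence the integral closure $\overline{QI}$ equals $QI$ only after knowing $\overline{I^2}=I^2$, which in turn needs $I$ integrally closed — a potential circularity. The honest fix is to argue directly: $xI' \subseteq I'^2 = QI'$ and $Q$ is generated by a regular sequence $a,b$, so writing elements of $QI'$ in terms of $a,b$ and using that $I'$ is a finitely generated module integral over $I$, a determinantal (Cayley–Hamilton) argument shows $x \cdot I \subseteq Q$ directly. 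I expect this determinant trick, rather than any conceptual issue, to be where the real work lies; everything else is formal.
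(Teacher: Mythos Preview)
Your overall strategy is right and is exactly what the paper does: show $I'\subseteq Q:I$ and conclude using $Q:I=I$. But you have manufactured a difficulty that does not exist. You write ``I need $xI\subseteq Q$, not merely $xI\subseteq QI'$'' and then embark on integral-closure manipulations, a Cayley--Hamilton argument, and worries about circularity. None of this is needed: $Q$ is an ideal of $A$ and $I'\subseteq A$, so $QI'\subseteq Q$ is automatic. Thus for $x\in I'$ one has
\[
xI\subseteq xI'\subseteq I'^2=QI'\subseteq Q,
\]
hence $x\in Q:I=I$. That is the entire proof. The paper phrases it as $I'\subseteq Q:I'\subseteq Q:I=I$, using the containment $I\subseteq I'$ for the middle step; it is the same argument.

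Your ``in particular'' paragraph is fine: take $I'=\overline{I}$, note that $I$ and $\overline{I}$ share the minimal reduction $Q$, and invoke Lipman's result (stated in the paper's introduction) that integrally closed ideals in a two-dimensional rational singularity are stable, giving $\overline{I}^2=Q\overline{I}$. Then the first part applies.
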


\begin{proof} Since $I' \subset Q:I'$,
we have $I' \subset Q:I'\subset Q:I =I$.
Hence $I=I'$.
\end{proof}

\subsection{Fundamental short exact sequences}
We say that $\cO_X(-Z)$ {\em has no fixed component}
if $H^0(\cO_X(-Z))\ne H^0(\cO_X(-Z-E_i))$ for every $E_i\subset E$,
i.e., the base locus of the linear system $H^0(\cO_X(-Z))$
does not contain any component of $E$.
\par
Suppose that $\cO_X(-Z)$ has no fixed component and $h\in I_Z$ a general
element.
Then we obtain the following exact sequence:
\begin{equation}\label{eq:OO(-Z)}
0 \to \cO_X \xrightarrow{\times h} \cO_Z(-Z) \to \cC \to 0,
\end{equation}
where $\cC$ is supported on the strict transform of the curve $\spec A/
(h)$.
Note that the base points of $H^0(\cO_X(-Z))$ is contained in $\supp \cC$.

\begin{prop} 
\label{Ineq-h1} 
Let $A$ be a two-dimensional normal local ring as above.
Let $Z$, $Z'$ be anti-nef cycles
on some resolution $X \to \Spec A$.
Suppose that $\cO_X(-Z)$ has no fixed components.
Then we have
\[
h^{1}(\mathcal{O}_X(-Z'))
\ge h^{1}(\mathcal{O}_X(-Z-Z')).
\]
\end{prop}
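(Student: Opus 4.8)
The plan is to exploit the fact that $\cO_X(-Z)$ has no fixed component, which gives us the short exact sequence \eqref{eq:OO(-Z)}. Tensoring that sequence with the line bundle $\cO_X(-Z')$ (which is locally free, so exactness is preserved) yields
\begin{equation*}
0 \to \cO_X(-Z') \xrightarrow{\times h} \cO_X(-Z-Z') \to \cC \otimes \cO_X(-Z') \to 0,
\end{equation*}
where $\cC$ is supported on the strict transform of $\spec A/(h)$, hence has one-dimensional support disjoint from $E$ (away from finitely many points), and in particular $H^2(\cC \otimes \cO_X(-Z')) = 0$. Taking the long exact sequence in cohomology, the relevant tail reads
\begin{equation*}
\cdots \to H^1(\cO_X(-Z')) \to H^1(\cO_X(-Z-Z')) \to H^1(\cC\otimes\cO_X(-Z')) \to \cdots,
\end{equation*}
but since $\cC$ has one-dimensional support, $H^1$ of any coherent sheaf supported there vanishes, so the map $H^1(\cO_X(-Z')) \to H^1(\cO_X(-Z-Z'))$ is surjective. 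Comparing lengths gives the desired inequality $h^1(\cO_X(-Z')) \ge h^1(\cO_X(-Z-Z'))$.

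First I would verify carefully that $h\in I_{Z}$ can be chosen "general" in the precise sense needed: that $\spec A/(h)$ is reduced along $E$ (or at least that its strict transform meets $E$ properly), so that $\cC$ is genuinely supported in dimension at most one. This is exactly the setup behind \eqref{eq:OO(-Z)}, which the excerpt has already introduced, so I can cite it directly; the key extra observation is just that tensoring with an invertible sheaf does not disturb the support of $\cC$. Next I would record the two vanishing facts for sheaves with $1$-dimensional support on $X$: $H^2$ vanishes because the support has dimension $1$, and $H^1$ vanishes because the support is affine (it is finite over $\spec A/(h)$, which is affine) — or, more simply, because $\cC$ is supported away from the exceptional fiber except at finitely many points and pushes forward to a module of finite length whose higher cohomology is zero. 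Either way, the map on $H^1$ is forced to be onto.

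The one point that needs slight care — and the step I expect to be the main (minor) obstacle — is confirming that $\cC \otimes \cO_X(-Z')$ still has the vanishing $H^1 = 0$; this is immediate once one knows $\cC$ is supported on a curve not contained in $E$, since twisting by a line bundle preserves both the support and its local structure, and a coherent sheaf with support of dimension $\le 1$ lying over the affine base $\spec A$ has vanishing $H^i$ for $i \ge 1$ after pushing forward (indeed $f_*\cC$ has finite length). One should also note the argument does not require $Z'$ to have no fixed component — only $Z$ does — which is consistent with the asymmetric hypotheses in the statement. Everything else is the standard long-exact-sequence bookkeeping together with the fact, recalled in the excerpt, that all these $H^1$'s are finite-length $A$-modules so that "surjective" upgrades to the length inequality.
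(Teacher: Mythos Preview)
Your argument is correct and follows essentially the same route as the paper: tensor the sequence \eqref{eq:OO(-Z)} by the invertible sheaf $\cO_X(-Z')$, take the long exact cohomology sequence, and use that $H^1(\cC\otimes\cO_X(-Z'))=0$ because $\cC$ is supported on an affine curve. The paper's proof is simply the terse version of what you wrote; your remarks about $H^2$ are harmless but unnecessary, since vanishing of $H^1(\cC\otimes\cO_X(-Z'))$ alone already forces the surjection $H^1(\cO_X(-Z'))\twoheadrightarrow H^1(\cO_X(-Z-Z'))$.
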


\begin{proof}
Let $h$ be a general element of $I_Z$.
Then the short exact sequence (\ref{eq:OO(-Z)})
implies that
\[
H^1(\mathcal{O}_X(-Z')) \to
H^{1}(\mathcal{O}_X(-Z-Z')) \to
H^{1}(\mathcal{C} \otimes \mathcal{O}_X(-Z'))=0
\]
since $\cC$ is a coherent sheaf on an affine space.
\end{proof}

\par
Let $Z_1$, $Z_2$ be anti-nef cycles on the resolution $X \to \Spec A$ 
so that $\cO_X(-Z_1)$ and $\cO_X(-Z_2)$ are generated. 
Take general elements $f_i \in I_{Z_i}$ for
each $i=1,2$, so that there exists the following exact sequence$:$
\begin{equation} \label{Secondeq}
0 \to \mathcal{O}_X
\stackrel{(f_1,f_2)}{\longrightarrow} \mathcal{O}_X(-Z_1) \oplus
\mathcal{O}_X(-Z_2)
\stackrel{\genfrac{(}{)}{0pt}{}{-f_2}{f_1}}{\longrightarrow}
\mathcal{O}_X(-Z_1-Z_2) \to 0.
\end{equation}
Taking a cohomology yields
\[
0 \to A \to I_{Z_1} \oplus I_{Z_2}
\stackrel{\genfrac{(}{)}{0pt}{}{-f_2}{f_1}}{\longrightarrow}
I_{Z_1+Z_2} \to H^1(\mathcal{O}_X).
\]
Hence we have the following.

\begin{prop} 
\label{vareps} 
Under the notation as above, if we put
\begin{equation}
\varepsilon(Z_1,Z_2):=\varepsilon(I_{Z_1},I_{Z_2}):=
\ell_A(I_{Z_1+Z_2}/f_1I_{Z_2}+f_2I_{Z_1}),
\end{equation}
then we have
\begin{enumerate}
\item
$0 \le \varepsilon(Z_1,Z_2) \le p_g(A)$.
\item
$\varepsilon(Z_1,Z_2)
= p_g(A)-h^1(\mathcal{O}_X(-Z_1))-h^1(\mathcal{O}_X(-Z_2))
+h^1(\mathcal{O}_X(-Z_1-Z_2))$.
\end{enumerate}
In particular, $\varepsilon(Z_1,Z_2)$ is independent on
the choice of general elements $f_1 \in I_1$,$f_2\in I_2$.

\par
If $Z_1=Z_2=Z$, then $Q=( f_1, f_2)$ is a minimal reduction of $I=I_1=I_2$ and 
$\varepsilon(Z,Z)=\varepsilon(I,I)=
\ell_A(\overline{I^2}/QI)$.
\end{prop}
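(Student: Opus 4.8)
The plan is to extract everything from the two short exact sequences already displayed, using the vanishing of $H^1$ of coherent sheaves on the affine scheme $\Spec A$ and the fact that $H^2(\cO_X)=0$ (since $X$ is a resolution of a two-dimensional local ring, $H^2$ of any coherent sheaf supported... more precisely $H^2(\cO_X(-Z))=0$ because $X$ has dimension $2$ and the higher direct images live in degrees $\le 1$; equivalently, the Leray spectral sequence for $f$ collapses). First I would take cohomology of \eqref{Secondeq}. Because $\cO_X(-Z_1-Z_2)$ again has $H^2=0$, and $H^1(\cO_X(-Z_1)\oplus\cO_X(-Z_2))=H^1(\cO_X(-Z_1))\oplus H^1(\cO_X(-Z_2))$, the long exact sequence reads
\[
0 \to A \to I_{Z_1}\oplus I_{Z_2} \xrightarrow{\binom{-f_2}{f_1}} I_{Z_1+Z_2} \to H^1(\cO_X) \to H^1(\cO_X(-Z_1))\oplus H^1(\cO_X(-Z_2)) \to H^1(\cO_X(-Z_1-Z_2)) \to 0,
\]
where the last zero is exactly \proref{Ineq-h1} applied to $Z=Z_1$, $Z'=Z_2$ (surjectivity of $H^1(\cO_X(-Z_2))\to H^1(\cO_X(-Z_1-Z_2))$; note the hypothesis that $\cO_X(-Z_1)$ is generated guarantees no fixed component). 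The image of $\binom{-f_2}{f_1}$ is precisely $f_1 I_{Z_2}+f_2 I_{Z_1}$ inside $I_{Z_1+Z_2}$, so its cokernel is the module whose length is $\varepsilon(Z_1,Z_2)$ by definition.

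Next I would read off both assertions. From the six-term exact sequence above, the cokernel of $\binom{-f_2}{f_1}$ injects into $H^1(\cO_X)$, and its image is the kernel of $H^1(\cO_X)\to H^1(\cO_X(-Z_1))\oplus H^1(\cO_X(-Z_2))$. Hence $\varepsilon(Z_1,Z_2)=\ell_A(\ker)$, which gives $0\le\varepsilon(Z_1,Z_2)\le h^1(\cO_X)=p_g(A)$, proving (1). For (2), I would compute the alternating sum of lengths along the exact block
\[
0\to H^1(\cO_X)/(\text{image})\to H^1(\cO_X(-Z_1))\oplus H^1(\cO_X(-Z_2))\to H^1(\cO_X(-Z_1-Z_2))\to 0,
\]
which yields $p_g(A)-\varepsilon(Z_1,Z_2)=h^1(\cO_X(-Z_1))+h^1(\cO_X(-Z_2))-h^1(\cO_X(-Z_1-Z_2))$; rearranging gives exactly the stated formula. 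Since the right-hand side of (2) involves no choice of general element, the independence of $\varepsilon(Z_1,Z_2)$ on $f_1,f_2$ follows at once.

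Finally I would treat the case $Z_1=Z_2=Z$. That $Q=(f_1,f_2)$ is a minimal reduction of $I=I_Z$: with $\cO_X(-Z)$ generated we have $e_0(I)=-Z^2$, and choosing $f_1,f_2$ general makes their proper transforms meet $E$ transversally off the base locus, so $(f_1,f_2)\cO_X=\cO_X(-Z)$ and $e_0(Q)=-Z^2=e_0(I)$, forcing $Q$ to be a reduction; it is minimal since $\dim A=2$. For the length identity, note $f_1 I_Z+f_2 I_Z = QI$, and $I_{Z+Z}=I_{2Z}=\overline{I^2}$ (using $\overline{I^n}=I_{nZ}$ from the generated case recorded just before \proref{Ineq-h1}), so $\varepsilon(Z,Z)=\ell_A(I_{2Z}/(f_1I_Z+f_2I_Z))=\ell_A(\overline{I^2}/QI)$.

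The main obstacle I anticipate is bookkeeping rather than depth: one must be careful that \proref{Ineq-h1} applies (the "no fixed component" hypothesis is supplied by "$\cO_X(-Z_1)$ is generated"), that the map $\binom{-f_2}{f_1}$ on global sections really has image $f_1I_{Z_2}+f_2I_{Z_1}$ — this is just chasing the Koszul-type sequence — and that $H^2(\cO_X(-Z_1-Z_2))=0$ so the sequence genuinely terminates. Identifying $I_{2Z}=\overline{I^2}$ and $Q=(f_1,f_2)$ as a genuine minimal reduction in the last paragraph uses only facts recorded in Section~2.3.
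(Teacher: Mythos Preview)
Your proposal is correct and follows exactly the approach the paper intends: the paper derives the proposition directly from the long exact sequence of \eqref{Secondeq} (writing only the $H^0$ part and ``Hence we have the following''), and you have simply supplied the remaining bookkeeping---the surjectivity at the $H^1$ level via \proref{Ineq-h1}, the alternating-sum computation for (2), and the identification $I_{2Z}=\overline{I^2}$, $f_1I+f_2I=QI$ in the final paragraph. There is nothing to add or correct.
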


\subsection{Canonical divisor, Vanishing theorem}
Let $K_X$ denote the canonical divisor on $X$.
Since the intersection matrix $(E_iE_j)$ is
negative-definite, there exists a $\Q$-divisor $Z_{K_X}$ supported in
$E$ such that $K_X+Z_{K_X}\equiv 0$.
It is known that: $Z_{K_X}\ge 0$ if $X$ is the minimal resolution;
$Z_{K_X}=0$ if and only if $A$ is rational
Gorenstein and $X$ is the minimal resolution; $K_X$ is linearly
equivalent to $-Z_{K_X}$
if and only if $A$ is Gorenstein.
\par
The following theorem is a generalization of 
Grauert--Riemenschneider
vanishing theorem in two dimensional case.

\begin{thm}[Laufer {\cite[Theorem 3.2]{la.rat}}, 
cf.{\cite[Ch. 4, Exe.15]{chap}}] 
\label{t:Lv} 
For any nef divisor $D$ on $X$,
we have $H^1(X,\cO_X(K_X+D))=0$.
\end{thm}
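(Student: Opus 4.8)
The plan is to reduce to the two-dimensional Grauert--Riemenschneider vanishing theorem $H^1(X,\cO_X(K_X))=0$, which is the case $D=0$, by a d\'evissage along the components of $E$. Since $\Spec A$ is affine, for any coherent sheaf $\cG$ on $X$ one has $H^1(X,\cG)\cong\Gamma(\Spec A,R^1f_*\cG)$, so the assertion is equivalent to $R^1f_*\cO_X(K_X+D)=0$, and by the theorem on formal functions the latter can be tested on the infinitesimal neighbourhoods of $E$; thus it suffices to show $H^1(W,\cO_X(K_X+D)|_W)=0$ for every effective cycle $W$ supported on $E$, and all cohomology groups occurring are $A$-modules of finite length. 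The one input from curve theory I would use is that on an integral projective curve $C$ over $k$ (possibly singular) a line bundle of degree $\ge 2p_a(C)-1$ has vanishing $H^1$: by duality on $C$ its $H^1$ is dual to $H^0$ of $\omega_C$ twisted by the inverse, a line bundle of negative degree, hence with no sections.

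The workhorse is the adjunction sequence. For a cycle $W=W'+E_i$ with $W'\ge0$ there is the restriction sequence $0\to\cO_{E_i}(-W')\to\cO_W\to\cO_{W'}\to0$; twisting by $\cO_X(K_X+D)$ and using the adjunction isomorphism $\cO_X(K_X+E_i)|_{E_i}\cong\omega_{E_i}$ to identify the first term, the cohomology sequence reads
\[
H^1\!\left(\cO_X(K_X+D-W')|_{E_i}\right)\to H^1\!\left(\cO_X(K_X+D)|_{W}\right)\to H^1\!\left(\cO_X(K_X+D)|_{W'}\right)\to0,
\]
and the left-hand group is $H^1$ of a line bundle on $E_i$ of degree $2p_a(E_i)-2+D\cdot E_i-W'\cdot E_i-E_i^2$. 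Since $D$ is nef, $D\cdot E_i\ge0$, and since $E_i^2\le-1$, this degree is $\ge2p_a(E_i)-1$ as soon as $W'\cdot E_i\le0$, in which case the left-hand group vanishes. Consequently, if one can write $W$ as an increasing chain $0=W_0<W_1<\dots<W_N=W$ with $W_j=W_{j-1}+E_{i_j}$ and $W_{j-1}\cdot E_{i_j}\le0$ at every step, then induction on $j$ (the base case being the zero sheaf) gives $H^1(\cO_X(K_X+D)|_{W})=0$, and together with the reduction above this proves the theorem.

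The main obstacle is exactly the construction of such a computation sequence. For a general effective cycle the components cannot be ordered so that each partial sum meets the next one non-positively: once the support is connected one is eventually forced to append a component it meets positively, and then the degree estimate can fail. Overcoming this is the substance of Laufer's argument: one treats only a cofinal family of cycles (for instance the multiples $nZ_0$ of an anti-nef cycle $Z_0$ with $\cO_X(-Z_0)$ generated, which suffices by formal functions) and chooses the filtration adapted to $K_X+D$, exploiting the negative-definiteness of $(E_iE_j)$ to keep the curve-level cohomology under control even when individual restrictions have nonzero $H^1$. I expect this bookkeeping to be the only non-formal step. In characteristic zero it can be bypassed altogether: since $f$ is birational, $D$ is automatically $f$-big, so relative Kawamata--Viehweg vanishing gives $R^if_*\cO_X(K_X+D)=0$ for $i>0$ directly.
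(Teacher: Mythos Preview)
The paper does not supply a proof of this statement; it is quoted from Laufer and Reid, so there is no in-paper argument to compare against. I will therefore just assess your sketch on its own.

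Your reduction via formal functions and the d\'evissage sequence are set up correctly, and your curve-level criterion (degree $\ge 2p_a(E_i)-1$ forces $H^1=0$) is fine. The gap is exactly where you flag it, but it is not ``bookkeeping.'' The increasing filtration $0=W_0<W_1<\cdots<W_N=W$ with $W_{j-1}\cdot E_{i_j}\le 0$ that your argument needs \emph{does not exist} in general, even for very small cycles. Already for an $A_2$-configuration ($E_1,E_2$ smooth rational $(-2)$-curves with $E_1E_2=1$) and $W=E_1+E_2$, after adding either component the next step has $W_1\cdot E_{i_2}=1>0$; passing to $nZ_0$ for an anti-nef $Z_0$ does not help, since $Z_0=E_1+E_2$ here. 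So the obstruction is structural, not a matter of choosing the sequence cleverly, and your description of how to ``keep the curve-level cohomology under control even when individual restrictions have nonzero $H^1$'' does not correspond to an actual argument.

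The missing idea is to dualize on the cycle before peeling. By Serre duality on the one-dimensional projective scheme $W$ (with $\omega_W=\cO_W(K_X+W)$ by adjunction),
\[
H^1\!\big(\cO_W(K_X+D)\big)\;\cong\;H^0\!\big(\cO_W(W-D)\big)^{\vee},
\]
so it suffices to show $H^0(\cO_W(W-D))=0$ for every effective cycle $W>0$. Now use the \emph{other} filtration: for $W=W'+E_i$ one has
\[
0\to \cO_{W'}(W'-D)\to \cO_W(W-D)\to \cO_{E_i}(W-D)\to 0.
\]
By negative definiteness there is always some $E_i\le W$ with $WE_i<0$, hence $(W-D)E_i\le WE_i<0$ since $D$ is nef, so the rightmost $H^0$ vanishes and $H^0(\cO_W(W-D))\cong H^0(\cO_{W'}(W'-D))$. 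Induction on the total coefficient of $W$ finishes. This is the characteristic-free argument; your Kawamata--Viehweg shortcut is valid but only in characteristic zero, whereas the paper works over an arbitrary algebraically closed residue field.
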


\subsection{Riemann-Roch formula}
Let us recall Kato's Riemann-Roch formula which is very useful in order
to calculate colength.
For any invertible sheaf $\cL$ on $X$,
we define $\chi (\cL)$ by
\[
\chi(\cL)
=\ell_A\left(H^0(X\setminus E, \cL)/H^0(X,\cL)\right)
+h^1(\cL).
\]
Note that $\chi(\cO_X)=p_g(A)$ since $A$ is normal.

\begin{thm}[\textbf{Kato's Riemann-Roch formula}{\cite{kato}}]
\label{t:kato} 
For a cycle $Z>0$, we have
\[
\chi(\cL(-Z))-\chi(\cL)=-\frac{Z^2+K_XZ}{2}+\cL Z.
\]
In particular,
\[
\ell_A(A/I_Z) + h^1(\cO_X(-Z))=-\dfrac{Z^2+K_XZ}{2}+p_g(A).
\]
\end{thm}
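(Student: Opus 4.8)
The plan is to establish the first identity by induction on the cycle $Z$, reducing it to the case of a single prime component, and then to obtain the ``in particular'' statement by specializing $\cL=\cO_X$. Before the induction I would record two finiteness facts that make the local Euler characteristic meaningful: for an invertible sheaf $\cL$ the quotient $H^0(X\setminus E,\cL)/H^0(X,\cL)$ has finite length (the two modules need not individually, but $f_*\cL$ is a coherent subsheaf of the direct image of $\cL|_{X\setminus E}$ agreeing with it off $\m$, so the quotient is supported at $\m$), while for every coherent $\cF$ the module $H^1(X,\cF)$ is of finite length and $H^2(X,\cF)=0$ --- because $R^{1}f_*\cF$ is supported at $\m$, $R^{\ge 2}f_*\cF=0$ as the fibres of $f$ are at most one-dimensional, and $\Spec A$ is affine.

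The key step is the case $Z=E_i$ for a prime component $E_i\subseteq E$. Tensoring $0\to\cO_X(-E_i)\to\cO_X\to\cO_{E_i}\to 0$ with $\cL$ gives $0\to\cL(-E_i)\to\cL\to\cL|_{E_i}\to 0$; since the first map is an isomorphism over $X\setminus E_i\supseteq X\setminus E$ we get $H^0(X\setminus E,\cL(-E_i))=H^0(X\setminus E,\cL)$, and feeding this into the long exact cohomology sequence on $X$ produces an exact sequence of finite-length modules
\[
0\to H^0(X,\cL)/H^0(X,\cL(-E_i))\to H^0(\cL|_{E_i})\to H^1(\cL(-E_i))\to H^1(\cL)\to H^1(\cL|_{E_i})\to 0.
\]
Counting alternating lengths and using the equality of sections over $X\setminus E$ yields
\[
\chi(\cL(-E_i))-\chi(\cL)=\ell_A\big(H^0(\cL|_{E_i})\big)-\ell_A\big(H^1(\cL|_{E_i})\big),
\]
which is the Euler characteristic over $k$ of the invertible sheaf $\cL|_{E_i}$ on the integral projective curve $E_i$. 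Classical Riemann--Roch on $E_i$ makes this $\deg(\cL|_{E_i})+1-p_a(E_i)=\cL E_i+1-p_a(E_i)$, and adjunction $\deg K_{E_i}=(K_X+E_i)E_i=2p_a(E_i)-2$ turns it into $\cL E_i-\tfrac{1}{2}(E_i^2+K_XE_i)$, the asserted formula for $Z=E_i$.

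For a general cycle $Z>0$ I would induct on the sum of the coefficients of $Z$: write $Z=Z'+E_j$ with $Z'\ge 0$ and $E_j$ a component, apply the single-component case to $\cL(-Z')$ to get $\chi(\cL(-Z))-\chi(\cL(-Z'))=\cL E_j-Z'E_j-\tfrac{1}{2}(E_j^2+K_XE_j)$, add the inductive identity $\chi(\cL(-Z'))-\chi(\cL)=\cL Z'-\tfrac{1}{2}(Z'^2+K_XZ')$, and expand $(Z'+E_j)^2=Z'^2+2Z'E_j+E_j^2$ to recover the formula for $Z$. For the last assertion, put $\cL=\cO_X$: then $\cO_X\cdot Z=0$, $\chi(\cO_X)=p_g(A)$, $H^0(X,\cO_X(-Z))=I_Z$, and $H^0(X\setminus E,\cO_X(-Z))=H^0(X\setminus E,\cO_X)=A$ because $A$ is normal of dimension two; hence $\chi(\cO_X(-Z))=\ell_A(A/I_Z)+h^1(\cO_X(-Z))$ and the first identity becomes the second.

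The step I expect to be the main obstacle is the bookkeeping around the local Euler characteristic: one must not treat $\ell_A(H^0(X\setminus E,\cL))$ or $\ell_A(H^0(X,\cL))$ as finite, and one must verify that the five-term sequence above is genuinely exact --- the crucial point being that $H^0(X,\cL)/H^0(X,\cL(-E_i))$ injects into $H^0(\cL|_{E_i})$, which is exactly what the identification of sections over $X\setminus E$ provides. Once that is in hand, only the standard reduction to a curve, together with Riemann--Roch and adjunction on $E_i$, remains.
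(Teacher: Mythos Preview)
The paper does not give its own proof of \thmref{t:kato}; the result is simply quoted from Kato \cite{kato}. So there is nothing to compare against, and the question reduces to whether your argument is correct. It is: the induction on the number of prime components, together with the restriction exact sequence, Riemann--Roch on the projective curve $E_i$ over $k=A/\m$, and adjunction, is exactly the standard route to this formula, and your bookkeeping for the local Euler characteristic $\chi(\cL)$ is accurate. One small expository quibble: the injectivity of $H^0(X,\cL)/H^0(X,\cL(-E_i))\hookrightarrow H^0(\cL|_{E_i})$ is automatic from the long exact cohomology sequence and does not require the identification $H^0(X\setminus E,\cL(-E_i))=H^0(X\setminus E,\cL)$; that identification is needed only when you subtract the two $\chi$'s and want the ``sections over $X\setminus E$'' terms to cancel. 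Otherwise the argument is clean and complete.
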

\section{$p_g$-cycles and $p_g$-ideals}

The main aim of this section is to introduce the notion
of $p_g$-cycles and $p_g$-ideals.
We first show the following theorem, which is the key result in this paper.

\begin{thm}
\label{t:lepg} 
Let $Z>0$ be a cycle.
Suppose that $\cO_X(-Z)$ has no fixed component.
Then we have the following.
\begin{enumerate}
\item $h^1(\cO_X(-Z))\le p_g(A)$.
\item If $h^1(\cO_X(-Z))=p_g(A)$,
then $\cO_X(-Z)$ is generated $($by global sections$)$.
\end{enumerate}
\end{thm}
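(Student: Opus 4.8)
The plan is to reduce everything to the fundamental short exact sequence \eqref{eq:OO(-Z)}. Pick a general element $h \in I_Z$; since $\cO_X(-Z)$ has no fixed component, $h$ vanishes on $E$ only along the finitely many base points, and we obtain
\[
0 \to \cO_X \xrightarrow{\times h} \cO_X(-Z) \to \cC \to 0,
\]
where $\cC$ is supported on the strict transform $C'$ of the affine curve $C = \spec A/(h)$, together with the (finite) set of base points. Because $\cC$ is a coherent sheaf whose support is proper over the affine scheme $\spec A/(h)$ — indeed $\cC$ is a quotient of $\cO_X(-Z)|_{C'}$ and $C'$ maps finitely to $C$ — we get $H^1(\cC) = 0$. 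Taking cohomology of the sequence therefore yields a surjection $H^1(\cO_X) \twoheadrightarrow H^1(\cO_X(-Z))$, which immediately gives part (1): $h^1(\cO_X(-Z)) \le h^1(\cO_X) = p_g(A)$.

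For part (2), suppose $h^1(\cO_X(-Z)) = p_g(A)$, so that the surjection above is an isomorphism; equivalently the connecting map $H^0(\cC) \to H^1(\cO_X)$ in the long exact sequence is zero, and hence the map $H^0(\cO_X(-Z)) \to H^0(\cC)$ is surjective. Now $\cO_X(-Z)$ being generated by global sections is equivalent to its having no base points, i.e. to the stalk $\cO_X(-Z)_x$ being generated by $I_Z$ for each $x$ in the (finite) base locus $B \subset \supp\cC$. The strategy is to localize the above surjectivity statement at each such point: choosing $h$ general ensures the strict transform $C'$ of $(h)$ is smooth at every base point and meets $E$ transversally there, so near $x$ the sheaf $\cC$ can be identified with $\cO_{C'}(-Z)/\cO_{C'}(-Z)\cdot(\text{coordinate})$ — concretely a sum of length-one skyscrapers at the base points — and surjectivity of $H^0(\cO_X(-Z)) \to H^0(\cC)$ then says precisely that there is a section of $\cO_X(-Z)$ not vanishing at $x$ along $C'$; since $h$ itself generates $\cO_X(-Z)$ transversally to $C'$, the two together generate the stalk, so $x$ is not a base point after all. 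As $x$ was an arbitrary point of the base locus, $\cO_X(-Z)$ is base-point free, hence generated.

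The main obstacle is the bookkeeping at the base points in part (2): one must argue carefully that the comparison between $\cC$ and a sum of skyscraper sheaves is valid for a \emph{general} $h$, so that surjectivity onto $H^0(\cC)$ genuinely translates into "no base point" rather than just "the base locus has multiplicity dropping by one". This is where generality of $h$ (smoothness and transversality of the strict transform at each base point, and that $h$ has no common component with any further section) does the real work; I expect this to require invoking that a general member of a base-point-free-away-from-$E$ linear system is smooth along $E$ off its base locus, plus a local computation identifying the cokernel $\cC$ near each base point. Once that local picture is pinned down, combining it with the already-established surjectivity $H^0(\cO_X(-Z)) \twoheadrightarrow H^0(\cC)$ closes the argument.
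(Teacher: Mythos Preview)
Your approach is essentially the paper's: use the exact sequence \eqref{eq:OO(-Z)}, get the surjection $H^1(\cO_X)\twoheadrightarrow H^1(\cO_X(-Z))$ for (1), and in the equality case use surjectivity of $H^0(\cO_X(-Z))\to H^0(\cC)$ for (2). That is exactly what the paper does.

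However, your local bookkeeping in (2) is more elaborate than needed, and the ``main obstacle'' you flag is not really there. You do not need $h$ to be transversal or $C'$ smooth at the base points, nor do you need $\cC$ to look like a sum of length-one skyscrapers. The point is simply that $\cC$ is a coherent sheaf on an affine scheme, hence globally generated. Given any $p\in\supp\cC$, pick $s\in H^0(\cC)$ generating $\cC_p$, lift it to $\tilde s\in H^0(\cO_X(-Z))$ via the surjection, and observe that $\tilde s$ generates $\cO_X(-Z)_p$: indeed $\cO_X(-Z)_p$ is free of rank one, the image of $h$ lies in $\m_p\cdot\cO_X(-Z)_p$ (since $p\in\supp\cC$), and $\tilde s$ maps to a generator of the quotient $\cC_p$, so Nakayama finishes. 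Away from $\supp\cC$ the section $h$ already generates. So the surjectivity onto $H^0(\cC)$ directly gives base-point-freeness, with no genericity beyond ``$\cO_X(-Z)$ has no fixed component'' required.
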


\begin{proof}
We use the exact sequence \eqref{eq:OO(-Z)}.
\par \noindent
(1) It follows from Proposition \ref{Ineq-h1} because
$h^1(\cO_X)=p_g(A)$.
\par \noindent
(2)
If $h^1(\cO_X(-Z))=p_g(A)$, then the restriction $H^0(\cO_Z(-Z)) \to
H^0(\cC)$ is surjective. 
This implies that $H^0(\cO_X(-Z))$ has no base
points.
\end{proof}

\begin{defn}[\textbf{$p_g$-cycle, $p_g$-ideal}] 
\label{pgcycle} 
A cycle $Z>0$ is called a {\em $p_g$-cycle} if $\cO_X(-Z)$ is generated
and $h^1(\cO_X(-Z))=p_g(A)$.
An $\m$-primary ideal $I$ is called a {\em $p_g$-ideal} if $I$ is represented by a $p_g$-cycle on some resolution.
The definition of $p_g$-ideal is independent of the representation of
the ideal by \lemref{l:h^1}.
\end{defn}

\begin{ex} 
\label{Lipman} 
If $A$ is rational, then every anti-nef cycle
is a $p_g$-cycle.
In fact, Lipman \cite{Li} proved that if $A$ is rational and $Z>0$ is an
anti-nef cycle on $X$,
then $\cO_X(-Z)$ is generated and $H^1(\cO_X(-Z))=0$.
\end{ex}

\par
A birational morphism $\phi\: Y\to \spec A$ is called a {\em partial
resolution}
if $Y$ is normal and $\phi$ induces an isomorphism
$Y\setminus\phi^{-1}(\m)\cong \spec A\setminus\{\m\}$.

\begin{lem}
\label{l:h^1} 
Let $I$ be an $\m$-primary ideal, and
let $f_1\:X_1\to \spec(A)$ and $f_2\:X_2\to \spec(A)$
be partial resolutions with only rational singularities.
Assume that $I$ is represented by a cycle $Z_i$ on $X_i$ for $i=1,2$.
Then $h^1(\cO_{X_1}(-Z_1))
=h^1(\cO_{X_2}(-Z_2))$.
\end{lem}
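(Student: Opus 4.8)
The plan is to reduce both sides to a common refinement and to exploit the fact that, over rational singularities, pushing forward along a resolution changes neither $H^0$ nor $H^1$ of the relevant sheaves. First I would choose a resolution $g\colon X\to \Spec A$ that dominates both $X_1$ and $X_2$, say via proper birational morphisms $\pi_i\colon X\to X_i$; since $X_1$ and $X_2$ have only rational singularities, such an $X$ exists. Let $W_i$ be the cycle on $X$ with $\pi_i^*(\cO_{X_i}(-Z_i))=\cO_X(-W_i)$, i.e.\ the total transform of $Z_i$; because $I$ is represented by $Z_i$ on $X_i$, the ideal sheaf $I\cO_X$ equals $\cO_X(-W_i)$ (the pullback of an invertible ideal sheaf stays invertible and equals the ideal sheaf generated by $I$), so in fact $W_1=W_2=:W$ and $\cO_X(-W)$ is globally generated with $H^0(\cO_X(-W))=I$.

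Next I would show $h^1(\cO_{X_i}(-Z_i))=h^1(\cO_X(-W))$ for $i=1,2$, which immediately gives the lemma. For this, consider the Leray spectral sequence for $\pi_i$ applied to $\cO_X(-W)=\pi_i^*\cO_{X_i}(-Z_i)$. By the projection formula, $R^q\pi_{i*}\cO_X(-W)=\cO_{X_i}(-Z_i)\otimes R^q\pi_{i*}\cO_X$. The key input is that $\pi_i\colon X\to X_i$ is a resolution of a space with only rational singularities, so $\pi_{i*}\cO_X=\cO_{X_i}$ and $R^1\pi_{i*}\cO_X=0$ (and higher vanish for dimension reasons). Hence $R^q\pi_{i*}\cO_X(-W)=0$ for $q>0$ and the spectral sequence degenerates, yielding $H^j(X,\cO_X(-W))\cong H^j(X_i,\cO_{X_i}(-Z_i))$ for all $j$; in particular the lengths of the $H^1$'s agree. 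Combining the two isomorphisms for $i=1,2$ completes the argument.

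The step requiring the most care is the vanishing $R^1\pi_{i*}\cO_X=0$. One must make sure it really follows from rationality of the singularities of $X_i$ rather than from $X_i$ being smooth: the definition of rational singularity (here phrased as $p_g=0$ for the local rings at the singular points, equivalently $R^1(\text{resolution})_*\cO=0$) is exactly what is needed, and since rationality of $X_i$ is a condition local on $X_i$ while $\pi_i$ is an isomorphism away from finitely many points, the higher direct image is a finite-length sheaf supported there and vanishes by the local statement. A minor auxiliary point is to confirm that the common dominating resolution $X$ can be chosen so that $\pi_i$ factors appropriately; this is standard (resolve the fiber product, or resolve $X_1\times_{\Spec A}X_2$), and no delicate estimates are involved. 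Once these formal facts are in place the proof is essentially a two-line spectral sequence comparison.
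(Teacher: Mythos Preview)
Your proposal is correct and follows essentially the same approach as the paper: pass to a common dominating resolution $X$, identify the two pullback cycles via the invertible sheaf $I\cO_X$, and then use the Leray spectral sequence together with the projection formula and the vanishing $R^1\pi_{i*}\cO_X=0$ coming from rationality of the singularities of $X_i$ to conclude $h^1(\cO_{X_i}(-Z_i))=h^1(\cO_X(-W))$ for each $i$. The paper's argument is the same in substance and structure; your additional remarks on why $R^1\pi_{i*}\cO_X=0$ is a local statement at the singular points and on constructing the common roof are welcome but not points of divergence.
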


\begin{proof}
Take a resolution $f_3\:X_3\to \spec A$ which factors through
$f_1$ and $f_2$ as follows:
\[
\begin{array}{ccc}
X_3 & \stackrel{\phi_2}{\longrightarrow} & X_2 ~ \\
{}_{\phi_1} \!\downarrow ~ & & \downarrow {}_{f_2} \\
X_1 & \stackrel{f_1}{\longrightarrow} & \Spec A
\end{array}
\]
Then $\phi_i$ are resolution of singularities of $X_i$,
and $\phi_1^*Z_1=\phi_2^*Z_2$ because they are determined by the
invertible sheaf $I\cO_{X_3}$.
Let $Z_3=\phi_1^*Z_1$.
From the Leray spectral sequence, we obtain the following exact sequence:
\[
0\to H^1(\phi_{i*}\cO_{X_3}(-Z_3))
\to H^1(\cO_{X_3}(-Z_3))
\to H^0(R^1\phi_{i*}\cO_{X_3}(-Z_3)).
\]
By projection formula,
$R^j\phi_{i*}\cO_{X_3}(-Z_3)=\cO_{X_i}(-Z_i)
\otimes R^j\phi_{i*}\cO_{X_3}$.
Since $X_i$ has only rational singularities,
we have $R^1\phi_{i*}\cO_{X_3}(-Z_3)=0$ and
$\phi_{i*}\cO_{X_3}(-Z_3)=\cO_{X_i}(-Z_i)$.
Thus we obtain
that $h^1(\cO_{X_i}(-Z_i))=h^1(\cO_{X_3}(-Z_3))$ for $i=1,2$.
\end{proof}

\par
Any $p_g$-ideal is an integrally closed $\m$-primary ideal
by definition. Indeed, all powers of $p_g$-ideals is
$p_g$-ideals and thus integrally closed.

\begin{thm} 
\label{t:sg} 
Assume that $Z$ is a $p_g$-cycle on the resolution $X$. 
Then for any cycle $Z'$ on $X$ such that $\cO_X(-Z')$ is generated, 
$\varepsilon( Z, Z') =0$. In particular, 
$Z'$ is a $p_g$-cycle if and only if
so is $Z+Z'$. 
\par
When this is the case, 
if $f \in I_Z$, $f' \in I_{Z'}$ are general
elements, then
\[
I_{Z+Z'}=fI_{Z'}+f'I_Z.
\]
\end{thm}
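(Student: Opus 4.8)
The plan is to read everything off Proposition~\ref{vareps}, which expresses $\varepsilon(Z,Z')$ in terms of the three quantities $h^1(\cO_X(-Z))$, $h^1(\cO_X(-Z'))$ and $h^1(\cO_X(-Z-Z'))$. Since $Z$ is a $p_g$-cycle, $h^1(\cO_X(-Z))=p_g(A)$, so the formula in part~(2) of Proposition~\ref{vareps} reads $\varepsilon(Z,Z')=h^1(\cO_X(-Z-Z'))-h^1(\cO_X(-Z'))$. On the other hand part~(1) of that proposition gives $\varepsilon(Z,Z')\ge 0$, hence $h^1(\cO_X(-Z-Z'))\ge h^1(\cO_X(-Z'))$; and Proposition~\ref{Ineq-h1}, applied with the anti-nef cycle $Z$ (which has no fixed component because $\cO_X(-Z)$ is generated), gives the reverse inequality $h^1(\cO_X(-Z'))\ge h^1(\cO_X(-Z-Z'))$. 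Therefore the two $h^1$'s coincide and $\varepsilon(Z,Z')=0$, which is the first assertion. Here I should note that Proposition~\ref{vareps} and Proposition~\ref{Ineq-h1} require the relevant cycles to be anti-nef and $\cO_X(-Z)$, $\cO_X(-Z')$ to be generated; $Z$ is a $p_g$-cycle so both conditions hold for it, and $\cO_X(-Z')$ is generated by hypothesis, so $Z'$ is anti-nef as well.

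For the "in particular" clause, apply Kato's Riemann--Roch (Theorem~\ref{t:kato}) or simply rearrange Theorem~\ref{t:lepg}(1): the point is that $\cO_X(-Z-Z')$ is generated (it is a product of two generated invertible sheaves, or equivalently $I_{Z+Z'}\supseteq I_Z I_{Z'}$ and one checks no fixed component), and we have just shown $h^1(\cO_X(-Z-Z'))=h^1(\cO_X(-Z'))$. Thus $h^1(\cO_X(-Z-Z'))=p_g(A)$ if and only if $h^1(\cO_X(-Z'))=p_g(A)$; combined with the fact that $\cO_X(-Z')$ generated and $h^1=p_g(A)$ together say exactly "$Z'$ is a $p_g$-cycle" (Definition~\ref{pgcycle}), and likewise for $Z+Z'$, we get the equivalence. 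A small wrinkle: one must confirm $\cO_X(-Z-Z')$ is genuinely generated, not merely fixed-component-free; this follows because $I_Z$ and $I_{Z'}$ are generated, so their product generates $\cO_X(-Z-Z')$ away from no component, and then Theorem~\ref{t:lepg}(2) (using $h^1=p_g(A)$) upgrades "no fixed component" to "generated" in the $p_g$-cycle direction, while in the other direction generatedness of $Z+Z'$ is part of the hypothesis.

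Finally, the displayed equality $I_{Z+Z'}=fI_{Z'}+f'I_Z$ is just the statement $\varepsilon(Z,Z')=0$ unwound through the definition $\varepsilon(Z,Z')=\ell_A\bigl(I_{Z+Z'}/(fI_{Z'}+f'I_Z)\bigr)$ from Proposition~\ref{vareps}: a module of length zero is zero, and since $fI_{Z'}+f'I_Z\subseteq I_{Z+Z'}$ always, equality follows. So the whole argument is a three-line deduction from the two propositions in Section~2 together with Theorem~\ref{t:lepg}; the only thing requiring care—and the step I would flag as the main (minor) obstacle—is making sure the hypotheses "anti-nef" and "$\cO_X(-\cdot)$ generated (or at least fixed-component-free)" are verified for each cycle at each invocation, in particular for $Z+Z'$, so that Propositions~\ref{Ineq-h1} and~\ref{vareps} and Theorem~\ref{t:lepg} all apply cleanly.
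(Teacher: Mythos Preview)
Your argument is correct and matches the paper's proof essentially line for line: both substitute $h^1(\cO_X(-Z))=p_g(A)$ into the formula of Proposition~\ref{vareps}(2), then squeeze $\varepsilon(Z,Z')$ between $0$ (Proposition~\ref{vareps}(1)) and the nonpositive quantity $h^1(\cO_X(-Z-Z'))-h^1(\cO_X(-Z'))$ (Proposition~\ref{Ineq-h1}) to force equality, from which the remaining assertions drop out. Your extra care about verifying generatedness of $\cO_X(-Z-Z')$ is harmless but unnecessary: since $\cO_X(-Z)$ and $\cO_X(-Z')$ are both globally generated, their tensor product is too, so no appeal to Theorem~\ref{t:lepg}(2) is needed there.
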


\begin{proof}
Consider
\[
\varepsilon(Z,Z')=p_g(A)-h^1(\cO_X(-Z))-
h^1(\cO_X(-Z'))+h^1(\cO_X(-Z-Z')).
\]
Assume $Z$ is a $p_g$-cycle.
Then $\varepsilon(Z,Z')=
-h^1(\cO_X(-Z'))+h^1(\cO_X(-Z-Z')) \le 0$ by
Lemma \ref{Ineq-h1}.
On the other hand, as $\varepsilon(Z,Z') \ge 0$, 
we obtain that $\varepsilon(Z,Z')=0$, that is,
$h^{1}(\cO_X(-Z'))=h^1(\cO_X(-Z-Z'))$.
Hence $Z'$ is a $p_g$-cycle if and only if $Z+Z'$ is
a $p_g$-cycle.
\end{proof}

\begin{cor}
\label{t:stable} 
Let $Z$ be a $p_g$-cycle on $X$ and $Q$ a minimal reduction of $I:=I_Z$.
Then $I^n$ is integrally closed for all $n \ge 1$, $I^2=IQ$, and
$I\subset Q \colon I$.
\end{cor}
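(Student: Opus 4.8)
The plan is to deduce everything from \thmref{t:sg} together with the results of Section 2. First I would prove that $I^n$ is integrally closed for all $n\ge 1$. Since $\cO_X(-Z)$ is generated, we have $\overline{I^n}=I_{nZ}$, so it suffices to show that $nZ$ is a $p_g$-cycle for every $n$. But $nZ = Z + (n-1)Z$, and $\cO_X(-(n-1)Z)$ is generated (a tensor power of a generated invertible sheaf), so by \thmref{t:sg} (applied inductively, with $Z'=(n-1)Z$) $nZ$ is a $p_g$-cycle as soon as $(n-1)Z$ is; the base case $n=1$ is the hypothesis. Hence $h^1(\cO_X(-nZ))=p_g(A)$ and $I_{nZ}=\overline{I^n}$ is represented by a $p_g$-cycle, so in particular it is integrally closed. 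This also shows $I^n$ is itself a $p_g$-ideal once we know $I^n=I_{nZ}$, which follows from the generation statement (the Rees algebra relations for a generated sheaf give $I^n\cO_X=\cO_X(-nZ)$ and $H^0$ of that is $I_{nZ}$; alternatively $\overline{I^n}=I_{nZ}$ and we show below $I^n$ is integrally closed).

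Next I would establish $I^2=IQ$. Write $Q=(f_1,f_2)$ where $f_1,f_2\in I_Z=I$ are general elements; this is a minimal reduction of $I$ by the last sentence of \proref{vareps}. Applying \thmref{t:sg} with $Z'=Z$ (valid since $\cO_X(-Z)$ is generated) gives $\varepsilon(Z,Z)=0$ and, by the displayed formula in that theorem, $I_{2Z}=f_1 I_Z + f_2 I_Z = QI$. On the other hand $\overline{I^2}=I_{2Z}$ because $\cO_X(-Z)$ is generated. Combining, $I^2\subseteq \overline{I^2}=I_{2Z}=QI\subseteq I^2$, so $I^2=QI$ and, incidentally, $I^2=\overline{I^2}$ is integrally closed (consistent with the previous paragraph). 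Finally, $I\subseteq Q\colon I$ is then automatic: it is exactly the observation recorded just after \defref{good-Ulrich}, namely that stability $I^2=QI$ forces $I^2\subseteq Q$, hence $I\cdot I\subseteq Q$, i.e. $I\subseteq Q\colon I$.

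The only real subtlety is the interplay between $\overline{I^n}=I_{nZ}$ and $I^n$ being integrally closed: one must be careful that $Z$ being a $p_g$-cycle means $\cO_X(-Z)$ is generated, which is precisely what licenses the formula $\overline{I^n}=I_{nZ}$ recorded in the subsection on integrally closed ideals, and also what lets us apply \thmref{t:sg} repeatedly. Everything else is bookkeeping. I expect the main (minor) obstacle to be phrasing the induction on $n$ cleanly so that at each stage both hypotheses of \thmref{t:sg}—that $Z$ is a $p_g$-cycle and that $\cO_X(-nZ)$ is generated—are visibly satisfied; once that is in place the conclusions drop out immediately.
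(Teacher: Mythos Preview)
Your approach is correct and is exactly the paper's: apply \thmref{t:sg} with $Z'=Z$ (the paper's entire proof is that one sentence). One expository wrinkle worth fixing: in your first paragraph the claim ``it suffices to show that $nZ$ is a $p_g$-cycle'' is not self-standing, since that alone only gives that $I_{nZ}$ is integrally closed, not that $I^n=I_{nZ}$; and your parenthetical ``Rees algebra'' alternative, reading $I^n$ as $H^0(I^n\cO_X)$, is not valid in general. The clean way is the one you already carry out for $n=2$: use the displayed identity $I_{Z+Z'}=fI_{Z'}+f'I_Z$ from \thmref{t:sg} inductively to get $I_{nZ}=fI_{(n-1)Z}+f'I_Z\subseteq I\cdot I^{n-1}=I^n\subseteq \overline{I^n}=I_{nZ}$, which simultaneously shows $I^n$ is integrally closed and equals $I_{nZ}$.
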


\begin{proof}
We can apply the previous theorem as $Z_1=Z_2=Z$.
\end{proof}

\begin{rem} In our upcoming paper, we will prove that for an $\frm$ primary ideal $I$ in a 
two-dimensional normal local ring $A$, the Rees algebra $\cR(I) = \oplus_{n\ge 0} I^n t^n$ is 
normal and Cohen-Macaulay if and only if $I$ is a 
$p_g$-ideal.  
\end{rem} 

In the rest of this section, 
we give a characterization of $p_g$-cycles.
\par
For any cycle $D>0$ on $X$, the restriction
$\cO_X\to \cO_{D}$ implies the surjection 
$H^1(\cO_X)\to H^1(\cO_{D})$.
Thus $h^1(\cO_{D})\le p_g(A)$.

\begin{thm}[{Reid \cite[\S 4.8]{chap}}]
\label{t:cohom} 
Assume that $p_g(A)>0$.
There exists a smallest cycle $C_X>0$ on $X$ such that
$h^1(\cO_{C_X})=p_g(A)$.
If $A$ is Gorenstein and $f$ is minimal, then $C_X=Z_{K_X}$.
\end{thm}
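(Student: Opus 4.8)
The statement to prove is \thmref{t:cohom}: the existence of a smallest cycle $C_X > 0$ with $h^1(\cO_{C_X}) = p_g(A)$, and its identification with $Z_{K_X}$ in the Gorenstein minimal case.

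\medskip

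\textbf{Proof plan.} The plan is to establish existence of a \emph{smallest} such cycle by showing that the property ``$h^1(\cO_D) = p_g(A)$'' is closed under taking the cycle-theoretic minimum (infimum) of two cycles; since there is at least one cycle with the property (namely any sufficiently large cycle, e.g.\ a large multiple of $E$, for which $\cO_X \to \cO_D$ has cokernel whose $H^1$ eventually stabilizes to $H^1(\cO_X)$ by the theorem on formal functions / the fact that $p_g(A) = h^1(\cO_X) = \varprojlim h^1(\cO_{nE})$), the minimum over all of them will be the desired $C_X$. First I would fix two cycles $D_1, D_2 > 0$ with $h^1(\cO_{D_i}) = p_g(A)$ and set $D = \min(D_1, D_2)$, $D' = \max(D_1, D_2) = D_1 + D_2 - D$. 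There is a standard Mayer--Vietoris-type exact sequence
\[
0 \to \cO_{D'} \to \cO_{D_1} \oplus \cO_{D_2} \to \cO_{D} \to 0,
\]
obtained from the ideal-sheaf identities $\cI_{D'} = \cI_{D_1} \cap \cI_{D_2}$ and $\cI_{D_1} + \cI_{D_2} \subseteq \cI_{D}$ (with equality on the structure-sheaf quotients); taking cohomology and using that each $H^1(\cO_X) \to H^1(\cO_{D_i})$ is surjective (as recalled just before the theorem) together with $h^1(\cO_{D_i}) = p_g(A) = h^1(\cO_X)$, I would deduce $h^1(\cO_{D'}) \ge 2p_g(A) - h^1(\cO_D) \ge 2p_g(A) - p_g(A) = p_g(A)$, hence $h^1(\cO_{D'}) = p_g(A)$ and then, chasing back, $h^1(\cO_D) = p_g(A)$ as well. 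Thus the set of cycles with the property is closed under $\min$; as it is a nonempty subset of the (discrete) lattice of effective cycles bounded below by $E$, it has a unique minimal element $C_X$.

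\medskip

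For the second assertion, assume $A$ is Gorenstein and $f$ minimal, so that $K_X \sim -Z_{K_X}$ with $Z_{K_X} \ge 0$ an honest (integral, since Gorenstein) cycle. I would show $h^1(\cO_{Z_{K_X}}) = p_g(A)$ and that no smaller cycle works. For the first, use the exact sequence $0 \to \cO_X(-Z_{K_X}) \to \cO_X \to \cO_{Z_{K_X}} \to 0$ and the identification $\cO_X(-Z_{K_X}) \cong \cO_X(K_X)$: Grauert--Riemenschneider (\thmref{t:Lv} with $D = 0$) gives $H^1(\cO_X(K_X)) = 0$, so $H^1(\cO_X) \twoheadrightarrow H^1(\cO_{Z_{K_X}})$ is in fact an isomorphism, whence $h^1(\cO_{Z_{K_X}}) = p_g(A)$. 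For minimality, suppose $0 < D < Z_{K_X}$; then $0 < Z_{K_X} - D$ and Serre duality on the (Gorenstein) curve-like scheme, or rather on $X$ via the sequence $0 \to \cO_X(K_X) \to \cO_X(K_X + D) \to \cO_D(K_X + D) \to 0$ combined with $H^1(\cO_X(K_X + D)) = 0$ (nef-ness of $D$ would be needed here, which may fail), shows $h^0(\cO_D(K_X+D))$ is too small — more robustly, I would argue $h^1(\cO_D) < p_g(A)$ directly from the long exact sequence of $0 \to \cO_X(-D) \to \cO_X \to \cO_D \to 0$ and the \emph{strict} inequality $h^1(\cO_X(-D)) > 0$, which holds because $\cO_X(-D) \not\cong \cO_X(K_X)$ forces, by Serre duality $h^1(\cO_X(-D)) = h^1(\cO_X(K_X + D)) $ together with the minimality of $f$ and $Z_{K_X}$ being the smallest anti-canonical cycle, a nonvanishing. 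Then counting in the long exact sequence gives $h^1(\cO_D) = p_g(A) - h^1(\cO_X(-D)) < p_g(A)$.

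\medskip

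\textbf{Main obstacle.} The delicate point is the minimality half of the Gorenstein statement: showing that every cycle strictly smaller than $Z_{K_X}$ fails the condition. The clean input is ``$h^1(\cO_X(-D)) \ne 0$ whenever $0 < D < Z_{K_X}$,'' which by Serre duality is the statement that $H^1(\cO_X(K_X + D)) \ne 0$, and this is exactly where one needs the minimality of the resolution and the characterization of $Z_{K_X}$ as the fundamental-type cycle attached to $K_X$; the relevant nonvanishing is essentially Reid's/Laufer's analysis and I would cite or reproduce it rather than redo it. A secondary nuisance is being careful that the Mayer--Vietoris sequence above is exact at the level of structure sheaves of cycles (it is, because for effective cycles on a surface $\cI_{D_1} \cap \cI_{D_2} = \cI_{\max(D_1,D_2)}$ and $\cI_{D_1} + \cI_{D_2} = \cI_{\min(D_1,D_2)}$ hold as ideal sheaves — a local computation component by component of $E$), but this is routine. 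Everything else is a straightforward cohomology chase using \thmref{t:Lv} and the surjectivity $H^1(\cO_X) \twoheadrightarrow H^1(\cO_D)$.
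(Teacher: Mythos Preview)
The paper does not supply its own proof of this theorem; it is simply cited from Reid \cite[\S 4.8]{chap}. So there is nothing in the paper to compare your argument against, and your proposal stands or falls on its own.

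Your existence argument is essentially correct and is the standard one. One small correction: the equality $\cI_{D_1}+\cI_{D_2}=\cI_{\min(D_1,D_2)}$ fails at points where two components of $E$ meet (for instance $(s)+(t)=\m\ne (1)$), so the third term of your Mayer--Vietoris sequence is really $\cO_X/(\cI_{D_1}+\cI_{D_2})$, not $\cO_D$. This does not matter for the $H^1$ count, since the kernel of the surjection $\cO_X/(\cI_{D_1}+\cI_{D_2})\twoheadrightarrow \cO_D$ is supported on finitely many points and hence has vanishing $H^1$; but your parenthetical justification of the sequence is not right as stated.

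The genuine gap is in the minimality half of the Gorenstein assertion. Two problems:
\begin{itemize}
\item The equality ``$h^1(\cO_X(-D))=h^1(\cO_X(K_X+D))$ by Serre duality'' is false: $X$ is not proper, so ordinary Serre duality does not apply. The correct duality on a resolution pairs $H^1(X,\cL)$ with $H^1_E(X,\omega_X\otimes\cL^{-1})$, which is a different group.
\item Even if you knew $h^1(\cO_X(-D))>0$, your conclusion ``$h^1(\cO_D)=p_g(A)-h^1(\cO_X(-D))$'' does not follow from the long exact sequence of $0\to\cO_X(-D)\to\cO_X\to\cO_D\to 0$. What that sequence gives is
\[
h^1(\cO_D)=p_g(A)-\dim\operatorname{Im}\bigl(H^1(\cO_X(-D))\to H^1(\cO_X)\bigr),
\]
so you must show this map is \emph{nonzero}, not merely that its source is nonzero; equivalently, you must show $H^0(\cO_X)\to H^0(\cO_D)$ is surjective, and you have not argued this.
\end{itemize}
A clean route to minimality that avoids these issues is to dualize on the one-dimensional scheme $D$ rather than on $X$: by adjunction $\omega_D\cong\cO_D(K_X+D)$, so $h^1(\cO_D)=h^0(\cO_D(K_X+D))$, and in the Gorenstein minimal case $K_X\sim -Z_{K_X}$ turns this into $h^0(\cO_D(D-Z_{K_X}))$. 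One then compares this with $h^0(\cO_{Z_{K_X}})$ by peeling off one component $E_i$ at a time from $Z_{K_X}$ and tracking when a section is lost; this is essentially Reid's computation, and it uses the minimality of $f$ (no $(-1)$-curves) in an essential way. Your sketch gestures at this but does not carry it out, and the substitute argument you wrote down is not valid.
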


The cycle $C_X$ is called the {\em cohomological cycle}
on $X$.

\begin{defn} 
\label{perp} 
For any cycle $D$ on $X$, let $D^{\bot}=\sum_{DE_i=0}E_i$.
\end{defn}

\begin{prop}
\label{p:CX} 
Assume that $p_g(A)>0$.
Let $Z>0$ be a cycle such that $\cO_X(-Z)$ has no fixed component.
Then $Z$ is a $p_g$-cycle if and only if
$\cO_{C_X}(-Z)\cong \cO_{C_X}$.
\end{prop}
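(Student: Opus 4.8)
The plan rests on the short exact sequence obtained by restricting $\cO_X(-Z)$ to the cycle $C_X$,
\[
0\to \cO_X(-Z-C_X)\to \cO_X(-Z)\to \cO_{C_X}(-Z)\to 0,
\]
together with the vanishing $H^i(X,\cF)=0$ for $i\ge 2$ (the fibres of $f$ are at most one–dimensional), which makes the long exact sequence produce a surjection $H^1(\cO_X(-Z))\twoheadrightarrow H^1(\cO_{C_X}(-Z))$. For the implication ``$\Leftarrow$'' this is all one needs: if $\cO_{C_X}(-Z)\cong\cO_{C_X}$, then $h^1(\cO_{C_X}(-Z))=h^1(\cO_{C_X})=p_g(A)$ by \thmref{t:cohom}, so $h^1(\cO_X(-Z))\ge p_g(A)$; with \thmref{t:lepg}(1) this forces $h^1(\cO_X(-Z))=p_g(A)$, and then \thmref{t:lepg}(2) gives that $\cO_X(-Z)$ is generated, so $Z$ is a $p_g$-cycle.

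For ``$\Rightarrow$'' the plan is to reduce to the numerical identity $Z\cdot C_X=0$. Assume $Z$ is a $p_g$-cycle, so $\cO_X(-Z)$ is generated; then $-Z\cdot E_i=\cO_X(-Z)\cdot E_i\ge 0$ for all $i$ (so $Z$ is anti-nef), and $\cO_{C_X}(-Z)=\cO_X(-Z)|_{C_X}$ is a globally generated line bundle on the connected projective scheme $C_X$, of degree $-Z\cdot E_i\ge 0$ on each component $E_i\subseteq\Supp C_X$. Granting $Z\cdot C_X=0$, writing $C_X=\sum c_iE_i$ with $c_i\ge 1$ on the support, the relation $0=-Z\cdot C_X=\sum c_i(-Z\cdot E_i)$ forces $-Z\cdot E_i=0$ for every $E_i\subseteq\Supp C_X$; and a globally generated line bundle of degree $0$ on every component of a connected projective scheme is trivial (the global sections define a morphism to some $\PP^N$ contracting each reduced component, hence constant, so the pullback of a hyperplane missing the image point is a nowhere–vanishing section). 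Applying this to $\cO_{C_X}(-Z)$ gives $\cO_{C_X}(-Z)\cong\cO_{C_X}$.

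It remains to prove $Z\cdot C_X=0$, which is the main obstacle. First, $h^1(\cO_D)=p_g(A)$ iff $D\ge C_X$: for $D\ge C_X$, restricting $\cO_D$ to $C_X$ gives $H^1(\cO_D)\twoheadrightarrow H^1(\cO_{C_X})$, so $h^1(\cO_D)\ge p_g(A)$, while $h^1(\cO_D)\le p_g(A)$ always; the converse is the minimality in \thmref{t:cohom}. So it suffices to exhibit a cycle $D$ with $Z\cdot D=0$ and $h^1(\cO_D)=p_g(A)$, for then $Z\cdot E_i=0$ on $\Supp D\supseteq\Supp C_X$. Since $\cO_X(-Z)$ is generated it defines a birational morphism $g\colon X\to Y=\Proj\bigl(\bigoplus_n H^0(\cO_X(-nZ))\bigr)$ contracting exactly the curves with $Z\cdot E_i=0$, i.e.\ $\Supp Z^{\bot}$; here $Y$ is normal (because $\bigoplus_n H^0(\cO_X(-nZ))=\bigoplus_n\overline{I_Z^{\,n}}$ is integrally closed), $g_*\cO_X=\cO_Y$, and $\cO_X(-Z)=g^{*}M$ for a relatively ample, globally generated line bundle $M$ on $Y$. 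The Leray sequence for $g$ and the projection formula then give
\[
p_g(A)=h^1(\cO_X(-Z))=h^1(Y,M)+\ell_A(R^1g_*\cO_X),\qquad p_g(A)=h^1(\cO_X)=h^1(Y,\cO_Y)+\ell_A(R^1g_*\cO_X),
\]
so $h^1(Y,M)=h^1(Y,\cO_Y)$, and what must be shown is that this forces $h^1(Y,\cO_Y)=0$, i.e.\ $\ell_A(R^1g_*\cO_X)=p_g(A)$.

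This last vanishing is the delicate point. One natural route is induction on the number of exceptional curves of $f$: if $g$ contracts nothing then $\cO_X(-Z)$ is relatively ample on $X$, so $h^1(\cO_X(-nZ))=0$ for $n\gg0$ by Serre vanishing, which is absurd since every $nZ$ is a $p_g$-cycle and $p_g(A)>0$; otherwise $Y$ has strictly fewer exceptional curves, and $\pi^{*}M$ (for a resolution $\pi$ of $Y$, or $M$ itself if $Y$ is already smooth) is again a generated, anti-nef line bundle whose $h^1$ equals the geometric genus, so the inductive hypothesis applies to it, and transferring the resulting triviality on the cohomological cycle of $Y$ back to $X$ (components of $C_X$ contracted by $g$ because $\cO_X(-Z)$ is trivial there, the rest because they lie over the cohomological cycle of $Y$) yields $Z\cdot E_i=0$ for all $E_i\subseteq\Supp C_X$; the comparison of $C_X$ with the cohomological cycle of $Y$ through $g$, and the precise form of the inductive statement when $Y$ is singular, are where the real work lies. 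Equivalently, $h^1(Y,\cO_Y)=0$ is the statement that the normal blow-up of $\Spec A$ along the $p_g$-ideal $I_Z$ has no first cohomology, which one can also read off from the Cohen--Macaulayness of $\cR(I_Z)$ (cf.\ the Remark above). Once $\ell_A(R^1g_*\cO_X)=p_g(A)$ is known, formal functions identify this length with $\sup_n h^1(\cO_{nZ^{\bot}})$, so $D=nZ^{\bot}$ for $n\gg 0$ satisfies $h^1(\cO_D)=p_g(A)$ and $Z\cdot D=0$, finishing the argument; everything else is bookkeeping with the exact sequences above.
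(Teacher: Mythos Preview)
Your ``$\Leftarrow$'' direction is correct and matches the paper. For ``$\Rightarrow$'' you also correctly reduce to exhibiting a cycle $D$ supported on $Z^{\bot}$ with $h^1(\cO_D)=p_g(A)$ (whence $D\ge C_X$ by minimality of $C_X$, so $\Supp C_X\subseteq\Supp Z^{\bot}$, and the globally generated degree--zero bundle $\cO_{C_X}(-Z)$ is trivial). This is exactly the paper's target as well.

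The gap is in how you produce such a $D$. Your detour through the contraction $g\colon X\to Y$ and the Leray sequence reduces the problem to the vanishing $h^1(Y,\cO_Y)=0$, but you do not establish this: the inductive route you sketch founders (as you yourself concede) on comparing $C_X$ with the cohomological cycle of a possibly singular $Y$, and the alternative appeal to Cohen--Macaulayness of $\cR(I_Z)$ invokes a result only announced in the paper, not proved there. So as written, ``$\Rightarrow$'' is incomplete.

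The paper's argument stays entirely on $X$ and is much shorter. First, every $mZ$ is again a $p_g$-cycle by \thmref{t:sg}. If $Z^{\bot}=0$, then $-mZ-K_X$ is nef for $m\gg 0$ and \thmref{t:Lv} forces $h^1(\cO_X(-mZ))=0$, contradicting $p_g(A)>0$. Otherwise choose any $D>0$ supported on $Z^{\bot}$ with $DE_i<0$ for all $E_i\le Z^{\bot}$; then $mZ+D$ is strictly anti-nef on all of $E$ for $m\gg 0$, so $-n(mZ+D)-K_X$ is nef for $n\gg 0$ and \thmref{t:Lv} gives $H^1(\cO_X(-nmZ-nD))=0$. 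The restriction sequence
\[
0\to \cO_X(-nmZ-nD)\to \cO_X(-nmZ)\to \cO_{nD}(-nmZ)\to 0
\]
then yields $h^1(\cO_{nD}(-nmZ))=h^1(\cO_X(-nmZ))=p_g(A)$. Since $\cO_X(-Z)$ is generated and numerically trivial on $\Supp D\subseteq Z^{\bot}$, one has $\cO_{nD}(-nmZ)\cong\cO_{nD}$, hence $h^1(\cO_{nD})=p_g(A)$, and your own finishing argument applies with $D$ replaced by $nD$. No passage to $Y$, no induction, no Rees algebra --- just the vanishing theorem on $X$.
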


\begin{proof}
If $\cO_{C_X}(-Z)\cong \cO_{C_X}$,
then $h^1(\cO_{X}(-Z)) \ge h^1(\cO_{C_X})=p_g(A)$.
By \thmref{t:lepg}, $h^1(\cO_{X}(-Z))=p_g(A)$
and $\cO_X(-Z)$ is generated.
\par
Conversely, assume that $h^1(\cO_{X}(-Z))=p_g(A)$.
Then $h^1(\cO_{X}(-mZ))=p_g(A)$ and $\cO_{X}(-mZ)$ is generated for
every $m \in \N$ by \thmref{t:sg}. .
If $Z^{\bot}=0$, then it follows from \thmref{t:Lv} that
$H^1(\cO_X(-mZ))=0$ for sufficiently large
$m\in \N$; it contradicts that $p_g(A)>0$.
Let $D>0$ be a cycle supported on $Z^{\bot}$ and
$DE_i<0$ for all $E_i\le Z^{\bot}$.
There exist $m\in \N$ such that $(mZ+D)E_i<0$ for every $E_i\subset E$.
By \thmref{t:Lv} again, $H^1(\cO_X(-nmZ-nD))=0$ for some $n\in \N$.
Then $p_g(A)=h^1(\cO_X(-nmZ))=h^1(\cO_{nD}(-nmZ))$.
Since $\cO_X(-Z)$ is generated and $Z\equiv 0$ on $D$,
we have $\cO_{nD}(-Z)\cong \cO_{nD}$.
It follows that
$h^1( \cO_{nD})= h^1(\cO_{nD}(-nmZ))=p_g(A)$.
By the definition of $C_X$, we have $nD\ge C_X$.
Hence $\cO_{C_X}(-Z)\cong \cO_{C_X}$.
\end{proof}

It follows from \thmref{t:cohom} and \proref{p:CX} that
if $A$ is Gorenstein, $p_g(A)>0$, and $f\: X\to \spec A$ is minimal,
then there exist no $p_g$-cycles on $X$.
Therefore, in general, $p_g$-ideals are represented on non-minimal
resolutions.
In the next proposition, we discuss the minimality of representation.

\begin{prop} 
\label{minrep} 
Let $I$ be a $p_g$-ideal represented by a cycle
$Z$ on $X$.
Then there exist the minimum $X_1\to \spec A$ of the resolutions on
which $I$ is represented and a natural morphism $X\to X_1$.
We call $X_1$ the
{\em minimal resolution with respect to $I$}.
The resolution $X$ is the minimum with respect to $I$
if and only if $ZC<0$ for every $(-1)$-curve $C$ on $X$.
\end{prop}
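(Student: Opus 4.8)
The plan is to prove the existence of a unique minimal resolution $X_1$ with respect to $I$ by contracting $(-1)$-curves one at a time, checking at each stage that $I$ remains represented. First I would recall that contracting a single $(-1)$-curve $C$ on $X$ gives a morphism $g\colon X\to X'$ with $X'$ smooth, and that $I$ is represented on $X'$ precisely when $g^*(\text{anti-nef cycle on }X')=Z$, equivalently when the coefficient structure of $Z$ is pulled back from $X'$. The key numerical observation is that $g^*$ of a cycle $Z'$ on $X'$ is obtained by adding to $Z'$ (viewed on $X$) a suitable multiple of $C$ so that the total intersection with $C$ is zero; hence $Z$ comes from $X'$ if and only if $Z$ is already the pullback of $Z-(ZC)C$, which by a standard computation happens exactly when $ZC\le 0$ — but since $\cO_X(-Z)$ is generated (Theorem \ref{t:lepg}(2) / Definition \ref{pgcycle}) we have $ZC\ge 0$ automatically for the strict transform consideration... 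I should be careful: what we actually need is that $Z$ is numerically (hence, on a smooth surface over $\Spec A$, linearly) a pullback iff $ZC=0$, and that $\cO_{X'}(-Z')$ is then still generated with the same $H^0$ and same $h^1$, so $I$ is still a $p_g$-ideal represented by $Z'$ on $X'$. The condition $ZC<0$ for a $(-1)$-curve $C$ therefore obstructs contraction, while $ZC=0$ permits it; and because $\cO_X(-Z)$ is generated, $ZC\ge 0$ always, so the dichotomy is exactly $ZC=0$ versus $ZC>0$.

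Next I would run the descent: if there is a $(-1)$-curve $C$ on $X$ with $ZC=0$, contract it to get $g\colon X\to X'$; then $Z=g^*Z'$ for the anti-nef cycle $Z'$ on $X'$ with $I\cO_{X'}=\cO_{X'}(-Z')$, and $I$ is represented by $Z'$ on $X'$, with $\cO_{X'}(-Z')$ generated (generation descends along the proper birational $g$ since $g_*\cO_X(-Z)=\cO_{X'}(-Z')$ and $R^1g_*\cO_X(-Z')\text{-type}$ terms vanish by rationality of the contracted configuration) and $h^1$ unchanged by \lemref{l:h^1} applied with $X'$ in the role of a partial resolution having only rational singularities (it is in fact smooth). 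Since the Picard number strictly drops at each step, this terminates at a resolution $X_1$ on which no $(-1)$-curve $C$ satisfies $ZC=0$; equivalently $ZC<0$ is impossible and $ZC>0$ for every $(-1)$-curve, which is one direction of the final claimed equivalence. Running the same argument starting from any resolution $Y$ on which $I$ is represented shows $Y$ dominates some such minimal model; a routine uniqueness argument (two resolutions on which $I$ is represented are dominated by a common one, and minimal contractions are confluent because the relevant $(-1)$-curves are intrinsic to the numerical class of $Z$) identifies this minimal model with a single $X_1$ independent of $Y$, and gives the natural morphism $X\to X_1$.

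For the final sentence, "$X$ is minimal with respect to $I$ iff $ZC<0$ for every $(-1)$-curve $C$ on $X$": the forward direction is exactly the termination condition above (no contractible $(-1)$-curve remains, and since generation forces $ZC\ge0$ the only remaining possibility is $ZC>0$, but the statement writes $ZC<0$ — here I must match the paper's sign convention, noting that anti-nef cycles satisfy $ZE_i\le 0$ for $E_i\subset E$, so in fact $ZC\le 0$ always and generation of $\cO_X(-Z)$ is what I should re-examine, since $-Z\cdot C\ge 0$ means $ZC\le 0$; then $ZC=0$ is the contractible case and $ZC<0$ the obstructed case, matching the statement). The reverse direction: if $ZC<0$ for every $(-1)$-curve $C$, then no contraction as above is possible while preserving the representation of $I$, so $X$ cannot dominate a smaller resolution with $I$ represented, i.e.\ $X$ is the minimal one.

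\textbf{Main obstacle.} The delicate point I expect to be the crux is verifying that generation of $\cO_X(-Z)$ descends to $\cO_{X'}(-Z')$ and that $h^1$ is preserved under the single $(-1)$-contraction — i.e.\ that $X'$ genuinely realizes $I$ as a $p_g$-ideal. This is where \lemref{l:h^1} and the projection-formula/rationality argument inside its proof must be invoked cleanly (the contracted $X'$ is smooth, so trivially has only rational singularities, and $g_*\cO_X(-Z)=\cO_{X'}(-Z')$, $R^1g_*\cO_X(-Z)=\cO_{X'}(-Z')\otimes R^1g_*\cO_X=0$), together with the fact that a globally generated invertible sheaf pushes forward to a globally generated one along a morphism that is an isomorphism away from the contracted curve and on which the higher direct image vanishes. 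The confluence/uniqueness of the minimal model — that the set of "bad" $(-1)$-curves is determined by $Z$ and contractions do not interfere — is the other point needing care, but it follows from the standard theory of relatively minimal models of surfaces over a base once one observes the contractible curves are precisely those with $ZC=0$.
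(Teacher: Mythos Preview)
Your approach is workable but takes a different and heavier route than the paper. The paper does not iterate single $(-1)$-contractions at all; instead it introduces the normalized blow-up $X_0\to\Spec A$ of the ideal $I$. Since on any resolution $Y$ with $I$ represented the sheaf $I\cO_Y$ is invertible, the universal property of blow-ups gives a unique morphism $Y\to X_0$. The minimal resolution $X_1$ of $X_0$ is then automatically the minimum of all resolutions on which $I$ is represented, and the existence of the natural morphism $X\to X_1$ comes for free. This disposes of your ``confluence/uniqueness'' step---which you label routine but which actually requires some care in your framework---in one line. For the final equivalence the paper simply observes that $X_0$ is obtained from $X$ by contracting exactly the $E_i$ with $ZE_i=0$, so $X\ne X_1$ forces a $(-1)$-curve $C$ in the exceptional set of $X\to X_1$ with $\cO_X(-Z)|_C$ trivial, i.e.\ $ZC=0$.

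Two smaller remarks. First, you spend effort showing that the $p_g$-property (generation and $h^1=p_g$) descends along the contraction; but the proposition only asks that $I$ be \emph{represented} on $X'$, which needs just $I\cO_{X'}$ invertible and $H^0(I\cO_{X'})=I$. That the $p_g$-property is preserved is already guaranteed a posteriori by \lemref{l:h^1}, so you can drop that discussion. Second, your sign confusion in the middle of the argument---temporarily thinking $ZC\ge 0$ from generation---is eventually self-corrected, but in a write-up you should state from the outset that $Z$ is anti-nef, so $ZC\le 0$ for every exceptional component $C$, and the dichotomy is simply $ZC=0$ (contractible while keeping $I$ represented) versus $ZC<0$ (not).
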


\begin{proof}
Let $X_0\to \spec A$ be a partial resolution obtained by normalizing the
blowing-up by the ideal $I$.
Since $I\cO_X=\cO_X(-Z)$, $X_0$ is also obtained by contracting all
curves $E_i\subset E$ with $ZE_i=0$;
let $\psi\:X\to X_0$ denote the contraction.
If $I$ is represented on a resolution $X'\to \spec A$,
then $I\cO_{X'}$ is invertible and thus there exists a unique morphism
$X'\to X_0$ by universal property of blowing-ups.
Hence the minimal resolution with respect to $I$ is obtained as the
minimal resolution of singularities of $X_0$.
\par
Let $C$ be a $(-1)$-curve on $X$ with $ZC=0$ and
let $X\to X'$ be the contraction of $C$.
Then $X'\to \spec A$ is a resolution, and
$I$ is represented on $X'$ since we have a morphism $X' \to X_0$.
Conversely assume that the natural morphism
$\psi_1\: X\to X_1$ to the minimal resolution
$X_1$ with respect to $I$ is not trivial.
Then the exceptional set of $\psi_1$ contains a $(-1)$-curve $C$,
and the invertible sheaf $\cO_X(-Z)$ is trivial
on $C$, since $\cO_X(-Z)=I\cO_X=\psi_1^*I\cO_{X_1}$.
\end{proof}

\section{Existence of good ideals in two-dimensional normal Gorenstein singularities}

The aim of this section is to prove the following,
which is the main theorem in this paper.

\begin{thm}
\label{t:G} 
Let $(A,\m)$ be a two-dimensional normal local ring. Then$:$
\begin{enumerate}
\item There exists a resolution on which $p_g$-cycles exist.
\item If $A$ is non-regular Gorenstein, then it has a good ideal.
\end{enumerate}
\end{thm}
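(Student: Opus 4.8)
The plan is to prove the two assertions separately, relying heavily on the machinery of $p_g$-cycles developed in Section 3.

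\textbf{Part (1): existence of a resolution carrying $p_g$-cycles.}
First I would dispose of the rational case: if $p_g(A)=0$, then by \exref{Lipman} every anti-nef cycle on every resolution is a $p_g$-cycle, and such cycles certainly exist (e.g. take any resolution $X\to\Spec A$ and the cycle representing $\m\cO_X$ after blowing up so that it becomes invertible and generated). So assume $p_g(A)>0$. The idea is to start with the minimal resolution $X_{\min}$, pick a large enough anti-nef cycle $W$ on it so that $\cO_{X_{\min}}(-W)$ is generated (e.g. $W$ large enough that $W E_i \ll 0$ for all $i$, using Theorem~\ref{t:Lv} to kill $H^1$), and then perform a finite sequence of blowups at well-chosen points of the exceptional set to produce a resolution $X\to X_{\min}$ and a cycle $Z$ on $X$ with $\cO_X(-Z)$ generated and $h^1(\cO_X(-Z))=p_g(A)$. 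Concretely, by \proref{p:CX} it suffices to arrange that $\cO_{C_X}(-Z)\cong\cO_{C_X}$, where $C_X$ is the cohomological cycle. Since blowing up a point of $E$ that is \emph{not} on the support of the cohomological cycle does not change the relevant $H^1$'s, while it does enlarge the freedom in choosing $Z$, the strategy is: choose the point(s) to blow up so that on the resulting $X$ one can find a generated $\cO_X(-Z)$ whose restriction to $C_X$ is trivial. A clean way to make this work is to take an auxiliary divisor realizing a sufficiently ample-on-$E$ class, pull it back, and correct by the new exceptional curve(s) so that the intersection numbers with the components of $C_X$ all vanish; the negative-definiteness of the intersection matrix guarantees the correcting cycle exists, and Theorem~\ref{t:lepg}(2) upgrades "no fixed component" to "generated". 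The main obstacle here is bookkeeping: showing that finitely many blowups suffice and that the constructed $Z$ is simultaneously anti-nef, has no fixed component, and restricts trivially to $C_X$.

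\textbf{Part (2): a non-regular Gorenstein $A$ has a good ideal.}
By part (1) choose a $p_g$-cycle $Z$ on some resolution $X$ and set $I=I_Z$. By \corref{t:stable}, $I$ is stable: $I^2=IQ$ for a minimal reduction $Q=(f_1,f_2)$ with $f_i\in I_Z$ general, and $I\subset Q:I$. It remains to force $Q:I=I$, equivalently (by the Gorenstein duality formula quoted after \defref{good-Ulrich}) $2\,\ell_A(A/I)=e_0(I)$. Now $e_0(I)=-Z^2$ and, since $Z$ is a $p_g$-cycle, Kato's formula (Theorem~\ref{t:kato}) gives $\ell_A(A/I_Z)=-\frac{Z^2+K_XZ}{2}+p_g(A)-h^1(\cO_X(-Z))=-\frac{Z^2+K_XZ}{2}$. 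Hence $2\ell_A(A/I)=-Z^2-K_XZ=e_0(I)-K_XZ$, so $I$ is good if and only if $K_XZ=0$. Because $A$ is Gorenstein, $K_X\sim -Z_{K_X}$, so the condition becomes $Z_{K_X}\cdot Z=0$, i.e. $Z$ is supported away from (or numerically trivial against) the canonical cycle. This is exactly the kind of condition one can \emph{build in} when choosing $Z$ in part (1): redo the construction insisting additionally that $\cO_X(-Z)$ be trivial on the support of $Z_{K_X}$ — but note $Z_{K_X}$ and the cohomological cycle $C_X$ coincide on the minimal resolution in the Gorenstein case (Theorem~\ref{t:cohom}), and $\cO_{C_X}(-Z)\cong\cO_{C_X}$ is \emph{already} forced by $Z$ being a $p_g$-cycle (\proref{p:CX}); one then checks this triviality on $C_X = Z_{K_X}$ propagates to $K_X Z = 0$ via the adjunction/linear-equivalence $K_X\sim -Z_{K_X}$ together with the fact that $\cO_X(-Z)$ is trivial on every component meeting $Z_{K_X}$ with zero intersection. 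So the genuine content is to choose, in part (1), a $p_g$-cycle $Z$ on a Gorenstein $A$ with $K_X Z = 0$; since $Z$ is anti-nef, $K_X Z = -Z_{K_X} Z$ and $Z_{K_X}\ge 0$, the product $Z_{K_X}Z$ is $\ge 0$, and it vanishes precisely when $Z$ is numerically trivial on $\Supp Z_{K_X}$, which is implied by $\cO_{Z_{K_X}}(-Z)\cong\cO_{Z_{K_X}}$. Finally, non-regularity is used to guarantee $I\ne A$, i.e. that a genuine (proper) good ideal exists: if $A$ were regular the only candidate would be degenerate, whereas for non-regular $A$ the cycle $Z$ above is a positive cycle and $I_Z\subsetneq A$ is $\m$-primary.

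\textbf{Expected main difficulty.}
The crux is the construction in part (1), refined for part (2): producing a single resolution $X$ and a single positive cycle $Z$ that is anti-nef, has $\cO_X(-Z)$ generated, satisfies $\cO_{C_X}(-Z)\cong\cO_{C_X}$ (hence is a $p_g$-cycle), and — in the Gorenstein case — additionally has $K_XZ=0$. Each requirement individually is easy (the first via Theorem~\ref{t:Lv}, the second via \proref{p:CX}, the last via the structure of $Z_{K_X}$), but arranging them simultaneously requires choosing which points of $E$ to blow up and then solving a system of intersection equations over the negative-definite lattice; the argument that such a solution exists and can be made effective (a genuine cycle, not merely a $\Q$-divisor, after sufficiently many blowups) is where care is needed.
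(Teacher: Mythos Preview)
Your overall strategy matches the paper's: reduce Part~(2) to the criterion $K_XZ=0$ for a $p_g$-cycle $Z$ (this is exactly \proref{p:Ggood}), and build such a $Z$ into the existence proof of Part~(1). The paper proceeds the same way, via Propositions~\ref{p:Ggood} and~\ref{p:exs}.

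Where your proposal falls short is in the actual construction. You describe it as ``solving a system of intersection equations over the negative-definite lattice'' after some blowups, but this is too vague to be a proof, and the paper's mechanism is rather different and more concrete. The paper first chooses on $X$ a large anti-nef $W$ with $\cO_X(-W)$ generated and $H^1(\cO_X(-W))=0$, picks a general $h\in I_W$ so that $\mathrm{div}_X(h)=W+H_0$ with $H_0$ reduced and normal-crossing with $E$, and then iteratively blows up the finite set $B_i=\supp(C_i)\cap H_i$, where $C_0$ is any cycle with $h^1(\cO_{C_0})=p_g(A)$ and $C_{i+1}=b_{i+1}^*C_i-F_{i+1}$. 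Lemma~\ref{l:coh} is the engine here: it guarantees $h^1(\cO_{C_i})=p_g(A)$ persists through each blowup. After finitely many steps $H_n\cap\supp(C_n)=\emptyset$, and one takes $Z=\mathrm{div}_Y(h)-H_n$; then $\cO_{C_n}(-Z)\cong\cO_{C_n}$ is automatic because $Z$ and $\mathrm{div}_Y(h)$ agree near $C_n$. This trick --- building $Z$ as the exceptional part of a principal divisor and then separating the non-exceptional part from the cohomological cycle by blowups --- is the missing idea in your sketch.

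There is also a genuine gap in your Part~(2) reasoning. You assert that $Z_{K_X}\ge 0$ and that $C_X=Z_{K_X}$, but both of these are stated in the paper only for the \emph{minimal} resolution (\thmref{t:cohom}), whereas any $p_g$-cycle in the Gorenstein case must live on a non-minimal resolution (see the remark after \proref{p:CX}). On a blowup at a point outside $\supp(Z_{K_X})$, the new $Z_{K_Y}$ acquires a coefficient $-1$ and is no longer effective, so your inequality $Z_{K_X}Z\ge 0$ can fail. The paper avoids this by taking $C_0=Z_{K_{X}}$ on the minimal resolution and observing that, since every blowup center lies in $\supp(C_i)$, the identity $C_n=Z_{K_Y}$ is preserved (this is the formula $Z_{K_Y}=b^*Z_{K_X}-F$ applied to a center in $\supp(Z_{K_X})$). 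Thus $\cO_{C_n}(-Z)\cong\cO_{C_n}$ directly yields $K_YZ=0$. Your proposal recognizes that $K_XZ=0$ must be built into the construction, but does not supply the mechanism guaranteeing that the cohomological and canonical cycles stay aligned through the blowups.
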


\par
This theorem follows from Propositions
\ref{p:Ggood}, \ref{p:exs} below.
We use the notation of the preceding sections.

\begin{prop}
\label{p:Ggood} 
Assume that $A$ is Gorenstein.
Let $Z>0$ be a $p_g$-cycle on $X$.
Then $I:=I_Z$ is a good ideal if and only if $K_XZ=0$.
\end{prop}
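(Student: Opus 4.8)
The plan is to use the Gorenstein hypothesis to convert the two conditions defining a good ideal into statements about intersection numbers involving $K_X$, and then invoke Corollary \ref{t:stable} together with Kato's Riemann--Roch formula (Theorem \ref{t:kato}). Since $Z$ is a $p_g$-cycle, Corollary \ref{t:stable} already gives $I^2 = IQ$ for a minimal reduction $Q$, so the stability condition is automatic; the whole question reduces to the colength condition $Q \colon I = I$. Because $A$ is Gorenstein, by the duality observed in the excerpt this is equivalent to $2\cdot\ell_A(A/I) = e_0(I)$, so the plan is to compute both sides. First I would record that, since $\cO_X(-Z)$ is generated, $e_0(I) = -Z^2$. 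Next, applying Theorem \ref{t:kato} and using $h^1(\cO_X(-Z)) = p_g(A)$ (this is exactly where the $p_g$-cycle hypothesis enters), one gets
\[
\ell_A(A/I_Z) = -\frac{Z^2 + K_X Z}{2}.
\]
Substituting these into $2\cdot\ell_A(A/I) = e_0(I)$ yields $-(Z^2 + K_X Z) = -Z^2$, i.e. $K_X Z = 0$, and conversely. That is the whole equivalence.

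In more detail, I would organize it as follows. Step one: invoke Corollary \ref{t:stable} to get that $I = I_Z$ is stable, so that "good" is equivalent to "$Q \colon I = I$". Step two: quote the Gorenstein duality fact from Subsection 2.5 (the equality $\ell_A(A/I) = \ell_A(Q\colon I/Q)$, whence under $I^2 = QI$ the goodness is equivalent to $2\,\ell_A(A/I) = e_0(I)$). Step three: since $Z$ is a $p_g$-cycle, $\cO_X(-Z)$ is generated, so $e_0(I) = -Z^2$ (recorded in Subsection 2.3). Step four: apply Theorem \ref{t:kato} with $\cL = \cO_X$ and use $h^1(\cO_X(-Z)) = p_g(A)$ to obtain $\ell_A(A/I_Z) + p_g(A) = -\tfrac{Z^2+K_XZ}{2} + p_g(A)$, hence $\ell_A(A/I_Z) = -\tfrac{Z^2+K_XZ}{2}$. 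Step five: combine and simplify: $2\,\ell_A(A/I) = e_0(I)$ becomes $-(Z^2+K_XZ) = -Z^2$, equivalently $K_XZ = 0$.

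I do not anticipate a serious obstacle here; the argument is essentially a bookkeeping exercise once the $p_g$-cycle hypothesis is used to pin down $h^1$. The one point deserving care is the direction of the Gorenstein duality and making sure the hypothesis "$A$ Gorenstein" is genuinely needed and correctly invoked — the equivalence between "$Q\colon I = I$" and the numerical identity $2\,\ell_A(A/I) = e_0(I)$ relies on $\ell_A(A/I) = \ell_A(Q\colon I/Q)$, which is the Gorenstein duality statement, and on the fact that $I \subseteq Q\colon I$ always holds for stable $I$ (also noted in Subsection 2.5). Granting these, the proof is a two-line computation with Kato's formula. A secondary check worth including explicitly is that $-Z^2 > 0$ and $K_XZ \le 0$ need not be assumed — the equivalence is a clean "iff" with no sign hypotheses, precisely because everything is forced by Theorem \ref{t:kato}.
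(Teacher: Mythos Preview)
Your proposal is correct and follows essentially the same approach as the paper's own proof: use Corollary~\ref{t:stable} to dispose of the stability condition, invoke the Gorenstein characterization from Subsection~2.5 that goodness is equivalent to $2\,\ell_A(A/I)=e_0(I)$, compute $\ell_A(A/I)$ via Kato's formula with $h^1(\cO_X(-Z))=p_g(A)$, and use $e_0(I)=-Z^2$. The paper's write-up is simply a terser version of your Steps~1--5.
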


\begin{proof}
Applying \thmref{t:kato}, we have
$2 \cdot \ell_A(A/I)=-Z^2-K_XZ$.
As noted in subsection 2.5,
$I$ is good if and only if
$I^2=IQ$ and $2 \cdot \ell_A(A/I)=e_0(I)$
for some minimal reduction $Q$ of $I$.
However the condition $I^2=IQ$ is always satisfied for $p_g$-ideals
by Corollary \ref{t:stable}.
Since $\cO_X(-Z)$ is generated, $e_0(I)=-Z^2$.
This completes the proof.
\end{proof}

\begin{lem}
\label{l:coh} 
Suppose that $C>0$ is a cycle on $X$ such that $h^1(\cO_C)=p_g(A)$.
Let $b\: Y\to X$ be the blowing-up of a finite subset $B\subset \supp (C)$.
Let $F=b^{-1}(B)$ and $\t C=b^*C-F$.
Then $h^1(\cO_{\t C})=p_g(A)$.
\end{lem}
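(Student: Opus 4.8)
The plan is to compare cohomology on $Y$ with cohomology on $X$ via the blowing-up $b\colon Y\to X$, and to reduce the statement about $\cO_{\t C}$ to the hypothesis $h^1(\cO_C)=p_g(A)$ together with the fact that $\t C$ is the \emph{total} transform minus the reduced exceptional fiber. First I would record that since $B$ is a finite set of (closed, smooth) points of $X$, blowing them up gives $F=b^{-1}(B)$ a disjoint union of $(-1)$-curves, $R^1b_*\cO_Y=0$, $b_*\cO_Y=\cO_X$, and $b_*\cO_F=\cO_B$ (skyscraper), with $H^i(\cO_F)$ concentrated in degree $0$. The key identity is that $\t C=b^*C-F$ as Cartier divisors on $Y$, so that $\cO_Y(-\t C)=b^*\cO_X(-C)\otimes\cO_Y(F)$; restricting to the appropriate structure sheaves I want to relate $\cO_{\t C}$ on $Y$ to $\cO_C$ on $X$.

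The main computational step is the following comparison: there is a natural exact sequence relating $\cO_{b^*C}$, $\cO_{\t C}$ and $\cO_F$ on $Y$. Concretely, since $b^*C=\t C+F$, I would use the short exact sequence
\[
0\to \cO_Y(-\t C)|_{b^*C}\to \cO_{b^*C}\to \cO_{\t C}\to 0
\]
(the ideal sheaf of $\t C$ inside $b^*C$ is $\cO_Y(-\t C)\otimes\cO_{b^*C}$, which is supported on $F$), and separately push $\cO_{b^*C}$ down to $X$: because $b$ restricted to $b^*C\to C$ is an isomorphism away from $B$ and $b_*\cO_Y=\cO_X$ with $R^1b_*\cO_Y=0$, one gets $b_*\cO_{b^*C}=\cO_C$ and $R^1b_*\cO_{b^*C}=0$, hence $H^i(\cO_{b^*C})=H^i(\cO_C)$ for all $i$. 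In particular $h^1(\cO_{b^*C})=p_g(A)$. It then suffices to show that in the long exact cohomology sequence of the displayed short exact sequence above, the map on $H^1$ from $H^1(\cO_{b^*C})$ to $H^1(\cO_{\t C})$ is an isomorphism; equivalently, that $H^1\big(\cO_Y(-\t C)|_{b^*C}\big)=0$ and that $H^0(\cO_{b^*C})\to H^0(\cO_{\t C})$ is surjective. The sheaf $\cO_Y(-\t C)|_{b^*C}$ is supported on $F$ (a disjoint union of $\PP^1$'s), and on each component $\cO_Y(-\t C)$ restricts to $\cO_{\PP^1}(\t C\cdot C_j)$ for a $(-1)$-curve $C_j\subset F$; since $\t C\cdot C_j = (b^*C-F)\cdot C_j = 0-(-1)=1>0$ on each exceptional curve while the scheme $b^*C\cap F$ has the relevant multiplicity, one checks $H^1$ of this sheaf vanishes and $H^0(\cO_F)\to H^0$ of it is controlled so that the connecting maps behave. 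Combining, $h^1(\cO_{\t C})=h^1(\cO_{b^*C})=h^1(\cO_C)=p_g(A)$, and $\le p_g(A)$ always holds by the remark preceding \thmref{t:cohom}.

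The step I expect to be the main obstacle is the precise bookkeeping on $F$: verifying that the torsion sheaf $\cO_Y(-\t C)\otimes\cO_{b^*C}$ supported on the non-reduced scheme $b^*C\cap F$ has vanishing $H^1$ and that the relevant $H^0$-map is surjective, since $b^*C$ is generally non-reduced along $F$ and one must track the conormal/multiplicity data of $C$ at the points of $B$. A cleaner alternative I would try first is to avoid $b^*C$ entirely: write $\t C=b^*C-F$ and use the two short exact sequences
\[
0\to \cO_F(-\t C)\to \cO_{\t C+F}\to \cO_{\t C}\to 0,\qquad
0\to \cO_{\t C}(-F)\to \cO_{\t C+F}\to \cO_F\to 0,
\]
reading off $h^1$ from whichever is more transparent; since $\cO_F$ and $\cO_F(-\t C)$ are sheaves on a disjoint union of $\PP^1$'s with non-negative-degree restrictions, their $H^1$ vanishes, forcing $h^1(\cO_{\t C})=h^1(\cO_{\t C+F})=h^1(\cO_{b^*C})=h^1(\cO_C)=p_g(A)$ as desired.
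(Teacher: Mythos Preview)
Your ``cleaner alternative'' is exactly the paper's argument: the sequence
\[
0\to \cO_F(-\t C)\to \cO_{b^*C}\to \cO_{\t C}\to 0
\]
(note $\t C+F=b^*C$ and $-\t C=-b^*C+F$) together with $b_*\cO_{b^*C}=\cO_C$ obtained from pushing forward $0\to \cO_Y(-b^*C)\to \cO_Y\to \cO_{b^*C}\to 0$. Two small corrections: in your first attempt the kernel of $\cO_{b^*C}\to\cO_{\t C}$ is \emph{not} $\cO_Y(-\t C)|_{b^*C}$ (that is a line bundle on all of $b^*C$) but rather $\cO_F(-\t C)$, which is why your alternative is the right way to go; and the degree of $\cO_F(-\t C)$ on each $(-1)$-curve $C_j$ is $-\t C\cdot C_j=-1$, not non-negative. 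This is harmless---indeed better---since $H^0(\cO_{\PP^1}(-1))=H^1(\cO_{\PP^1}(-1))=0$, giving $h^1(\cO_{b^*C})=h^1(\cO_{\t C})$ directly. The paper only uses the inequality $h^1(\cO_C)=h^1(b_*\cO_{b^*C})\le h^1(\cO_{b^*C})$ from Leray, whereas you also note $R^1b_*\cO_{b^*C}=0$ to get equality; either suffices.
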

\begin{proof}
It suffices to show that $h^1(\cO_{\t C})\ge p_g(A)$.
We have that $R^1b_*\cO_Y(-b^*C)=0$
and $H^i(\cO_F(-b^*C+F))=0$ for $i=0,1$,
since $\cO_F(-b^*C)\cong \cO_F$ and $F$ is
the sum of $(-1)$-curves.
From the exact sequence
\[
0 \to \cO_F(-b^*C+F)\to \cO_{b^*C}\to \cO_{\t C}\to 0,
\]
we obtain that $h^1(\cO_{b^*C})=h^1(\cO_{\t C})$.
On the other hand, it follows from the exact sequence
\[
0\to b_*\cO_Y(-b^*C)\to b_*\cO_Y\to b_*\cO_{b^*C}\to 0
\]
that $b_*\cO_{b^*C}=\cO_C$, since $b_*\cO_Y(-b^*C)
=\cO_X(-C)$ and $b_*\cO_Y=\cO_X$.
Therefore,
\[
p_g(A)=h^1(\cO_C)=h^1(b_*\cO_{b^*C})
\le h^1(\cO_{b^*C})=h^1(\cO_{\t C}).
\qedhere
\]
\end{proof}

\begin{rem} 
\label{Lem4.3rem} 
In the situation above, if $C'\subset Y$
denote the strict transform of $C$,
the equality $h^1(\cO_{C'})=h^1(\cO_C)$
does not hold in general.
Let $A=\C[[x,y,z]]/(x^2+y^3+z^7)$ and $X$
the minimal good resolution, i.e.,
$E$ is simple normal crossing and any $(-1)$-curve
intersects at least other three exceptional curves.
Then $E=E_0+\cdots+E_3$ is star-shaped,
where $E_0$ denotes the central curve, and $Z_{K_X}=E_0+E$.
If $b\: Y\to X$ is the blowing-up of a point of
$E_0\setminus (E_1\cup E_2\cup E_3)$,
then the strict transform of $E$ contracts to
a rational singularity.
If $C=Z_{K_X}$, then $h^1(\cO_C)=1$ but $h^1(\cO_{C'})=0$.
\end{rem}

\begin{prop}
\label{p:exs} 
There exist a resolution $g\: Y\to \spec A$ and a $p_g$-cycle $Z$ on $Y$.
Furthermore, such a resolution can be obtained from $X$ by
blowing-ups of smooth points of the exceptional set.
If $Z_{K_X}$ is a cycle
$($i.e., all the coefficients are integers$)$
and $Z_{K_X}> 0$, then $Z$ can be taken as a $p_g$-cycle
satisfying $ZK_Y=0$.
\end{prop}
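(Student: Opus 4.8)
The plan is to build the resolution $g\colon Y\to\spec A$ by iteratively blowing up smooth points of the exceptional set, using Lemma \ref{l:coh} to keep control of the cohomological cycle, and then to extract a $p_g$-cycle from it via Proposition \ref{p:CX}. If $p_g(A)=0$ there is nothing to do: $A$ is rational, every anti-nef cycle is a $p_g$-cycle by Example \ref{Lipman}, and $Z_{K_X}$ is effective on the minimal resolution, so assume $p_g(A)>0$. Starting from $X$, let $C=C_X$ be the cohomological cycle (Theorem \ref{t:cohom}), so $h^1(\cO_C)=p_g(A)$. The obstruction to $C_X$ itself being (associated to) a $p_g$-cycle is that $\cO_{C_X}(-Z)\cong\cO_{C_X}$ typically fails for the obvious candidate $Z=C_X$, or more basically that the linear system we want to use has base points on $C_X$. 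The idea is to separate those base points by blowing up.

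First I would make precise what cycle to aim for. After enough blow-ups at smooth points of the exceptional set, one gets a resolution $Y$ together with a cycle $\widetilde C$ (the successive transforms of $C$ as in Lemma \ref{l:coh}, $\widetilde C=b^*C-F$ at each stage) with $h^1(\cO_{\widetilde C})=p_g(A)$ preserved by that lemma. The goal is to arrange that on $Y$ there is an anti-nef cycle $Z$, no fixed component, with $\cO_{C_Y}(-Z)\cong\cO_{C_Y}$; then Proposition \ref{p:CX} gives that $Z$ is a $p_g$-cycle. Concretely, choose on $Y$ a cycle $Z$ supported on $E_Y$ with $Z\ge E_Y$, $Z$ anti-nef, $\cO_Y(-Z)$ with no fixed component, and $ZE_i=0$ for exactly the components $E_i$ meeting $\supp C_Y$ in the relevant way, so that $Z$ is numerically trivial along $C_Y$ and in fact $\cO_{C_Y}(-Z)$ is trivial. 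The standard trick: take $Z$ large and very negative off $\supp C_Y$ (forcing generation and no fixed components by Theorem \ref{t:Lv}-type arguments) while demanding $Z\equiv 0$ along $C_Y$; after blowing up the base points of $|{-Z}|$ lying on $C_Y$ and re-running Lemma \ref{l:coh}, the restriction $\cO_{C_Y}(-Z)$ becomes $\cO_{C_Y}$. The main obstacle is precisely showing that finitely many blow-ups suffice to kill all base points of the system on $C_Y$ while not disturbing the equality $h^1(\cO_{C_Y})=p_g(A)$ — but Lemma \ref{l:coh} is exactly designed to handle the second point, and base-point freeness after finitely many blow-ups is classical for invertible subsheaves on surfaces.

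For the refinement when $Z_{K_X}$ is an effective integral cycle, I would additionally impose the numerical constraint $ZK_Y=0$. Note $b^*K_X=K_Y-F$ at a single blow-up, so $K_Y\equiv -b^*Z_{K_X}+F$; tracking $Z_{K_Y}$ through the blow-ups, one still has $Z_{K_Y}\ge 0$ and its support contains $C_Y$ (by Theorem \ref{t:cohom} when $A$ is Gorenstein, but more generally because the cohomological cycle is governed by $K$). One then chooses the target cycle $Z$ to be numerically trivial on all of $\supp Z_{K_Y}$ — this is compatible with the previous requirement since $\supp C_Y\subseteq\supp Z_{K_Y}$ — and anti-nef with no fixed component elsewhere. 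Because $K_Y\equiv -Z_{K_Y}$ is numerically trivial on $\supp Z_{K_Y}\supseteq\supp Z$, we get $K_YZ=(-Z_{K_Y})Z=0$. The only subtlety is ensuring such a $Z$ still exists as a genuine anti-nef cycle with no fixed component: this is arranged by the same blow-up-and-Theorem \ref{t:Lv} procedure applied with the larger "forbidden" set $\supp Z_{K_Y}$ in place of $\supp C_Y$, which is where the hypothesis that $Z_{K_X}$ is a cycle (so these supports are well-behaved effective divisors) is used. I expect the bookkeeping of the cohomological cycle through the tower of blow-ups — verifying $h^1(\cO_{C_{Y}})=p_g(A)$ persists and that $\supp C_Y$ stabilizes — to be the most delicate part, and Lemma \ref{l:coh} together with Proposition \ref{p:CX} are the tools that make it go through.
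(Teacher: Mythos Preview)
Your overall architecture is right---track a cycle $C$ with $h^1(\cO_C)=p_g(A)$ through blow-ups via Lemma~\ref{l:coh}, then find an anti-nef $Z$ with $\cO_{C}(-Z)\cong\cO_{C}$---but there is a genuine gap at the heart of the construction. Proposition~\ref{p:CX} requires $\cO_{C_Y}(-Z)$ to be \emph{trivial}, not merely numerically trivial. You arrange $Z\equiv 0$ along $C_Y$, but since $p_g(A)>0$ the cycle $C_Y$ must involve curves of positive genus, and on such curves degree-zero line bundles need not be trivial. Your sentence ``after blowing up the base points of $|{-Z}|$ lying on $C_Y$ \dots\ the restriction $\cO_{C_Y}(-Z)$ becomes $\cO_{C_Y}$'' is precisely the unjustified step: resolving base points does not trivialize a nontrivial degree-zero bundle. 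Worse, if $\cO_{E_i}(-Z)$ is nontrivial of degree zero on some $E_i\le C_Y$, then $H^0(\cO_{E_i}(-Z))=0$, so $E_i$ is a fixed component of $|{-Z}|$ and you cannot even invoke Proposition~\ref{p:CX}.

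The paper supplies the missing idea: take $Z$ to be the \emph{exceptional part of a principal divisor}. First produce on $X$ an anti-nef $W$ with $\cO_X(-W)$ generated and $H^1(\cO_X(-W))=0$ (using Theorem~\ref{t:Lv}), pick a general $h\in I_W$ with $\di_X(h)=W+H_0$ where $H_0$ is reduced and transverse to $E$, and choose any $C_0$ with $h^1(\cO_{C_0})=p_g(A)$. Now iteratively blow up $B_i=\supp(C_i)\cap H_i$, setting $C_{i+1}=b_{i+1}^*C_i-F_{i+1}$; this terminates after at most the maximal coefficient of $C_0$ steps. On $Y=Y_n$ put $Z=\di_Y(h)-H_n$. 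Since $\di_Y(h)$ is principal, $\cO_{C_n}(-Z)\cong\cO_{C_n}(H_n)$, and $H_n\cap\supp C_n=\emptyset$ by construction, so $\cO_{C_n}(-Z)\cong\cO_{C_n}$ automatically; moreover $h$ itself witnesses that $\cO_Y(-Z)$ has no fixed component. Lemma~\ref{l:coh} and the surjection $H^1(\cO_Y(-Z))\to H^1(\cO_{C_n}(-Z))$ then give $h^1(\cO_Y(-Z))=p_g(A)$. For $K_YZ=0$ one takes $C_0=Z_{K_X}$; the blow-up formulas force $C_n=Z_{K_Y}$, and $\cO_{Z_{K_Y}}(-Z)\cong\cO_{Z_{K_Y}}$ yields $Z_{K_Y}Z=0$, hence $K_YZ=0$.
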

\begin{proof}
Since the intersection matrix is negative definite,
there exists an anti-nef cycle $W>0$ such that for any $E_i$,
\[
-WE_j\ge \max\{K_XE_j, (K_X+E_i)E_j, 2 \cdot g(E_j)\}
\]
for every $E_j\subset E$.
Consider the following exact sequence:
\[
0\to \cO_X(-W-E_i) \to \cO_X(-W) \to \cO_{E_i}(-W) \to 0.
\]
Since
$-W-(K_X+E_i)$ is nef, it follows from \thmref{t:Lv} that
$H^1(\cO_X(-W-E_i))=0$.
Therefore the map
\[
H^0(\cO_X(-W)) \to H^0(\cO_{E_i}(-W) )
\]
is surjective.
If $\cO_X(-W)$ has a base point $p\in E_i$, then $p$ should also be a
base point of
$H^0(\cO_{E_i}(-W) )$.
On the other hand, since $\deg \cO_{E_i}(-W) \ge 2g(E_i)$,
the linear system $H^0(\cO_{E_i}(-W) )$ has no base points.
Thus we obtain that $\cO_X(-W)$ is generated.
Since 
$-W-K_X$ is nef, we have $H^1(\cO_X(-W))=0$ from \thmref{t:Lv}. Hence
the exact sequence
\[
0\to \cO_X(-W) \to \cO_X \to \cO_{W} \to 0
\]
implies $h^1(\cO_W)=p_g(A)$.
Since $\cO_X(-W)$ is generated, there exists a function
$h\in H^0(\cO_X(-W))$ such that $\di (h)=W+H_0$,
where $H_0$ is a reduced divisor including no component of $E$, and that
$E+H_0$ is normal crossing at $E\cap H_0$.
Let $C_0>0$ be a cycle on $X$ such that $h^1(\cO_{C_0})=p_g(A)$;
at least the cycle $W$ satisfies this property
(but $\cO_X(-C_0)$ need not be generated).
Let $Y_0=X$ and $B_0=\supp(C_0)\cap H_0$.
For $i\ge 0$, if $B_i\ne \emptyset$,
take the blowing-up $b_{i+1}\:Y_{i+1}\to Y_i$ of $B_i$, and
let $H_{i+1}$ be the strict transform of $H_i$ by $b_{i+1}$,
$C_{i+1}=b_{i+1}^*C_i-F_{i+1}$, where $F_{i+1}=b_{i+1}^{-1}(B_i)$,
and $B_{i+1}=\supp (C_{i+1})\cap H_{i+1}$.
Note that there exists $n$ such that $B_n=\emptyset$;
in fact, such $n$ is not more than the maximal coefficient of $C_0$.
If $B_n=\emptyset$, let $Y=Y_n$ and $Z=\di_Y(h)-H_n$.
Then $\cO_{C_n}(-Z)\cong \cO_{C_n}(-\di_Y(h))\cong \cO_{C_n}$.
By \lemref{l:coh}, $h^1(\cO_{C_n}(-Z))=p_g(A)$.
From the surjection
$H^1(\cO_{Y}(-Z)) \to H^1(\cO_{C_n}(-Z))$ and
\thmref{t:lepg}, we obtain that
$h^1(\cO_Y(-Z))=p_g(A)$ and $\cO_Y(-Z)$ is generated.
\par
If $Z_{K_X}$ is a cycle and $Z_{K_X}>0$, then we can take $C_0=Z_{K_X}$.
In this case we obtain that $C_n=Z_{K_Y}$.
Since $\cO_{C_n}(-Z)\cong \cO_{C_n}$, we obtain that $K_YZ=0$.
\end{proof}

Let us explain the procedure of Proposition \ref{p:exs} by an example.

\begin{ex} 
\label{ex:highpg} 
Let us consider a cone singularity and check the key roles in the proof of \proref{p:exs}.
Let $C$ be a nonsingular curve of genus $g\ge 2$ and put
\[
A=\bigoplus_{n\ge 0}H^0(\cO_C(nK_C)).
\]
By Pinkham's formula \cite{p.qh}, we have
$p_g(A)=\sum_{n\ge 0}h^1(\cO_C(nK_C))=g+1$.
Let $f\: X\to \spec A$ be the minimal resolution.
Then $E\cong C$, $\cO_E(-E)\cong \cO_E(K_E)$, and $K_X=-2E$.
It follows that $H^1(\cO_X(-2E))=0$ by \thmref{t:Lv}.
From the exact sequence
\[
0\to \cO_X(-2E)\to \cO_X(-E)\to \cO_E(-E)\to 0,
\]
we see that $\cO_X(-E)$ is generated.
Thus we can take $W=kE$ ($k\ge 1$) 
 and $C_0=Z_{K_X}=2E$. 
Then $B_0$ and $B_1$ consists of $-WE=k(2g-2)$ 
points and $B_2=\emptyset$. Hence we have $Y=Y_2$.
In this case, $-Z^2 = H_n Z = k(k+2)(2g-2)$.  
\par
The case $g=2$ can be realized by $A=k[x,y,z]/(x^2+y^6+z^6)$.
This is graded by $\deg x=3$, $\deg y=\deg z=1$.
If $W=E$ ($k=1$) and $h=y\in H^0(\cO_X(-W))$, 
then $I_Z=(x,y,z^3)$ and $I_Z$ is a good ideal with multiplicity $6$; this is also an Ulrich ideal (see Section 7). 
Let $W=2E$ ($k=2$) and $h=yz\in H^0(\cO_X(-W))$. 
Then $I:=I_Z$ is a homogeneous
ideal and
$I=(yz, y^4, z^4, xy, xz)$ is a good ideal with multiplicity $16$.
\end{ex}

\section{Good ideals for non-Gorenstein rational singularities}

In this section, we characterize good ideals for rational singularities,
which
gives a generalization of \cite[Section 7]{GIW}.

\begin{thm} 
\label{rational} 
Assume that $(A,\m)$ is a two-dimensional rational singularity.
Let $I$ be an $\m$-primary ideal of $A$.
Then the following conditions are equivalent:
\begin{enumerate}
\item $I$ is a good ideal, that is, $I^2=QI$ and $I=Q\colon I$ for some
minimal reduction $Q$ of $I$.
\item $I$ is an integrally closed ideal that is represented on the
minimal resolution.
\end{enumerate}
\end{thm}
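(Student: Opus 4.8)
The plan is to prove the two implications separately, using the fact (Lemma \ref{good_lem}) that in a rational singularity every good ideal is automatically integrally closed, together with Lipman's theorem (Example \ref{Lipman}) that on a rational singularity every anti-nef cycle is a $p_g$-cycle, so $\cO_X(-Z)$ is generated and $h^1(\cO_X(-Z))=0$ for every anti-nef $Z>0$.

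\emph{$(1)\Rightarrow(2)$.} Suppose $I$ is good. By Lemma \ref{good_lem}, $I$ is integrally closed, so it is represented by an anti-nef cycle $Z$ on some resolution $X$; since $A$ is rational, $Z$ is a $p_g$-cycle, and by Proposition \ref{minrep} there is a minimal resolution $X_1$ with respect to $I$, characterized by $ZC<0$ for every $(-1)$-curve $C$ on $X_1$. I must show $X_1$ is the minimal resolution of $\spec A$, i.e.\ that no further $(-1)$-curve can be contracted. Suppose, for contradiction, that $X_1$ is not minimal; then there is a $(-1)$-curve $C$ on $X_1$, and by the characterization $ZC<0$, so $C$ is not contracted when we blow down to $X_1$ from a larger resolution — but $C$ itself is a $(-1)$-curve that could be blown down, giving a smaller resolution $X_1'$ of $\spec A$. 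The point is that $I\cO_{X_1'}$ is no longer invertible (since $ZC<0$ means $Z$ is not a pullback from $X_1'$), so $I$ is not represented on $X_1'$, and more importantly the integral closure of $I$ "computed on $X_1'$" — namely $I_{Z'}$ where $Z'$ is the anti-nef cycle on the minimal resolution with $I_{Z'}=\bar I = I$ — would give a strictly larger ideal representing the same integral closure, forcing $I\subsetneq I_{Z'}$ with $I_{Z'}$ integral over $I$ and $I_{Z'}^2=I_{Z'}Q'$ (Corollary \ref{t:stable}, since $Z'$ is a $p_g$-cycle), contradicting Lemma \ref{good_lem}. So $X_1$ is the minimal resolution.

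\emph{$(2)\Rightarrow(1)$.} Conversely, suppose $I$ is integrally closed and represented on the minimal resolution $X$ by an anti-nef cycle $Z>0$. Since $A$ is rational, $Z$ is a $p_g$-cycle, so by Corollary \ref{t:stable} we already have $I^2=IQ$ for any minimal reduction $Q$; it remains to prove $Q\colon I=I$. By Proposition \ref{p:Ggood}'s proof (valid verbatim in the rational case, where $e_0(I)=-Z^2$ and Kato's formula \thmref{t:kato} gives $2\ell_A(A/I)=-Z^2-K_XZ$ since $h^1(\cO_X(-Z))=0$), goodness is equivalent to $K_XZ=0$; and since $X$ is the \emph{minimal} resolution of a rational singularity, $Z_{K_X}=0$ (stated in subsection 2.6: $Z_{K_X}=0$ iff $A$ is rational Gorenstein and $X$ minimal — so I should be careful here, as $A$ need not be Gorenstein). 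For the non-Gorenstein case I cannot use the duality shortcut; instead I would argue directly that $Q\colon I = I$ by computing lengths. Write $Q=(f_1,f_2)$ with $f_i\in I_Z$ general, so \eqref{Secondeq} and Proposition \ref{vareps} apply with $Z_1=Z_2=Z$, giving $\varepsilon(Z,Z)=\ell_A(\overline{I^2}/QI)=0$, hence $I^2=QI=\overline{I^2}$, and more relevantly I would exhibit $Q\colon I$ via the exact sequence from \eqref{eq:OO(-Z)}: $Q\colon I = H^0(\cO_X(-Z))$ because a general $h\in I_Z$ cutting out the strict transform of $\spec A/(h)$ makes $\cO_X\xrightarrow{\times h}\cO_X$ identify $Q\colon I$ with sections of $\cO_X(Z-Z')$ for the relevant cycles, and since on the minimal resolution $K_X Z \ge 0$ with equality forced by $Z_{K_X}\ge 0$ being numerically trivial against the anti-nef $Z$...

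The main obstacle, and where I expect the real work to lie, is precisely the non-Gorenstein direction of $(2)\Rightarrow(1)$: proving $Q\colon I=I$ without the Gorenstein duality $\ell_A(A/I)=\ell_A(Q\colon I/Q)$. The natural route is to reinterpret $Q\colon I$ sheaf-theoretically on $X$ — since $I=I_Z$ is represented on $X$ and $Q\subset I$ is generated by two general elements, $Q\colon I$ should equal $H^0(\cO_X(-Z))=I$ provided one shows the two general sections $f_1,f_2$ generate $\cO_X(-Z)$ and have no common zeros on $E$, which holds because $Z$ is a $p_g$-cycle hence $\cO_X(-Z)$ is generated (Lipman). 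Then $Q\colon I$ consists of $a\in A$ with $aI\subset Q$; pulling back to $X$, $a f_i \in Q$ for a system of generators forces $a\in H^0(\cO_X(-Z)) = I$ by the Koszul relation from \eqref{Secondeq} together with the vanishing $H^1(\cO_X(-2Z))=0$ (again rationality). I would organize this as: first reduce to the minimal resolution using Proposition \ref{minrep}; second, establish $Q\colon I\subseteq H^0(\cO_X(-Z))$ using the Koszul complex \eqref{Secondeq} and $H^1(\cO_X(-2Z))=0$; third, conclude $Q\colon I=I$ and $I^2=QI$ from Corollary \ref{t:stable}.
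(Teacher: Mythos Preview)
Your proposal has genuine gaps in both directions.

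For $(1)\Rightarrow(2)$: the contradiction via \lemref{good_lem} does not go through. You write $I_{Z'}=\bar I=I$ and in the same breath $I\subsetneq I_{Z'}$, which is inconsistent; and if you mean the natural cycle $Z'=Z-aC$ (with $a=-ZC>0$) coming from the $(-1)$-curve $C$, then $-(Z')^2=-Z^2-a^2<e_0(I)$, so $I_{Z'}$ is \emph{not} integral over $I$ and \lemref{good_lem} does not apply. The paper's argument (\proref{blowdown}) instead shows directly that $I_{Z'}\subset Q\colon I$: the twisted Koszul sequence $0\to\cO_X(C)\to\cO_X(-Z')^{\oplus 2}\to\cO_X(-Z-Z')\to 0$ together with $h^1(\cO_X(C))=p_g(A)$ (\lemref{l:(-1)}) yields $I_{Z+Z'}=QI_{Z'}$, hence $II_{Z'}\subset QI_{Z'}$, hence $I\subsetneq I_{Z'}\subset Q\colon I$.

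For $(2)\Rightarrow(1)$: the sketch ``Koszul plus $H^1(\cO_X(-2Z))=0$'' does not yield $Q\colon I\subset I$; that vanishing only recovers $I_{2Z}=QI$, which is already \corref{t:stable}, and says nothing about an element $a\in Q\colon I$. The paper's proof (\proref{good_on_min}) requires two ingredients you have not identified. First, a non-vanishing result on the \emph{minimal} resolution (\lemref{l:D12}): if $D_1,D_2\ge 0$ share no component and $D_1\ne 0$ then $H^1(\cO_X(D_1-D_2))\ne 0$; this is precisely where minimality of $X$ enters, via $K_XD_1-D_1^2>0$. Second, the identity $\core(I)=(Q\colon I)Q$ (\lemref{l:core}). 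One then takes a general $h\in Q\colon I$ with exceptional cycle $Z_h$, observes that multiplication by $h$ gives a surjection $H^1(\cO_X(Z))\twoheadrightarrow H^1(\cO_X(Z-Z_h))$, identifies these groups via the Koszul sequence \eqref{eq:Kos} (and its twist by $-Z_h$) with $I/Q$ and $I_{Z+Z_h}/QI_{Z_h}$ respectively, and notes that the map is zero because $hI\subset(Q\colon I)Q\subset QI_{Z_h}$. Hence $H^1(\cO_X(Z-Z_h))=0$, and \lemref{l:D12} forces $Z\le Z_h$, i.e.\ $h\in I_Z=I$.
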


\par
Now let $I$ be a good ideal 
in a rational singularity $(A,\m)$.
Then \lemref{good_lem} implies that $I$ is 
 integrally closed and thus
$I$ is represented by some anti-nef cycle $Z$
on some resolution of singularities $X \to \Spec A$.
Then $Z$ is a $p_g$-cycle on $X$; see Example \ref{Lipman}.
Before proving that $X$ is minimal, 
we study properties of $p_g$-cycle for
any normal local ring.

\par
We first show that any integrally closed ideal
that is represented on non-minimal resolution is \textit{not} good.
We need the following lemma.

\begin{lem}
\label{l:(-1)}
Let $E_1\le E$ be a $(-1)$-curve.
Then $h^1(\cO_X(E_1))=p_g(A)$.
\end{lem}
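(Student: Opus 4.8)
The plan is to use the standard structure sequence associated to the effective divisor $E_1$ on the smooth surface $X$, namely
\[
0 \to \cO_X \to \cO_X(E_1) \to \cO_{E_1}(E_1) \to 0,
\]
and to read off the cohomology of the middle term from that of the two outer terms.

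First I would identify the sheaf $\cO_{E_1}(E_1) := \cO_X(E_1)\otimes \cO_{E_1}$. Since $E_1$ is a $(-1)$-curve, $E_1\cong \PP^1$ and the restriction $\cO_X(E_1)|_{E_1}$ is a line bundle on $\PP^1$ of degree $E_1^2=-1$, hence $\cO_{E_1}(E_1)\cong \cO_{\PP^1}(-1)$. Consequently $H^0(\cO_{E_1}(E_1))=0$ and $H^1(\cO_{E_1}(E_1))=0$.

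Next I would take the long exact cohomology sequence of the displayed short exact sequence. The vanishing of both $H^0$ and $H^1$ of $\cO_{E_1}(E_1)$ forces the connecting maps to give an isomorphism $H^1(\cO_X)\xrightarrow{\ \sim\ } H^1(\cO_X(E_1))$. Since $h^1(\cO_X)=p_g(A)$ by definition, this yields $h^1(\cO_X(E_1))=p_g(A)$, as claimed.

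There is no real obstacle here; the only point requiring care is the identification of the restricted line bundle $\cO_X(E_1)|_{E_1}$ with $\cO_{\PP^1}(-1)$ and the corresponding cohomology vanishing, which is what makes the comparison an isomorphism rather than merely a surjection or injection.
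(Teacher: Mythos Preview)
Your proposal is correct and follows exactly the same approach as the paper: the paper also uses the exact sequence $0\to \cO_X\to \cO_X(E_1)\to \cO_{E_1}(E_1)\to 0$, identifies $\cO_{E_1}(E_1)\cong \cO_{\PP^1}(-1)$, and uses the vanishing of its $H^0$ and $H^1$ to conclude $h^1(\cO_X)=h^1(\cO_X(E_1))$.
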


\begin{proof}
Consider the exact sequence
\[
0\to \cO_X\to \cO_X(E_1)\to \cO_{E_1}(E_1)\to 0.
\]
Since $H^i(\cO_{E_1}(E_1))=H^i(\cO_{\mathbb{P}^1}(-1))=0$ for $i=0,1$,
we have $h^1(\cO_X)=h^1(\cO_X(E_1))$.
\end{proof}

\par
The implication $(1)\Longrightarrow (2)$ in
Theorem \ref{rational} follows from the following
proposition and Lemma \ref{good_lem}.

\begin{prop} 
\label{blowdown}
Assume that $Z$ is a $p_g$-cycle on $X$
and there exists a $(-1)$-curve $E_1$ such that
$ZE_1=-a<0$.
Let $\phi\: X\to X'$ be the blowing-down of $E_1$
and $Z'=Z-aE_1$. Consider the following conditions$:$
\begin{enumerate}
\item $Z'$ is a $p_g$-cycle on $X$;
\item $\phi_*Z$ is a $p_g$-cycle on $X'$;
\item $I:=I_Z \subsetneqq I_{Z'}$.
\end{enumerate}
We have the implication:
$(1) \Leftrightarrow (2) \Rightarrow (3)$; if $a=1$,
$(3)$ implies $(1)$.
If the condition $(1)$ is satisfied, then
$I_{Z'}\subset Q: I$;
in particular, $I$ is not good.
If $(A,\m)$ is rational, then all conditions are satisfied.
\end{prop}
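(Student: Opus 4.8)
\textbf{Proof proposal for Proposition \ref{blowdown}.}

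The plan is to exploit the basic identities relating $Z$ and $Z'=Z-aE_1$, namely $\cO_X(-Z')=\cO_X(-Z)\otimes\cO_X(-aE_1)$ together with the fact that $E_1$ is a $(-1)$-curve, and to push everything down along $\phi$ via the projection formula. First I would establish $(1)\Leftrightarrow(2)$. Since $\phi$ is the contraction of a $(-1)$-curve, $X'$ is again a resolution of $\Spec A$, $\phi_*\cO_X=\cO_{X'}$, $R^1\phi_*\cO_X=0$, and more generally (using the argument already employed in the proof of \lemref{l:h^1}, via the Leray spectral sequence and the projection formula) $h^1(\cO_X(\phi^*D))=h^1(\cO_{X'}(D))$ for any cycle $D$ on $X'$. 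Now $Z'=\phi^*(\phi_*Z)$ because $Z'E_1=(Z-aE_1)E_1=-a-a(-1)=0$, so $Z'$ is the pullback of its image cycle; hence $h^1(\cO_X(-Z'))=h^1(\cO_{X'}(-\phi_*Z))$, and likewise $\ell_A(A/I_{Z'})=\ell_A(A/I_{\phi_*Z})$ and $\cO_X(-Z')$ is generated iff $\cO_{X'}(-\phi_*Z)$ is. This gives the equivalence of $(1)$ and $(2)$ directly from the definition of $p_g$-cycle.

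Next, for $(1)\Rightarrow(3)$, I would argue that $I=I_Z\subseteq I_{Z'}$ always (since $Z\ge Z'$), and that the inclusion is strict when $(1)$ holds. One clean way: compute colengths via Kato's formula or, more elementarily, note that if $I_Z=I_{Z'}$ then $\cO_X(-Z)$ and $\cO_X(-Z')$ define the same blow-up, forcing $ZE_1=Z'E_1$, contradicting $ZE_1=-a<0=Z'E_1$. Alternatively one can compare $h^1$: from the exact sequence $0\to\cO_X(-Z)\to\cO_X(-Z')\to\cO_{aE_1}(-Z')\to 0$ and $h^1(\cO_{aE_1}(-Z'))=h^1(\cO_{aE_1})=0$ (as $\cO_{E_1}(-Z')\cong\cO_{E_1}$ and $E_1\cong\PP^1$, so each graded piece is $\cO_{\PP^1}$, giving $h^1=0$ on the whole band $aE_1$), one gets $H^0(\cO_X(-Z'))/H^0(\cO_X(-Z))\cong H^0(\cO_{aE_1}(-Z'))$, which is nonzero; this simultaneously shows $I\subsetneq I_{Z'}$ and, when combined with $h^1(\cO_X(-Z))=h^1(\cO_X(-Z'))=p_g(A)$, pins down the length $\ell_A(I_{Z'}/I)=h^0(\cO_{aE_1})=a$.

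For the converse when $a=1$, i.e. $(3)\Rightarrow(1)$: here $Z'=Z-E_1$ and $ZE_1=-1$. I would use Proposition \ref{Ineq-h1} to get $h^1(\cO_X(-Z'))\ge h^1(\cO_X(-Z))=p_g(A)$ is false in that direction; instead apply the short exact sequence above together with the hypothesis $I\subsetneq I_{Z'}$ to force $H^0(\cO_{E_1}(-Z'))\ne 0$, hence $\cO_{E_1}(-Z')$ has nonnegative degree; since $Z'E_1=0$ this degree is $0$ and $\cO_{E_1}(-Z')\cong\cO_{E_1}$, so from the sequence $h^1(\cO_X(-Z))=h^1(\cO_X(-Z'))$, and by \thmref{t:lepg}(1) applied to $Z'$ (which has no fixed component along $E_1$ precisely because $I\ne I_{Z'-E_1}$... one must check $\cO_X(-Z')$ has no fixed component, using that $Z$ is a $p_g$-cycle hence $\cO_X(-Z)$ is generated) we conclude $h^1(\cO_X(-Z'))=p_g(A)$ and $\cO_X(-Z')$ is generated, i.e. $(1)$. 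The delicate point I expect to be the main obstacle is verifying that $\cO_X(-Z')$ has no fixed component so that \thmref{t:lepg} applies — this needs the hypothesis $a=1$ and the strictness in $(3)$, and is exactly where the restriction $a=1$ enters.

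Finally, for the last two assertions: if $(1)$ holds, then by \thmref{t:sg} applied with the $p_g$-cycle $Z$ and the generated cycle $Z'$ (note $\cO_X(-Z')$ is generated by $(1)$), for general $g\in I_{Z'}$ and $g'\in I_Z$ we have $I_{Z+Z'}=gI_Z+g'I_{Z'}$; taking $Q=(g_1,g_2)$ a minimal reduction of $I$ built from general elements and using $I_{2Z}=QI$ (Corollary \ref{t:stable}), one checks $I_{Z'}\cdot I\subseteq I_{Z+Z'}=I_{Z'}\cdot I_Z$ and then that $Q\cdot I_{Z'}\subseteq I_{2Z}=QI$ forces $I_{Z'}\subseteq Q:I$. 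Since $I_{Z'}\supsetneq I$ by $(3)$, we get $Q:I\ne I$, so $I$ is not good. The rational case is immediate from Example \ref{Lipman}: every anti-nef cycle is a $p_g$-cycle, so $Z'$ (being anti-nef: $Z'E_1=0$ and $Z'E_j=ZE_j-aE_1E_j\le 0$ for $j\ne 1$ since $E_1E_j\ge 0$) is a $p_g$-cycle, giving $(1)$, and hence all conditions hold.
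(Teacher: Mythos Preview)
Your treatment of $(1)\Leftrightarrow(2)$, $(1)\Rightarrow(3)$, the converse when $a=1$, and the rational case is essentially the paper's argument and is fine.

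The genuine gap is your argument for $I_{Z'}\subset Q:I$. The chain ``$Q\cdot I_{Z'}\subseteq I_{2Z}=QI$ forces $I_{Z'}\subseteq Q:I$'' fails on both ends. First, $Q\cdot I_{Z'}\subseteq I_{2Z}$ is simply false: for $h\in I_{Z'}$ general and $f_i\in Q\subset I_Z$, the product $f_ih$ vanishes to order $Z+Z'$ along $E$, and the coefficient of $E_1$ in $Z+Z'$ is strictly smaller than in $2Z$; so $f_ih\notin I_{2Z}$. Second, even granting it, $QI_{Z'}\subseteq QI$ is not the statement $I_{Z'}I\subseteq Q$ you need (it would rather say $I_{Z'}\subseteq I$, which is false). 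What actually has to be proved is $I_{Z+Z'}=QI_{Z'}$; then $II_{Z'}=I_{Z+Z'}=QI_{Z'}\subset Q$ gives $I_{Z'}\subset Q:I$. Your appeal to \thmref{t:sg} only yields $I_{Z+Z'}=fI_{Z'}+f'I_Z$ with $f'\in I_{Z'}$ general, and there is no way to force $f'I_Z\subset QI_{Z'}$ from that.

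The paper's device is to tensor the Koszul sequence $0\to\cO_X(Z)\to\cO_X^{\,2}\to\cO_X(-Z)\to 0$ for the reduction $Q=(f_1,f_2)$ by $\cO_X(-Z')$, obtaining
\[
0\to\cO_X(Z-Z')\to\cO_X(-Z')^{\,2}\to\cO_X(-Z-Z')\to 0,
\]
with $Z-Z'=aE_1$. Surjectivity of $H^0(\cO_X(-Z')^{2})\to H^0(\cO_X(-Z-Z'))$ (which is exactly $I_{Z+Z'}=QI_{Z'}$) then follows from the $h^1$ count: under $(1)$ both $Z'$ and $Z+Z'$ are $p_g$-cycles, so $h^1(\cO_X(-Z'))=h^1(\cO_X(-Z-Z'))=p_g(A)$, and \lemref{l:(-1)} supplies $h^1(\cO_X(E_1))=p_g(A)$. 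This is the missing idea; none of the ingredients you assembled (\thmref{t:sg}, \corref{t:stable}) produce the needed inclusion without it.
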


\begin{proof}
Note that $Z'=\phi^*\phi_*Z$ and
$\phi_*\cO_X(-Z')=\cO_{X'}(-\phi_*Z)$.
Thus $\cO_X(-Z')$ is generated if and only if so is
$\cO_{X'}(-\phi_*Z)$.
From the spectral sequence, we have that
\[
h^1(\cO_{X'}(-\phi_*Z))=h^1(\cO_X(-Z')).
\]
Therefore
the conditions (1) and (2) are equivalent.
Consider the exact sequence
\[
0\to \cO_X(-Z)\to \cO_X(-Z')\to \cO_{aE_1}(-Z')=\cO_{aE_1}\to 0.
\]
If (1) is satisfied, then the natural homomorphism
$\alpha\: H^0(\cO_X(-Z'))\to H^0(\cO_{aE_1})$ is surjective,
and (3) holds.
If $a=1$, the following three conditions are equivalent:
\begin{itemize}
\item $I\subsetneqq I_{Z'}$;
\item $\alpha$ is surjective;
\item $h^1(\cO_X(-Z))=h^1(\cO_X(-Z'))$.
\end{itemize}
Since the non-triviality of $\alpha$ implies that
$\cO_X(-Z')$ has no fixed components in $E$, $(3)$ implies $(1)$.
\par
Assume that $Z'$ is a $p_g$-cycle on $X$.
Then $Z+Z'$ is also a $p_g$-cycle on $X$
by Theorem \ref{t:sg}.
The following sequence is obtained
from (\ref{Secondeq}).
\[
0\to \cO_X(E_1)\to \cO_X(-Z')^2\to \cO_X(-Z-Z')\to 0.
\]
By Lemma \ref{l:(-1)}, $H^0(\cO_X(-Z')^2) 
\to H^0(\cO_X(-Z-Z'))$ is
surjective.
Thus $\overline{II_{Z'}}=QI_{Z'}$. This implies
$I_{Z'}\subset Q:I$.
\par
If $(A,\m)$ is rational, then $Z'$ is a $p_g$-cycle
from Example \ref{Lipman}.
\end{proof}

\begin{ex} 
\label{237}
Assume that $A=k[[x,y,z]]/(x^2+y^3+z^7)$ and $X'$ is
the minimal good resolution.
Then the exceptional set $E'=E'_0+\cdots+E'_3$ is star-shaped and all $E_i$ are rational curves.
Suppose that $E_0'$ is the central curve and $(-E_0'^2,\dots, -E_3'^2)=(1,2,3,7)$.
Then the canonical cycle on $X'$ is $Z_{K_{X'}}=E'_0+E'$.
Let $Z_0$ be the fundamental cycle on $X'$.
Then $\cO_{X'}(-Z_0)$ has no fixed components in $E'$,
and has a base point $p\in E_3'\setminus E_0'$.
Let $\phi\: X\to X'$ be the blowing-up of the point $p$ and $E_4$
the exceptional set, and let
$Z'=\phi^*Z_0$ and $Z=Z'+E_4$.
Then $Z_{K_X}=\phi_*^{-1}Z_{K_{X'}}$ and $\cO_{Z_{K_X}}(-Z)\cong
\cO_{Z_{K_X}}$.
Since $\cO_{X}(-Z)$ is generated and
$h^1(\cO_{X}(-Z)\ge h^1(\cO_{Z_X})=p_g$, $Z$ is a $p_g$-cycle
and $I_Z$ is good by Proposition \ref{p:Ggood}.
However, $I_Z=I_{Z'}$.
\end{ex}

\par
In what follows, we prove $(2) \Longrightarrow (1)$ in the theorem.

\begin{lem}
\label{l:D12}
Assume that $(A,\m)$ is rational and that
$f\:X \to \spec(A)$ is minimal.
Let $D_1$ and $D_2$ be effective cycles.
Suppose that they have no common irreducible components
and $D_1\ne 0$.
Then $H^1(\cO_X(D_1-D_2))\ne 0$.
\end{lem}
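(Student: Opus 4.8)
The claim is that on a minimal resolution $X$ of a two-dimensional rational singularity, for effective cycles $D_1, D_2$ with no common components and $D_1 \ne 0$, one has $H^1(\cO_X(D_1 - D_2)) \ne 0$. First I would reduce to the case $D_2 = 0$ by a filtration/induction argument: if $D_2 \ne 0$, pick a component $E_i \le D_2$ and use the exact sequence
\[
0 \to \cO_X(D_1 - D_2) \to \cO_X(D_1 - D_2 + E_i) \to \cO_{E_i}(D_1 - D_2 + E_i) \to 0.
\]
Since $X$ is minimal, $E_i^2 \le -2$, and because $D_1$ and $D_2$ share no components while $E_i \le D_2$, the degree $\deg_{E_i}\cO_X(D_1 - D_2 + E_i) = (D_1 - D_2 + E_i)E_i$ should be forced to be negative (the $D_1 E_i \ge 0$ contribution is outweighed by $(E_i - D_2)E_i \le E_i^2 - (\text{rest of }D_2)E_i$, using that the remaining part of $D_2$ meets $E_i$ nonnegatively and $E_i^2 \le -2$); one must be slightly careful when $E_i$ appears in $D_2$ with multiplicity $\ge 2$, but shrinking the multiplicity one step at a time keeps $-D_2 + E_i$ still $\le 0$ with the relevant component and the same sign analysis goes through. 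Then $H^1(\cO_{E_i}(\cdots)) = H^1(\PP^1(\le -1)) = 0$, so $H^1(\cO_X(D_1-D_2)) \twoheadrightarrow$ is not quite what we get — rather $H^1(\cO_X(D_1 - D_2)) \hookrightarrow H^1(\cO_X(D_1 - D_2 + E_i))$, i.e. nonvanishing propagates upward, and by induction on $\deg D_2$ we are reduced to showing $H^1(\cO_X(D_1)) \ne 0$ for $D_1 > 0$.

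For the base case $D_1 > 0$, $D_2 = 0$, I would argue by induction on the coefficients of $D_1$, peeling off one component at a time. Write $D_1 = D_1' + E_j$ with $D_1' \ge 0$ and consider
\[
0 \to \cO_X(D_1') \to \cO_X(D_1) \to \cO_{E_j}(D_1) \to 0.
\]
The degree $(D_1' + E_j)E_j$: if this is $\ge -1$ then $H^1(\cO_{E_j}(D_1)) = 0$ and we would need $H^1(\cO_X(D_1')) \ne 0$, which either follows by induction (if $D_1' \ne 0$) or, if $D_1' = 0$, amounts to $H^1(\cO_X) \ne 0$ — but $A$ is rational, so $H^1(\cO_X) = 0$, and this is a genuine problem, meaning the naive induction needs the right ordering. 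The fix is to choose the peeling order so that at each stage the component $E_j$ removed satisfies $D_1 E_j < 0$ — such a component exists because $D_1 > 0$ is a cycle on a negative-definite graph, so $D_1$ cannot be anti-nef (anti-nef would force $\cO_X(-D_1)$ structure; more directly, $\sum D_1 E_j$ weighted appropriately is $D_1^2 < 0$, so some $D_1 E_j < 0$). With $D_1 E_j \le -1$, pick instead the dual sequence: set $D_1 = D_1' + E_j$ and observe $\deg_{E_j}\cO_X(-D_1') \otimes \cO_{E_j} $... — cleaner is to use $0 \to \cO_X(D_1 - E_j) \to \cO_X(D_1) \to \cO_{E_j}(D_1) \to 0$ with $D_1 E_j < 0$, so $\deg_{E_j} \cO_{E_j}(D_1) \le -1$, hence $H^0(\cO_{E_j}(D_1)) = 0$ and $H^1(\cO_{E_j}(D_1)) \ne 0$ when $D_1 E_j \le -2$, or $=0$ when $D_1 E_j = -1$; in the latter case $H^1(\cO_X(D_1 - E_j)) \cong H^1(\cO_X(D_1))$ and we recurse on $D_1 - E_j$, which still dominates the (nonzero) original support appropriately. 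Iterating, we either hit a stage where the restriction has $H^1 \ne 0$ and it injects downward/surjects to give nonvanishing, or we reduce all the way — and the termination plus the negative-definiteness must be packaged to guarantee we hit the $H^1 \ne 0$ stage before reaching $\cO_X$.

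The main obstacle, and where I expect the real work to lie, is getting the bookkeeping of the induction exactly right so that the "peeling" never reaches $\cO_X$ (where $H^1 = 0$ by rationality) without first encountering a quotient term with nonzero $H^1$. The clean way to organize this is probably to fix a component $E_j$ with $D_1 E_j < 0$ and use $0 \to \cO_X(D_1 - E_j) \to \cO_X(D_1) \to \cO_{E_j}(D_1) \to 0$: when $(D_1)E_j \le -2$, $H^1(\cO_{E_j}(D_1)) \ne 0$ and, since $H^0(\cO_{E_j}(D_1)) = 0$, the connecting-map analysis forces $H^1(\cO_X(D_1)) \ne 0$ directly (the map $H^1(\cO_X(D_1-E_j)) \to H^1(\cO_X(D_1))$ is injective, but more to the point the surjection $H^1(\cO_X(D_1)) \to H^1(\cO_{E_j}(D_1))$ from $H^1(\cO_{E_j}) = 0$... actually one needs $H^2 = 0$, true on a surface, so the last map in the long exact sequence is surjective onto $H^1(\cO_{E_j}(D_1)) \ne 0$) — done. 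When $(D_1)E_j = -1$, we get $H^1(\cO_X(D_1-E_j)) \xrightarrow{\sim} H^1(\cO_X(D_1))$ and replace $D_1$ by $D_1 - E_j$; this strictly decreases $D_1^2$ toward... no, it must strictly decrease some nonnegative integer invariant, say $\deg D_1$, while keeping $D_1 > 0$ — but one must check $D_1 - E_j > 0$ still holds, which can fail if $E_j$ was the last copy. Handling that boundary case (when $D_1 - E_j$ drops a component or becomes $0$) is the subtle point: if $D_1 = E_j$ itself with $E_j^2 = -1$ then $X$ isn't minimal (contradiction), and if $E_j^2 \le -2$ then $D_1 E_j = E_j^2 \le -2$ puts us in the first case. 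So the recursion is well-founded, and I would write it as induction on $\deg D_1$ with these two cases, invoking minimality precisely to exclude $(-1)$-curves and hence to guarantee every irreducible $E_j$ has $E_j^2 \le -2$.
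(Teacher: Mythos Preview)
Your reduction from arbitrary $D_2$ to $D_2=0$ has a genuine gap, and it is a direction-of-implication error rather than a bookkeeping issue. From the sequence
\[
0 \to \cO_X(D_1-D_2) \to \cO_X(D_1-D_2+E_i) \to \cO_{E_i}(D_1-D_2+E_i) \to 0
\]
you want to conclude that nonvanishing of $H^1(\cO_X(D_1-D_2+E_i))$ forces nonvanishing of $H^1(\cO_X(D_1-D_2))$; for that you need the map on $H^1$ to be \emph{surjective}, i.e.\ $H^1(\cO_{E_i}(D_1-D_2+E_i))=0$, i.e.\ degree $\ge -1$. You instead argue the degree is $\le -1$ (and even that sign claim is not justified: with $D_2=mE_i+D_2'$ one gets $(D_1-D_2+E_i)E_i = D_1E_i -(m-1)E_i^2 - D_2'E_i$, which has no definite sign). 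A degree $\le -1$ gives $H^0=0$, hence \emph{injectivity} of the $H^1$ map, which propagates nonvanishing the wrong way. The fix is to choose $E_i\le D_2$ with $(D_2-E_i)E_i\le 1$; such $E_i$ exists by Artin's rationality criterion (otherwise $Z^2+K_XZ\ge 0$), and then $(D_1-D_2+E_i)E_i \ge -1$ as needed. The paper avoids the peeling entirely: it uses the single sequence $0\to \cO_X(D_1-D_2)\to \cO_X(D_1)\to \cO_{D_2}(D_1)\to 0$ and the fact that $\cO_{D_2}(D_1)$ is nef on its support (since $D_1,D_2$ share no components), whence $H^1(\cO_{D_2}(D_1))=0$ by the standard vanishing on rational configurations.

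Your base case $H^1(\cO_X(D_1))\ne 0$ is correct once disentangled: pick $E_j\le D_1$ with $D_1E_j<0$; if $D_1E_j\le -2$ the surjection $H^1(\cO_X(D_1))\twoheadrightarrow H^1(\cO_{E_j}(D_1))\ne 0$ finishes it, and if $D_1E_j=-1$ then $H^1(\cO_X(D_1))\cong H^1(\cO_X(D_1-E_j))$ with $D_1-E_j>0$ (the case $D_1=E_j$ would give $E_j^2=-1$, excluded by minimality), so induction on $\deg D_1$ terminates. The paper's argument here is shorter: from $H^0(\cO_{D_1}(D_1))=0$ (Wahl) one reads off $h^1(\cO_{D_1}(D_1)) = (K_XD_1 - D_1^2)/2$, which is strictly positive since $K_XE_i\ge 0$ on the minimal resolution and $D_1^2<0$; then $H^1(\cO_X(D_1))\twoheadrightarrow H^1(\cO_{D_1}(D_1))$ gives the result.
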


\begin{proof}
Consider the exact sequence
\[
0\to \cO_X(D_1-D_2) \to \cO_X(D_1) \to \cO_{D_2}(D_1) \to 0.
\]
Since $\cO_{D_2}(D_1)$ is nef on its support, $H^1(\cO_{D_2}(D_1))=0$.
Therefore, it suffices to show that $H^1(\cO_X(D_1))\ne 0$.
Since $H^0(\cO_{D_1}(D_1))=0$ (Wahl \cite[(2.2)]{wahl.va}), we have that
\[
h^1(\cO_{D_1}(D_1))=-\chi(\cO_{D_1}(D_1))=
(K_XD_1-D_1^2)/2.
\]
Since $X$ is minimal and $D_1\ne 0$, we have
$h^1(\cO_{D_1}(D_1))\ne 0$.
Therefore
$h^1(\cO_X(D_1))\ge h^1(\cO_{D_1}(D_1))\ge 1$.
\end{proof}

Assume that $A$ is a rational singularity.
Let $Z>0$ be an anti-nef cycle and $I=H^0(\cO_X(-Z))$.
Let $Q$ be a minimal reduction of $I$.
From the Koszul complex associated with generators of $Q$ (cf.(2.3)),
we obtain the exact sequence
\begin{equation}
\label{eq:Kos}
0 \to \cO_X(Z) \to \cO_X^2\to \cO_X(-Z) \to 0.
\end{equation}
This implies the exact sequence
\begin{equation}
\label{eq:QI}
0\to Q \to I \to H^1(\cO_X(Z))\to 0.
\end{equation}

The next lemma holds without rationality of the singularity.

\begin{lem}\label{l:core}
Assume that $I^2=QI$. Then 
$$
\core(I)=Q^2:I=(Q:I)I=(Q:I)Q.
$$
\end{lem}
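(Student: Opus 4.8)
The plan is to prove the chain of equalities
\[
\core(I)=Q^2:I=(Q:I)I=(Q:I)Q
\]
under the hypothesis $I^2=QI$, working with a minimal reduction $Q$ of the $\m$-primary ideal $I$ in the Cohen--Macaulay local ring $A$ (no normality or rationality is used). The key algebraic inputs are the stability relation $I^2=QI$ — which forces $I^n=Q^{n-1}I$ for all $n\ge 1$ by induction — and the standard fact that $\core(I)$ can be computed via large powers: since $Q$ is a reduction, $\core(I)=Q^{n+1}:I^n$ for $n\gg 0$ (this is the Corso--Polini--Ulrich type description of the core referenced earlier in Subsection~2.5 via \cite{CPU}, valid here because $A$ is Cohen--Macaulay of dimension $d$ and $Q$ is generated by $d$ elements forming a system of parameters).

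First I would establish $(Q:I)Q=(Q:I)I$. The inclusion $(Q:I)Q\subseteq(Q:I)I$ is clear since $Q\subseteq I$. For the reverse, note that $I=Q+xI$ is false in general, but we do have $I^2=QI$, so for any $a\in Q:I$ and $z\in I$ we want $az\in(Q:I)Q$; the point is that $(Q:I)I^2=(Q:I)QI\subseteq Q\cdot I\subseteq \ldots$ — more cleanly, one shows $I\cdot(Q:I)\subseteq Q$ directly from the definition of $Q:I$, hence $(Q:I)I\subseteq Q\cap (Q:I)I$, and then uses that $(Q:I)I$ is an ideal contained in $Q$ on which one can run the determinant trick: every element of $(Q:I)I$ lies in $Q$, and multiplying the relation $I^2=QI$ by $Q:I$ gives $(Q:I)I^2=(Q:I)QI=Q\cdot(Q:I)I$, so $(Q:I)I$ is integral over $Q$; since $(Q:I)I\subseteq Q$ already, one deduces $(Q:I)I=(Q:I)I\cap Q$ and with the stability of $Q$ (a parameter ideal, hence $Q$ is its own integral closure relative to itself in the relevant sense, or use Nakayama on the finitely generated module) concludes $(Q:I)I=(Q:I)Q$. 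I expect this determinant-trick/Nakayama step to require the most care to state precisely.

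Next I would handle $Q^2:I=(Q:I)I$. The inclusion $(Q:I)I\subseteq Q^2:I$: if $a\in Q:I$ and $z\in I$, then $(az)I\subseteq aI^2=aQI=Q(aI)\subseteq Q\cdot Q=Q^2$ — wait, one needs $aI\subseteq Q$, which is exactly $a\in Q:I$ — so $(az)I\subseteq Q(aI)\subseteq Q\cdot Q=Q^2$, giving $az\in Q^2:I$. For the reverse inclusion $Q^2:I\subseteq(Q:I)I$: take $b\in Q^2:I$, so $bI\subseteq Q^2$; since $Q=(a_1,\dots,a_d)$ with the $a_i$ a regular sequence, write $bz=\sum a_ia_jc_{ij}$ and use the Cohen--Macaulay/regular-sequence relations (the second symmetric power of a regular sequence behaves well) together with $I^2=QI$ to pull a factor of $I$ out. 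Concretely, $bI\subseteq Q^2$ and $Q^2=QI\cap Q^2$... the clean route is: $bI\subseteq Q^2$ implies $bI^2\subseteq Q^2 I$, and $Q^2:Q=Q$ (regular sequence), so one shows $b\in (Q:I)I$ by descending induction on the number of generators or by a direct module-theoretic argument using that $I/Q$ and the colon ideals fit into the exact sequences available.

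Finally I would assemble $\core(I)=Q^2:I$. Using $\core(I)=Q^{n+1}:I^n$ for $n\gg0$ and $I^n=Q^{n-1}I$, compute $Q^{n+1}:I^n=Q^{n+1}:(Q^{n-1}I)=(Q^{n+1}:Q^{n-1}):I=Q^2:I$, where the middle step uses that $Q^{n-1}$ is generated by a regular sequence's powers so $Q^{n+1}:Q^{n-1}=Q^2$ — this last colon computation is the one genuinely CM-flavored ingredient and is standard for parameter ideals, but I would double-check it (it follows from $Q$ being generated by a regular sequence, via the Artin--Rees / associated graded ring being a polynomial ring over $A/Q$). The main obstacle overall is getting the colon-and-power bookkeeping exactly right — none of the individual steps is deep, but the equalities $(Q:I)I=(Q:I)Q$ and $Q^2:I=(Q:I)I$ each hinge on correctly exploiting $I^2=QI$ together with the regular-sequence structure of $Q$, and it is easy to state an inclusion in the wrong direction.
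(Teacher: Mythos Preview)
Your proposal has genuine gaps in the two places you yourself flag as uncertain: the inclusion $(Q:I)I\subseteq(Q:I)Q$ and the inclusion $Q^2:I\subseteq(Q:I)I$. The determinant-trick/Nakayama sketch does not actually produce $(Q:I)I\subseteq(Q:I)Q$: from $(Q:I)I\cdot I=(Q:I)I^2=(Q:I)QI=Q\cdot(Q:I)I$ you only obtain $J\cdot I=J\cdot Q$ for $J=(Q:I)I$, which says nothing about $J$ versus $(Q:I)Q$; and ``$J$ is integral over $Q$'' is vacuous since already $J\subseteq Q$. Likewise your sketch for $Q^2:I\subseteq(Q:I)I$ never reaches a conclusion.

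The paper sidesteps both difficulties by proving a \emph{single} nontrivial inclusion, $Q^2:I\subseteq(Q:I)Q$, and then closing the cycle
\[
(Q:I)Q\ \subseteq\ (Q:I)I\ \subseteq\ Q^2:I\ \subseteq\ (Q:I)Q,
\]
where the first inclusion is trivial and the second is exactly your (correct) argument $(az)I\subseteq Q(aI)\subseteq Q^2$. For the key inclusion, take $x\in Q^2:I$; since $\core(I)=Q^2:I$ (which the paper obtains from Goto--Shimoda plus Hyry--Smith, rather than your CPU/colon-power route, but either citation works) and $\core(I)\subseteq Q$, one can write $x=af+bg$ with $Q=(f,g)$. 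Then for any $h\in I$ the relation $(af+bg)h=c_1f^2+c_2fg+c_3g^2$ together with the regularity of the sequence $f,g$ forces $ah-c_1f\in(g)$ and $bh-c_3g\in(f)$, hence $ah,bh\in Q$ and $a,b\in Q:I$, so $x\in(Q:I)Q$. Note that this step uses the two-dimensional setting (two generators of $Q$) explicitly; your attempt to work in arbitrary dimension $d$ is not what is needed here and only made the missing inclusions harder to pin down.
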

\begin{proof}
By Goto--Shimoda \cite{Goto-Shimoda}, the Rees algebra $\cR(I) = \oplus_{n\ge 0} I^n t^n$ is Cohen-Macaulay.
Therefore it follows from Corollary 5.1.1 and Remark 5.1.2 of 
Hyry--Smith \cite{Hyry-Smith} that $\core(I)=Q^2:I$.
Let $x=\sum_{i=1}^ma_ih_i\in (Q:I)I$, where $a_i\in Q:I$ and $h_i\in I$.
Then $xI\subset \sum a_iI^2=\sum a_iIQ\subset Q^2$. 
Thus $(Q:I)I\subset Q^2:I$.
Conversely assume that $x\in Q^2:I$.
Suppose that $Q$ is generated by $f$ and $g$.
Since $\core(I)\subset Q$, there exist $a,b\in A$ such that $x=af+bg$.
For any $h\in I$, there exist $c_1,c_2,c_3\in A$ such that
$(af+bg)h=c_1 f^2+c_2 fg+c_3 g^2$. Since $f,g$ form a regular sequence, 
we have $ah-c_1f\in (g)$ and $bh-c_3g\in (f)$.
Thus $ah,bh\in (f,g)= Q$. This shows that $a,b\in Q:I$.
Hence $x=af+bg\in (Q:I)Q$.
\end{proof}

\begin{prop} 
\label{good_on_min}
Assume that $(A,\m)$ is rational and that
$f\:X \to \spec(A)$ is minimal.
Let $Z > 0$ be an anti-nef cycle on $X$ and $I=I_Z$.
Then $I^2=QI$ and $Q: I=I$.
\end{prop}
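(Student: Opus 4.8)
The plan is to establish the two assertions $I^2 = QI$ and $Q : I = I$ separately, both exploiting rationality and the minimality of $f$. For the stability $I^2 = QI$, I would invoke Corollary~\ref{t:stable} directly: since $A$ is rational, every anti-nef cycle $Z > 0$ is a $p_g$-cycle by Example~\ref{Lipman}, hence $I = I_Z$ is a $p_g$-ideal and $I^2 = QI$ holds for any minimal reduction $Q$. This is the easy half. (Alternatively one could argue via Lipman's classical result that integrally closed ideals on rational singularities are stable.) So the real content is the identity $Q : I = I$.

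For $Q : I = I$, the inclusion $I \subseteq Q : I$ is automatic from stability ($I^2 = QI \subseteq Q$). For the reverse inclusion $Q : I \subseteq I$, I would use the exact sequence~\eqref{eq:QI}, namely $0 \to Q \to I \to H^1(\cO_X(Z)) \to 0$, together with Lemma~\ref{l:core}, which gives $\core(I) = Q^2 : I = (Q:I)Q$ and hence $Q : I = \core(I) : Q$ (using that $f,g$ form a regular sequence, so $Q:I$ is recovered from $(Q:I)Q$). The key point is to show $\ell_A((Q:I)/I) = 0$; equivalently, that the natural inclusion $I/Q \hookrightarrow (Q:I)/Q$ is an equality. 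By the colength count, $\ell_A((Q:I)/Q) = \ell_A(A/I)$ need not hold in the non-Gorenstein case, so one must argue more carefully. The idea is to express $Q : I$ cohomologically: applying $\Hom(-, A)$ or dualizing the sequence~\eqref{eq:Kos}, one identifies $Q : I$ with $H^0$ of an appropriate sheaf on $X$, and then shows the relevant cohomological discrepancy vanishes because $X$ is the \emph{minimal} resolution — this is exactly where Lemma~\ref{l:D12} (with its use of $H^0(\cO_{D_1}(D_1)) = 0$ from Wahl and the minimality hypothesis forcing $h^1(\cO_{D_1}(D_1)) \ne 0$) enters. Concretely, I expect to show that if $x \in Q : I$ but $x \notin I$, then the image of $x$ produces a section giving a fixed component or a nonvanishing $H^1$ contribution supported on a cycle $D_1 > 0$ that Lemma~\ref{l:D12} rules out on a minimal resolution.

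The main obstacle will be making the cohomological identification of $Q : I$ precise and connecting it cleanly to the minimality of $X$. In the Gorenstein case one simply compares lengths via local duality ($\ell_A(A/I) = \ell_A((Q:I)/Q)$), but here the non-Gorenstein setting forces a genuinely geometric argument: one has to track which effective cycles can appear as "defects" and invoke Lemma~\ref{l:D12} to contradict their existence. I would set this up by writing $Q : I = H^0(X, \cO_X(-Z) \otimes \omega_X^{\vee} \otimes \cdots)$-type expression — more precisely, using that $A$ rational implies $\cO_X$ has no higher direct images, so $Q:I$ is computed by $H^0$ of the sheaf-theoretic ideal quotient $(Q\cO_X : I\cO_X)$, which since $I\cO_X = \cO_X(-Z)$ is invertible equals $Q\cO_X \cdot \cO_X(Z)$, and then analyzing $H^0(\cO_X(Z) \otimes Q\cO_X)$ versus $H^0(\cO_X(-Z))$ via the Koszul resolution~\eqref{eq:Kos}. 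The anticipated conclusion is that any strict containment would violate Lemma~\ref{l:D12}, forcing $Q : I = I$ and completing the proof that $I$ is good on the minimal resolution.
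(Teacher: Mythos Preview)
Your outline is headed in the right direction—the ingredients you name (Corollary~\ref{t:stable} for stability, Lemma~\ref{l:core}, Lemma~\ref{l:D12}, the Koszul sequence~\eqref{eq:Kos}) are exactly the ones the paper uses—but the mechanism you propose for the hard inclusion $Q:I\subset I$ has a real gap. Your plan is to identify $Q:I$ with $H^0$ of the sheaf quotient $(Q\cO_X:I\cO_X)$. This fails immediately: since $Q$ is a minimal reduction of $I$ and $\cO_X(-Z)$ is generated, two general elements of $Q$ already generate $\cO_X(-Z)$, so $Q\cO_X=I\cO_X=\cO_X(-Z)$ and hence $(Q\cO_X:I\cO_X)=\cO_X$, whose $H^0$ is $A$, not $Q:I$. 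Taking global sections and forming colon ideals do not commute here, and no amount of rationality fixes this. Your alternative suggestion of recovering $Q:I$ from $\core(I)=(Q:I)Q$ via ``$f,g$ regular'' is also not a computation: it is circular unless you already control $Q:I$.

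The paper's argument avoids any global description of $Q:I$ and instead works one element at a time. Take a general $h\in Q:I$ and let $Z_h$ be the exceptional part of $\di_X(h)$; the goal is $Z_h\ge Z$. Multiplication by $h$ gives an exact sequence $0\to\cO_X(Z)\to\cO_X(Z-Z_h)\to\cC\to 0$ with $\cC$ supported on an affine curve, so the induced map $h_1\colon H^1(\cO_X(Z))\to H^1(\cO_X(Z-Z_h))$ is \emph{surjective}. Now twist the Koszul sequence~\eqref{eq:Kos} by $\cO_X(-Z_h)$: since $H^1(\cO_X(-Z_h))=0$, one gets $H^1(\cO_X(Z-Z_h))\cong I_{Z+Z_h}/QI_{Z_h}$, and under this identification $h_1$ becomes the map $I/Q\to I_{Z+Z_h}/QI_{Z_h}$ given by multiplication by $h$. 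But Lemma~\ref{l:core} yields $hI\subset (Q:I)I=(Q:I)Q\subset I_{Z_h}Q$, so $h_1=0$. Hence $H^1(\cO_X(Z-Z_h))=0$, and Lemma~\ref{l:D12} (applied to the positive and negative parts of $Z-Z_h$) forces $Z-Z_h\le 0$. This is the missing idea: Lemma~\ref{l:core} is used not to compute $Q:I$, but to show that a specific cohomology map vanishes, and minimality enters only at the very end via Lemma~\ref{l:D12}.
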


\begin{proof}
By Proposition 
3.6, it suffices to show that $Q: I\subset I$.
Let $h\in Q: I$ and $Z_h=\di(f^*h)-f_*^{-1}(\di(h))$ (the exceptional
part of the divisor of $h$ on $X$).
We have to show that $Z_h\ge Z$.
To this end, we may assume that $h$ is a general element of $Q: I$
so that $Q:I\subset I_{Z_h}$.
From the exact sequence
\[
0\to \cO_X(Z) \xrightarrow{\times h} \cO_X(Z-Z_h)\to \cC \to 0
\]
obtained from (2.2), 
 we obtain the surjective map
\[
h_1\: H^1(\cO_X(Z))\xrightarrow{\times h} H^1(\cO_X(Z-Z_h)).
\]
We shall show that $h_1$ is trivial. 
Since $H^1(\cO_X(-Z_h))=0$, tensoring the exact sequence \eqref{eq:Kos} with $\cO_X(-Z_h)$ we obtain the exact sequence
$$
0\to QI_{Z_h} \to I_{Z+Z_h} \to H^1(\cO_X(Z-Z_h))\to 0.
$$
Therefore we may regard $h_1$ as a homomorphism
$$
 h_1\: I/Q \xrightarrow{\times h} I_{Z+Z_h}/QI_{Z_h}.
$$
However it follows from \lemref{l:core} that
$hI\subset (Q:I)Q \subset I_{Z_h}Q$.
Hence the map $h_1$ should be trivial.
By 
Lemma 5.5, we obtain that $Z-Z_h\le 0$.
\end{proof}

%
%
%
%

\section{Number of minimal generators of integrally closed ideals.}

The aim of this section is to study the number
of minimal set of generators for integrally closed ideals in $A$.
In what follows, let $M$ denote the maximal ideal cycle of
a given resolution of singularities $f \colon X \to \Spec A$;
see \cite[Definition 2.11]{yau.max}.
\par
Furthermore, we always assume that an integrally closed $\m$-primary
ideal $I=I_Z$ is represented by $Z$.

\begin{thm} 
\label{mu(I)}
Let $(A,\m)$ and $f \colon X \to \Spec A$ be as above.
Let $I=I_Z$ be an integrally closed $\m$-primary ideal and 
assume that $\frm \cO_X=\cO_X(-M)$.
Then we have an inequality
\[
- MZ +1 \ge \mu_A(I) \ge -MZ + 1 -p_g(A).
\]
More precisely, we have 
\[
\ell_A( I/\overline{I\frm}) = - MZ + 1 -
\varepsilon(Z,M)
\]
and we have equality $\mu_A(I) = -MZ +1$
if 
 $\overline{I\frm} = fI + g\frm$ for general elements $f\in \frm$ and $g\in I$.
\end{thm}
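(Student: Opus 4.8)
The plan is to compute $\ell_A(I/\overline{I\frm})$ directly via Kato's Riemann–Roch formula (\thmref{t:kato}) and the $\varepsilon$-machinery of \proref{vareps}, then read off the bounds on $\mu_A(I)$. First I would observe that since $\frm = H^0(\cO_X(-M))$ and $I = H^0(\cO_X(-Z))$ with $\frm\cO_X = \cO_X(-M)$ generated, the product ideal $I\frm$ is represented (up to integral closure) by the cycle $Z + M$: precisely, $\overline{I\frm} = \overline{I_Z I_M} = I_{Z+M} = H^0(\cO_X(-Z-M))$, using that the integral closure of a product of integrally closed ideals represented on $X$ is represented by the sum of the cycles (this is the observation recorded in subsection 2.3, $\overline{I^n} = I_{nZ}$, applied to the mixed product; alternatively it follows because $\cO_X(-Z)\otimes\cO_X(-M)$ is generated, being a tensor product of generated invertible sheaves). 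Hence $\ell_A(I/\overline{I\frm}) = \ell_A(I_Z/I_{Z+M}) = \ell_A(A/I_{Z+M}) - \ell_A(A/I_Z)$.

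Next I would apply \thmref{t:kato} to both $Z$ and $Z+M$:
\[
\ell_A(A/I_Z) + h^1(\cO_X(-Z)) = -\frac{Z^2 + K_X Z}{2} + p_g(A),
\]
\[
\ell_A(A/I_{Z+M}) + h^1(\cO_X(-Z-M)) = -\frac{(Z+M)^2 + K_X(Z+M)}{2} + p_g(A).
\]
Subtracting gives
\[
\ell_A(I/\overline{I\frm}) = -\frac{(Z+M)^2 - Z^2}{2} - \frac{K_X M}{2} + h^1(\cO_X(-Z)) - h^1(\cO_X(-Z-M)).
\]
Now $(Z+M)^2 - Z^2 = 2ZM + M^2$, so the geometric part becomes $-ZM - \frac{M^2 + K_X M}{2}$. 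Applying \thmref{t:kato} once more to the cycle $M$ alone (with $I_M = \frm$, $h^0(\cO_M) $ etc.) gives $-\frac{M^2 + K_X M}{2} = \ell_A(A/\frm) + h^1(\cO_X(-M)) - p_g(A) = 1 + h^1(\cO_X(-M)) - p_g(A)$, since $\ell_A(A/\frm) = 1$. Substituting, and then invoking \proref{vareps}(2), namely $\varepsilon(Z,M) = p_g(A) - h^1(\cO_X(-Z)) - h^1(\cO_X(-M)) + h^1(\cO_X(-Z-M))$, the $h^1$ terms collapse exactly into $-\varepsilon(Z,M)$, yielding
\[
\ell_A(I/\overline{I\frm}) = -MZ + 1 - \varepsilon(Z,M),
\]
which is the asserted equality.

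For the inequalities, I would use \proref{vareps}(1), $0 \le \varepsilon(Z,M) \le p_g(A)$: since $\mu_A(I) = \ell_A(I/\frm I) \ge \ell_A(I/\overline{I\frm})$ always, and $\overline{I\frm} \supseteq \frm I$, we get $\mu_A(I) \ge -MZ + 1 - \varepsilon(Z,M) \ge -MZ + 1 - p_g(A)$. For the upper bound I would argue that in fact $\mu_A(I) = \ell_A(I/\overline{I\frm})$ precisely when $\overline{I\frm} = \frm I$ — but the theorem phrases the sufficient condition differently, as $\overline{I\frm} = fI + g\frm$ for general $f\in\frm$, $g\in I$. In that case $\overline{I\frm}$ is generated by $\mu_A(I) - 1 + \mu_A(\frm) - 1$... rather, one checks directly that $fI + g\frm \subseteq \frm I$ when $f\in\frm$ (clear) and $g\in I$ (clear), so $fI + g\frm \subseteq \frm I \subseteq \overline{I\frm} = fI + g\frm$ forces $\frm I = \overline{I\frm}$, hence $\mu_A(I) = \ell_A(I/\frm I) = -MZ + 1 - \varepsilon(Z,M)$, and combined with $\varepsilon(Z,M) \ge 0$ this gives $\mu_A(I) \le -MZ + 1$; moreover equality $\mu_A(I) = -MZ+1$ holds exactly when additionally $\varepsilon(Z,M) = 0$. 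I expect the main obstacle to be the bookkeeping identifying $\overline{I\frm}$ with $I_{Z+M}$ cleanly (one must be careful that $I\frm$ need not itself be represented, only its integral closure is) and verifying that the stated sufficient condition on general elements indeed forces $\frm I = \overline{I\frm}$ rather than merely $\frm I \subseteq \overline{I\frm}$; the Riemann–Roch computation itself is mechanical once the right cycles are in hand.
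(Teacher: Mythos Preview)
Your approach is essentially identical to the paper's: apply Kato's Riemann--Roch to the three cycles $Z$, $M$, $Z+M$ and recognize the resulting combination of $h^1$-terms as $-\varepsilon(Z,M)$ via \proref{vareps}. The formula $\ell_A(I/\overline{I\m}) = -MZ+1-\varepsilon(Z,M)$, the lower bound on $\mu_A(I)$, and the equality case are all handled correctly and in the same way as the paper.

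There is one genuine gap: you do not establish the \emph{unconditional} upper bound $\mu_A(I)\le -MZ+1$. You only derive it under the extra hypothesis $\overline{I\m}=fI+g\m$, whereas the theorem asserts it in general. The missing step is short: since $fI+g\m\subseteq \m I\subseteq \overline{I\m}$ always (you already observe the first inclusion), one has
\[
\ell_A(\overline{I\m}/\m I)\;\le\;\ell_A\bigl(\overline{I\m}/(fI+g\m)\bigr)\;=\;\varepsilon(Z,M),
\]
and therefore
\[
\mu_A(I)=\ell_A(I/\m I)=\ell_A(I/\overline{I\m})+\ell_A(\overline{I\m}/\m I)\le\bigl(-MZ+1-\varepsilon(Z,M)\bigr)+\varepsilon(Z,M)=-MZ+1.
\]
The paper's proof is equally terse here (its ``hence the theorem follows from \proref{vareps}'' leaves this implicit), but you should make the inequality $\ell_A(\overline{I\m}/\m I)\le\varepsilon(Z,M)$ explicit. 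One minor remark: under the stated condition $\overline{I\m}=fI+g\m$, the vanishing $\varepsilon(Z,M)=0$ is not ``additional'' --- it is exactly the definition of $\varepsilon(Z,M)$ --- so your last clause is redundant.
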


\begin{proof}
In the proof, we write $h^1(-Z)$ instead of $h^1(\mathcal{O}_X(-Z))$
for any anti-nef cycle $Z$ on $X$.
Note that $\mu_A(I) = \ell_A(I/I \m) \ge \ell_A(I/ \overline{I \m})$ and
$\overline{I \m}= I_{M+Z}$.
By Riemann-Roch formula \ref{t:kato}, we have
\begin{eqnarray*}
\ell_A(A/ \overline{I \m}) & = & p_g(A)-h^1(-M-Z) - \frac{(M+Z)^2
+K_X(M+Z)}{2}, \\[2mm]
\ell_A(A/I) & = & p_g(A) - h^1(-Z) - \frac{Z^2 +K_XZ}{2}, \\[2mm]
1=\ell_A(A/ \m) & = & p_g(A) - h^1(-M) -
\frac{M^2 +K_XM}{2}.
\end{eqnarray*}
It follows that
\begin{eqnarray*}
\ell_A(I/ \overline{I \m})
&=& \ell_A(A/\overline{I \m}) - \ell_A(A/I) - \ell_A(A/\m)+1 \\[2mm]
&=& - MZ + 1 - \left\{p_g(A)- h^1(-M)- h^1(-Z)+h^1(-Z-M) \right\} \\
&=& -MZ+1 - \varepsilon(Z,M).
\end{eqnarray*}
Hence the theorem follows from Proposition \ref{vareps}.
\end{proof}

\begin{ex}
\label{mu(I)=}
If $I_Z$ or $\frm$ is a $p_g$-ideal, then  $\mu_A(I_Z) = -MZ +1$.
Actually, we get the equalities $\overline{I\m}= I \m = f I_Z + g \frm$ by Theorem \ref{t:sg}.
\end{ex}

\begin{cor}
\label{mu(I)_ineq}
If $A$ is a rational singularity and $I$ is an integrally
closed $\frm$-primary ideal, then $\mu_A(I) = -MZ +1$.
\end{cor}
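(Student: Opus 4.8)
The plan is to derive the equality directly from \thmref{mu(I)}, the essential point being that on a rational singularity every integrally closed $\frm$-primary ideal is a $p_g$-ideal, so that the error term $\varepsilon(Z,M)$ vanishes and the integral closures appearing in \thmref{mu(I)} can be removed.

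First I would choose a resolution $f\colon X\to\Spec A$ on which both $I$ and $\frm$ are represented, writing $I=I_Z$ and $\frm\cO_X=\cO_X(-M)$ for the maximal ideal cycle $M$; this is harmless because both ideals are integrally closed, and enlarging the resolution changes neither $\mu_A(I)$ nor the intersection number $-MZ$, the relevant cycles being pulled back. On such an $X$ the hypotheses of \thmref{mu(I)} hold, so
\[
\ell_A\bigl(I/\overline{I\frm}\bigr)=-MZ+1-\varepsilon(Z,M).
\]
Since $A$ is rational, $p_g(A)=0$, whence $\varepsilon(Z,M)=0$ by \proref{vareps}(1), and therefore $\ell_A(I/\overline{I\frm})=-MZ+1$.

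It remains to see that $\overline{I\frm}=I\frm$. Here rationality enters again: $M$ is anti-nef (being the cycle of the globally generated invertible sheaf $\frm\cO_X$) and nonzero, so by \exref{Lipman} it is a $p_g$-cycle and $\frm=I_M$ is a $p_g$-ideal. Applying \thmref{t:sg} to the $p_g$-cycle $M$ and the cycle $Z$ (for which $\cO_X(-Z)$ is generated) gives $\overline{I\frm}=I_{Z+M}=f I_Z+f' I_M\subseteq I\frm$ for general $f\in\frm$ and $f'\in I$, so indeed $\overline{I\frm}=I\frm$ and $\mu_A(I)=\ell_A(I/I\frm)=-MZ+1$. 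Equivalently, once $\frm$ is known to be a $p_g$-ideal one may simply invoke \exref{mu(I)=}, which records precisely this computation, or appeal to Lipman's theorem that a product of integrally closed ideals on a rational singularity is integrally closed. The only step needing a little care is the reduction in the second paragraph --- arranging a single resolution representing both $\frm$ and $I$ so that \thmref{mu(I)} applies and $-MZ$ is the intended quantity; past this purely bookkeeping issue there is no real obstacle, since everything else is immediate from the results already established.
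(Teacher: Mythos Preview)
Your argument is correct and follows essentially the route the paper intends: since $A$ is rational, the hypothesis $\frm\cO_X=\cO_X(-M)$ of \thmref{mu(I)} is satisfied on any resolution dominating the minimal one, and then either the squeeze $-MZ+1\ge \mu_A(I)\ge -MZ+1-p_g(A)$ with $p_g(A)=0$, or \exref{mu(I)=} (as you note), gives the result immediately. Your explicit verification that $\varepsilon(Z,M)=0$ and $\overline{I\frm}=I\frm$ simply unpacks what those statements already encode, so nothing is different in substance.
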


\begin{ex} 
\label{elii-mu} 
Assume that the exceptional set of the minimal resolution of
$\Spec A$ consists of
one curve $E\cong \PP^1$ with $E^2=-r$.
Then $\mu_A(I)$ of integrally closed ideal $I$ is of the form
$\mu_A(I) = nr +1$ for some positive integer $n$.
If $A$ is a simple elliptic singularity of
$e_{0}(\m) = r \ge 3$, then
$\mu_A(I) = nr +1$ or $nr$ for every integrally closed ideal $I$.
Actually, if $I= I_Z$ for some
anti-nef cycle $Z$ on $X$ and if we denote $f : X\to X_0$, where $X_0$ is
the minimal resolution,
then $-MZ = - f_*(Z) E = nr$, 
where $E$ is the unique elliptic curve on $X_0$.
\end{ex}

\begin{rem}
\label{rem:mu}
In any  two-dimensional normal local ring $A$, 
if $I'\subset I$ are integrally closed ideals in $A$,
then $\mu_A(I') \ge \mu_A(I)$ holds true
by \cite[Theorems 3,5]{WJ}.
\end{rem}

\section{Ulrich ideals of minimally elliptic and simple elliptic
singularities.}

Let $(A,\m)$ be a Cohen-Macaulay local ring with infinite residue field,
and let $I$ be an $\m$-primary ideal of $A$ and $Q$ a minimal reduction
of $I$.
Then $I$ is called an \textit{Ulrich ideal} if $I^2=QI$ and $I/I^2$
is $A/I$-free. When $A$ is Gorenstein, $I$ is an Ulrich ideal
if and only if it is a good ideal and $\mu_A(I)=\dim A+1$;
see also subsection 2.5.
Thus in the Gorenstein case, we can regard Ulrich ideals as typical
example of good ideals.
In \cite{GOTWY1,GOTWY2,GOTWY3}, the last two authors classified all
Ulrich ideals
for simple singularities and two-dimensional rational singularities.
So the following problem is natural.

\begin{prob}
\label{Prob-Ul}
Let $A$ be a two-dimensional normal local ring.
Classify all Ulrich ideals of $A$.
\end{prob}

In this section, we will prove non-existence theorem for minimally
elliptic singularities
(namely, Gorenstein rings with $p_g(A)=1$) with high multiplicity
and we will complete solution for Problem \ref{Prob-Ul} in the case of
simple elliptic singularities.
Note that in our case, Ulrich ideal $I$ is a good ideal with $\mu_A(I) =3$.

\par
First, by Lemma \ref{good_lem} and Proposition \ref{t:stable}, we have
the following.

\begin{prop}
\label{pg&good} 
 Let $I$ be a good ideal and assume $I \OO_X
= \OO_X( -Z)$ is invertible
with $h^1(\OO_X( -Z))=\pg(A)$. Then $I$ and $I^2$ are integrally closed.
\end{prop}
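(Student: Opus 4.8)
The plan is to promote $Z$ to a $p_g$-cycle, force $I=I_Z:=H^0(\OO_X(-Z))$, and then read off both conclusions from \corref{t:stable}.

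First I would record the structural consequences of the hypothesis. Since $I\OO_X=\OO_X(-Z)$ is an invertible ideal sheaf, it is generated by global sections coming from $I\subseteq H^0(\OO_X(-Z))$; hence $Z\ge 0$ is anti-nef, and $Z\ne 0$ because $I$ is a proper $\m$-primary ideal. Moreover $H^0(\OO_X(-Z))=H^0(I\OO_X)=\overline{I}=:I_Z$ (cf.\ \cite{Li}), so $I\subseteq I_Z$ with $I_Z$ integral over $I$, and $I_Z$ is an integrally closed $\m$-primary ideal. Combined with the assumption $h^1(\OO_X(-Z))=\pg(A)$, this shows that $Z>0$ satisfies the conditions of \defref{pgcycle}, i.e.\ $Z$ is a $p_g$-cycle and $I_Z$ is a $p_g$-ideal (one may also invoke \thmref{t:lepg}(2) here, although generation is already built in).

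Next I would apply \corref{t:stable} to the $p_g$-cycle $Z$: every power $I_Z^n$ is integrally closed, and $I_Z^2=I_ZQ_0$ for any minimal reduction $Q_0$ of $I_Z$. Since $I$ and $I_Z$ have the same integral closure $\overline{I}$, a minimal reduction $Q$ of $I$ is again a minimal reduction of $I_Z$ — it is a parameter ideal contained in $I_Z$ which is a reduction of $\overline{I_Z}=\overline I$ and has $e_0(Q)=e_0(I)=e_0(I_Z)$ — so in particular $I_Z^2=QI_Z$.

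Finally, $I_Z$ contains $I$, is integral over $I$, and satisfies $I_Z^2=QI_Z$; since $I$ is good, \lemref{good_lem} forces $I=I_Z$. Consequently $I=I_Z$ is integrally closed, and $I^2=I_Z^2$ is integrally closed by \corref{t:stable}. The only points that require any care are the identification $H^0(I\OO_X)=\overline I$ and the matching of minimal reductions of $I$ and $I_Z$; beyond that the argument is a direct application of \defref{pgcycle}, \corref{t:stable}, and \lemref{good_lem}, so I do not expect a genuine obstacle.
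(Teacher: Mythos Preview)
Your proposal is correct and follows essentially the same route as the paper: the paper's proof consists only of the sentence ``by \lemref{good_lem} and \corref{t:stable}'' preceding the statement, and your argument is precisely the unpacking of that reference—showing $Z$ is a $p_g$-cycle, applying \corref{t:stable} to get $I_Z^2=QI_Z$ with $I_Z^n$ integrally closed, and then invoking \lemref{good_lem} with $I'=I_Z=\overline{I}$ to force $I=I_Z$. The care you take in identifying $H^0(I\OO_X)=\overline{I}$ and in checking that a minimal reduction of $I$ serves for $I_Z$ is appropriate and fills in exactly the details the paper leaves implicit.
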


\begin{rem}
\label{pg1-sep} 
If $\pg(A)=1$ and $h^1(\OO_X(-Z)) =0$,
then one of the following $2$ cases occur; see (2.3):
\begin{enumerate}
\item $I^2 = QI$.
\item $\ell_A(I^2/QI) =1$ and $I^2$ is integrally closed.
\end{enumerate}
\end{rem}

Let us recall some fundamental facts for minimally elliptic singularities.
Let $Z_f$ be the fundamental cycle and $e=-Z_f^2$.
Then $Z_f=M=Z_{K_X}$ on the minimal resolution, $\cO_X(-M)$ has no fixed
component, $\cO_X(-M)$ is generated (i.e., $\m\cO_X=\cO_X(-M)$) 
if $e\ge 2$. 
Moreover, $\m^n$ are integrally closed for all $n\ge 1$ if $e\ge 3$; 
see Laufer \cite{la.minell}.

\begin{lem} 
\label{Minimallyelliptic} 
Let $(A,\frm)$ be a minimally elliptic singularity of degree $e\ge 3$.
Then $M^2 +K_XM=0$ and $h^1(\OO_X(-M)) =0$.
\end{lem}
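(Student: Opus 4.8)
The plan is to exploit the fact that on the minimal resolution of a minimally elliptic singularity we have $Z_f = M = Z_{K_X}$, together with the defining relation $K_X + Z_{K_X} \equiv 0$. First I would observe that $M = Z_{K_X}$ means $K_X \equiv -M$ as numerical equivalence classes of divisors on $X$; hence $M^2 + K_XM = M^2 + (-M)M = M^2 - M^2 = 0$ by bilinearity of the intersection form. This disposes of the first assertion immediately and requires nothing beyond the cited structure of minimally elliptic singularities (Laufer), which the excerpt already records in the paragraph preceding the lemma.

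For the vanishing $h^1(\cO_X(-M)) = 0$, the plan is to apply Kato's Riemann–Roch formula (Theorem \ref{t:kato}) to $Z = M$. It gives
\[
\ell_A(A/I_M) + h^1(\cO_X(-M)) = -\frac{M^2 + K_XM}{2} + p_g(A) = 0 + 1 = 1,
\]
using $p_g(A)=1$ and the first part of the lemma. Since $e \ge 3$, we have $\m\cO_X = \cO_X(-M)$, so $I_M = \m$ and $\ell_A(A/I_M) = \ell_A(A/\m) = 1$. Substituting, $1 + h^1(\cO_X(-M)) = 1$, whence $h^1(\cO_X(-M)) = 0$.

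The only point that needs a word of care — and the closest thing to an obstacle — is the identification $I_M = \m$, i.e. that the maximal ideal $\m$ is represented by $M$ on the minimal resolution, which holds precisely because $e \ge 3$ guarantees $\cO_X(-M)$ is generated and has no fixed component (for $e=2$ generation still holds but one should double-check; for $e \ge 3$ the stronger statement that $\m^n$ is integrally closed for all $n$ is quoted from Laufer in the paragraph above the lemma). Once that is in hand, everything else is a one-line substitution into Kato's formula. I would present the argument in exactly this order: establish $M^2 + K_XM = 0$ from $M = Z_{K_X}$ and $K_X + Z_{K_X}\equiv 0$; then feed this into Theorem \ref{t:kato} with $I_M = \m$ and $p_g(A)=1$ to read off $h^1(\cO_X(-M)) = 0$.
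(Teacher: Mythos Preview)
Your proof is correct, but it runs in the opposite direction from the paper's. You first obtain $M^2+K_XM=0$ directly from the identity $M=Z_{K_X}$ on the minimal resolution (so $K_X\equiv -M$ and hence $K_XM=-M^2$), and then plug this into Kato's formula together with $I_M=\m$ and $p_g(A)=1$ to read off $h^1(\cO_X(-M))=0$. The paper instead first argues that $\m$ cannot be a $p_g$-ideal: since $A$ is Gorenstein with $e_0(\m)\ge 3$, one has $\m^2\ne Q\m$ (stability of $\m$ would force, via duality, $\m=Q\!:\!\m$ and hence $e_0(\m)=2$), so by Corollary~\ref{t:stable} $\m$ is not a $p_g$-ideal and therefore $h^1(\cO_X(-M))\ne p_g(A)=1$; combined with Theorem~\ref{t:lepg} this gives $h^1(\cO_X(-M))=0$. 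Only then does the paper invoke Riemann--Roch to extract $M^2+K_XM=0$.

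Your route is more elementary and self-contained, using nothing beyond the structural fact $M=Z_{K_X}$ and one application of Theorem~\ref{t:kato}; the paper's route has the virtue of illustrating the $p_g$-ideal machinery developed earlier, but is logically longer. Your caution about $I_M=\m$ is well placed but unnecessary here: for $e\ge 2$ the paragraph preceding the lemma already records $\m\cO_X=\cO_X(-M)$, and since $\m$ is integrally closed this gives $I_M=\m$.
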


\begin{proof}
Let $Q$ be a minimal reduction of $\frm$.
As $A$ is Gorenstein with $e_{0}(\m) \ge 3$, we must have $\m^2 \ne Q\m$.
In particular, $\m$ is not a $p_g$-ideal and
$h^1(\OO_X(-M)) =0$.
Also by Riemann-Roch Theorem \ref{t:kato} for $1=\ell_A(A/\frm)$, we
have $M^2 +K_XM=0$.
\end{proof}

The following lemma plays an essential role.

\begin{lem}
\label{good-ell} 
Let $(A,\frm)$ be a minimally elliptic singularity
 and let $I$ be an $\frm$-primary ideal 
such that $\bar I$ is represented by some cycle $Z$ 
on a resolution $X$ of
$\Spec A$ and assume that $I^2 = QI$.
Then $I$ is a good ideal if and only if one of the following cases occurs:
\begin{enumerate}
\item $h^1(\OO_X(-Z))=1$, $K_X Z =0$ and $I$ is integrally closed.
\item $h^1(\OO_X(-Z))=0$ and $K_X Z = 2(1+ \ell_A(\bar{I}/I))$.
\end{enumerate}
\end{lem}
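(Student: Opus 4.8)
The plan is to combine the numerical characterization of goodness in the Gorenstein case (from subsection 2.5) with Kato's Riemann--Roch formula (Theorem \ref{t:kato}), separating the two cases according to the value of $h^1(\OO_X(-Z))$, which is bounded above by $p_g(A)=1$ by Theorem \ref{t:lepg}(1). Recall that, $A$ being Gorenstein, under the hypothesis $I^2=QI$ the ideal $I$ is good if and only if $2\cdot\ell_A(A/I)=e_0(I)$. First I would record the consequences of $I^2=QI$: since $\bar I$ is represented by $Z$ and $\OO_X(-Z)$ has no fixed component, Theorem \ref{t:lepg}(2) gives that $\OO_X(-Z)$ is generated precisely when $h^1(\OO_X(-Z))=1$, hence $e_0(\bar I)=-Z^2$ in case (1); in case (2) one still has $\OO_X(-Z)$ generated after a further blow-up, or more directly $e_0(I)=e_0(\bar I)=-Z^2$ since multiplicity only depends on the integral closure. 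Applying Theorem \ref{t:kato} to $Z$ yields
\[
\ell_A(A/\bar I)+h^1(\OO_X(-Z))=-\frac{Z^2+K_XZ}{2}+p_g(A)=-\frac{Z^2+K_XZ}{2}+1.
\]

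Next I would translate the goodness criterion. Since $\ell_A(A/I)=\ell_A(A/\bar I)+\ell_A(\bar I/I)$ and $e_0(I)=-Z^2$, the condition $2\ell_A(A/I)=e_0(I)$ becomes
\[
2\ell_A(A/\bar I)+2\ell_A(\bar I/I)=-Z^2,
\]
and substituting the Riemann--Roch identity $2\ell_A(A/\bar I)=-Z^2-K_XZ+2-2h^1(\OO_X(-Z))$ gives the clean equivalent form
\[
K_XZ=2\bigl(1+\ell_A(\bar I/I)\bigr)-2h^1(\OO_X(-Z)).
\]
Now I split on $h^1(\OO_X(-Z))\in\{0,1\}$. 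If $h^1(\OO_X(-Z))=1$, the displayed equation reads $K_XZ=2\ell_A(\bar I/I)$; but $h^1(\OO_X(-Z))=p_g(A)$ means $Z$ is a $p_g$-cycle, so by Corollary \ref{t:stable} $\bar I=I_Z$ is already integrally closed, forcing $\ell_A(\bar I/I)=0$ and hence $K_XZ=0$, which is exactly case (1). (Conversely, if $I$ is integrally closed with $h^1=1$ and $K_XZ=0$, the same equation shows $2\ell_A(A/I)=e_0(I)$, so $I$ is good.) If $h^1(\OO_X(-Z))=0$, the equation is precisely $K_XZ=2(1+\ell_A(\bar I/I))$, which is case (2).

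The one point needing care --- and the likely main obstacle --- is justifying $e_0(I)=-Z^2$ when $h^1(\OO_X(-Z))=0$, since then $\OO_X(-Z)$ need not be generated on $X$ itself, so one cannot directly read off the multiplicity from $Z^2$ on this resolution. I would handle this by passing to a resolution $X'\to X$ on which $\bar I$ is represented by a generated cycle $Z'$ (the pullback followed by resolving base points), noting $e_0(I)=e_0(\bar I)=-Z'^2$ there, and checking that $Z'^2=Z^2$ and $K_{X'}Z'=K_XZ$ since these intersection numbers are preserved under pullback; alternatively, invoke directly that $e_0(I)$ depends only on $\bar I$ and apply Theorem \ref{t:kato} on the resolution where $\bar I$ is generated, then transport back. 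With that bookkeeping in place, the two implications in each case are immediate from the boxed numerical equivalence, completing the proof.
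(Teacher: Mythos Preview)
Your overall approach---combine the Gorenstein numerical criterion $2\ell_A(A/I)=e_0(I)$ with Kato's Riemann--Roch and split on $h^1(\cO_X(-Z))\in\{0,1\}$---is exactly what the paper does, and your derived identity
\[
K_XZ=2\bigl(1+\ell_A(\bar I/I)\bigr)-2h^1(\cO_X(-Z))
\]
is correct. However, there is a genuine gap in your treatment of the case $h^1(\cO_X(-Z))=1$. You write that ``by Corollary \ref{t:stable} $\bar I=I_Z$ is already integrally closed, forcing $\ell_A(\bar I/I)=0$.'' But $\bar I$ is integrally closed by definition; this tells you nothing about whether $I=\bar I$. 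What you need is: Corollary \ref{t:stable} gives $\bar I^2=Q\bar I$ (stability of the $p_g$-ideal $\bar I$), and then Lemma \ref{good_lem} (applied with $I'=\bar I$) says that if $I$ is good and $\bar I^2=Q\bar I$, then $I=\bar I$. This is precisely why the paper's proof cites Lemma \ref{good_lem}; equivalently you could invoke Proposition \ref{pg&good}. Without this step you have not shown that goodness forces $\ell_A(\bar I/I)=0$ in the $p_g$-cycle case.

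A second, minor point: your worry about whether $e_0(I)=-Z^2$ when $h^1=0$ is unfounded. The hypothesis is that $\bar I$ is \emph{represented} by $Z$, which by definition means $\bar I\cO_X=\cO_X(-Z)$ is invertible (hence generated by sections of $\bar I$) and $\bar I=H^0(\cO_X(-Z))$. Thus $\cO_X(-Z)$ is generated in both cases, and $e_0(I)=e_0(\bar I)=-Z^2$ holds without any further blow-ups. Theorem \ref{t:lepg}(2) gives only one direction (if $h^1=p_g$ then generated), not the ``precisely when'' you asserted.
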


\begin{proof} Since $A$ is Gorenstein, $I$ is good if and only if
$I^2=QI$ and $e_0(I) = 2\cdot \ell_A(A/I)$.
The result follows from Lemma \ref{good_lem} and
Riemann-Roch Theorem \ref{t:kato}.
\end{proof}

Next, let us discuss how far is an Ulrich ideal from integrally closed.

\begin{lem}
\label{bar(I)/I} 
Let $A$ be a minimally elliptic singularity of degree $e \ge 2$.
If $I$ is an Ulrich ideal of $A$, then $\ell_A(\bar{I}/I) \le 1$.
\end{lem}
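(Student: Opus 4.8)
The plan is to analyze an Ulrich ideal $I$ of a minimally elliptic singularity $A$ of degree $e\ge 2$ through the two cases provided by \lemref{good-ell}, together with the generator count $\mu_A(I)=3$. First I would reduce to understanding the quantity $\ell_A(\bar I/I)$ by comparing the minimal free presentations / colengths of $I$ and $\bar I$. Since $I$ is Ulrich it is in particular a good ideal with $\mu_A(I)=3$, and $\bar I=I_Z$ is represented by an anti-nef cycle $Z$ on some resolution $X$ (we may take $X$ to dominate the minimal resolution, and arrange $\frm\cO_X=\cO_X(-M)$ as in \thmref{mu(I)}). The key numerical input is \thmref{mu(I)}: $\ell_A(\bar I/\overline{I\frm}) = -MZ+1-\varepsilon(Z,M)$, hence $\mu_A(\bar I)\ge -MZ+1-p_g(A)=-MZ$ since $p_g(A)=1$. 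Because $I\subseteq\bar I$ are integrally closed-in-the-sense-needed and $\bar I$ is integrally closed, \remref{rem:mu} gives $\mu_A(\bar I)\le\mu_A(I)=3$; combined with the lower bound and the fact that $\mu_A(\bar I)\ge 3$ would force $\bar I$ to need at least three generators while $-MZ\ge e\cdot(\text{something})$, I expect to pin down $-MZ$ and thereby $Z$ quite rigidly.

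The main step is then to exploit \lemref{good-ell}. In case (1) of that lemma, $I$ is integrally closed, so $\ell_A(\bar I/I)=0\le 1$ and there is nothing to prove. So the real work is case (2): $h^1(\cO_X(-Z))=0$ and $K_XZ=2(1+\ell_A(\bar I/I))$. Here I would use \remref{pg1-sep}: since $p_g(A)=1$ and $h^1(\cO_X(-Z))=0$, either $\bar I^{\,2}=\bar Q\bar I$ for a minimal reduction, or $\ell_A(\bar I^{\,2}/\bar Q\bar I)=1$ with $\bar I^{\,2}$ integrally closed. Feeding this into Riemann–Roch (\thmref{t:kato}) applied to both $Z$ and to the maximal ideal cycle $M$ (using \lemref{Minimallyelliptic} when $e\ge 3$, namely $M^2+K_XM=0$), together with the Ulrich condition $2\ell_A(A/I)=e_0(I)=-Z^2$, I would derive that $K_XZ$ is forced to be small — the point being that $\ell_A(\bar I/I)$ is squeezed between $0$ and the gap measured by $h^1$ of intermediate sheaves, which is at most $p_g(A)=1$.

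Concretely, the bound $\ell_A(\bar I/I)\le 1$ should come out as follows: consider the chain $I\subseteq\bar I$ and the induced surjection on $H^1$'s of the relevant twisted structure sheaves; the defect is governed by an $\varepsilon$-type invariant which by \proref{vareps}(1) is at most $p_g(A)=1$. More precisely, writing $I=I_{Z'}$-modulo-its-integral-closure is not literally available, so instead I would argue via colengths: $\ell_A(\bar I/I)=\ell_A(A/I)-\ell_A(A/\bar I)$, and both are computed by Riemann–Roch in terms of $Z^2$, $K_XZ$ and the relevant $h^1$; since for $I$ itself (which is good but perhaps not integrally closed) the only slack compared to $\bar I=I_Z$ is an $h^1$ that lies in $\{0,1\}$, the difference is at most $1$. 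The case $e=2$ (where $A$ is a complete intersection and $\frm$ may itself be special) I would treat separately, checking directly that the same squeeze applies.

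The step I expect to be the main obstacle is making precise the passage from $I$ (not known to be integrally closed) to $\bar I=I_Z$: one must show that the only way $I$ can fail to be integrally closed, given that it is Ulrich, is by a colength-one jump, and this requires carefully combining the stability $I^2=QI$, the Ulrich freeness of $I/I^2$, and the $p_g=1$ constraint through \remref{pg1-sep} and \lemref{good-ell}. Once that is in hand the inequality $\ell_A(\bar I/I)\le 1$ is immediate. I would close by remarking that this bound is sharp, with equality realizable on specific minimally elliptic hypersurfaces, foreshadowing the classification in the remainder of the section.
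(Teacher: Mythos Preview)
Your setup is correct: split into the two cases of \lemref{good-ell}, dispose of case (1) immediately, and in case (2) use the identity $K_XZ=2(1+\ell_A(\bar I/I))$ together with the generator bound $\mu_A(\bar I)\ge -MZ$ from \thmref{mu(I)}. But the argument then stalls. You never connect $-MZ$ to $K_XZ$, and the ``Riemann--Roch squeeze'' you propose in its place does not work: $I$ is not of the form $I_{Z'}$, so \thmref{t:kato} does not compute $\ell_A(A/I)$, and there is no $h^1$ term that bounds $\ell_A(\bar I/I)$ by $p_g(A)=1$ in the way you suggest. Your invocation of \remref{rem:mu} is also off, since that remark requires both ideals to be integrally closed; what is actually needed is the elementary bound $\mu_A(\bar I)\le \mu_A(I)+\ell_A(\bar I/I)$.

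The missing idea is a geometric comparison specific to minimally elliptic singularities: on the minimal resolution $X_0$ one has $M_0=-K_{X_0}$, so if $f\colon X\to X_0$ is the contraction then $M=f^*M_0$ and $K_X=f^*K_{X_0}+L$ with $L\ge 0$ supported on the $f$-exceptional curves. Since $Z$ is anti-nef, $LZ\le 0$, and one computes
\[
-MZ \;=\; K_{X_0}\cdot f_*Z \;=\; K_XZ - LZ \;\ge\; K_XZ.
\]
With this in hand the chain closes:
\[
3=\mu_A(I)\;\ge\;\mu_A(\bar I)-\ell_A(\bar I/I)\;\ge\;-MZ-\ell_A(\bar I/I)\;\ge\;K_XZ-\ell_A(\bar I/I)\;=\;2+\ell_A(\bar I/I),
\]
giving $\ell_A(\bar I/I)\le 1$. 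This inequality $-MZ\ge K_XZ$ is the crux, and nothing in your outline supplies it.
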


\begin{proof} We saw a good ideal $I$ is integrally closed if $\bar{I}$ is a
$p_g$-ideal in Proposition \ref{pg&good}.
Hence we may assume that $\bar{I}=I_Z$ with
$h^1(\cO_X(-Z))=0$ on some resolution $X$.
Now, by \thmref{mu(I)}, $\mu(I_Z) \ge -MZ$,
where $M$ is the maximal ideal cycle on $X$.
Now let $X_0$ be the minimal resolution of $\Spec A$ and $f : X\to X_0$
be the contraction. Then we know
that $M= f^*(M_0)$ and $M_0 = -K_{X_0}$, 
where $M_0$ is the maximal ideal cycle on $X_0$.
Now, write
\[
Z = f^*(f_*(Z)) + Y \quad \text{and} \quad K_X = f^*(K_{X_0}) + L.
\]
Then $Z K_X = f_*(Z) K_{X_0} + YL$ and $YL = ZL \le 0$ since $L\ge 0$
and $Z$ is anti-nef.
Moreover,
$MZ = M_0f_*(Z) = -K_{X_0} f_*(Z) \le -K_X Z$.
Thus we have
\[
3 = \mu_A(I)
\ge \mu_A(\bar{I}) - \ell_A(\bar{I}/I)
\ge - MZ -\ell_A(\bar{I}/I)
\ge K_XZ -\ell_A(\bar{I}/I) = \ell_A(\bar{I}/I) +2
\]
and we get the desired inequality.
\end{proof}

\begin{thm}
\label{Ul-min-ell} 
Let $A$ be a minimally elliptic singularity of degree
$e\ge 2$.
\begin{enumerate}
\item If $e\ge 5$, then $A$ has no Ulrich ideals.
\item If $e=4$ and $I$ is an Ulrich ideal with $\bar{I}= I_Z$, then
$\ell_A(\bar{I}/I) =1$ and $\overline{I}$ is represented on the
minimal resolution $X_0$ and $-MZ=4$, where $M$ is the maximal ideal
cycle on $X$.
\item If $A$ has an Ulrich ideal which is a $p_g$-ideal, then $e\le 2$.
\end{enumerate}
\end{thm}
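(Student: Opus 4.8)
The plan is to reduce the whole statement to two numerical facts about the intersection numbers $-MZ$ and $K_XZ$, and then read off (1)--(3) by arithmetic. Throughout I would fix the minimal resolution $X_0\to\spec A$, write $M_0$ for its maximal ideal cycle, and use that for a minimally elliptic singularity of degree $e\ge 2$ one has $M_0=Z_f=Z_{K_{X_0}}$, $K_{X_0}=-M_0$, $-M_0^2=e$, and $\m\cO_{X_0}=\cO_{X_0}(-M_0)$. If $I$ is an Ulrich ideal and $\bar I=I_Z$ is represented on a resolution $X$, then $X$ automatically dominates the minimal resolution, so there is $f\colon X\to X_0$; moreover $M:=f^*M_0$ is the maximal ideal cycle on $X$ with $\m\cO_X=\cO_X(-M)$, so \thmref{mu(I)} applies on $X$. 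I would also note $f_*Z\neq 0$, since $\bar I$ is $\m$-primary.

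The first key step is the lower bound $-MZ\ge e$. Here $f_*Z$ is a nonzero anti-nef cycle on $X_0$ (push-forward of an anti-nef cycle is anti-nef, by the projection formula applied to total transforms of exceptional curves), hence $f_*Z\ge Z_f=M_0$; then, using the projection formula and that $-M_0$ is nef,
\[
-MZ=-M_0\cdot f_*Z=-M_0^2+(-M_0)\cdot(f_*Z-M_0)\ge -M_0^2=e.
\]
The second key step is to invoke \lemref{good-ell} and \lemref{bar(I)/I}. Since an Ulrich ideal is good, $I^2=QI$, so \lemref{good-ell} gives two cases. In case (a), $h^1(\cO_X(-Z))=1$, $K_XZ=0$ and $I=\bar I$; then $Z$ is a $p_g$-cycle, so by \thmref{mu(I)} and \exref{mu(I)=} we get $3=\mu_A(I)=-MZ+1$, i.e.\ $-MZ=2$. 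In case (b), $h^1(\cO_X(-Z))=0$; then \lemref{bar(I)/I} gives $\ell_A(\bar I/I)\le 1$, the Gorenstein/Riemann--Roch computation in its proof gives $K_XZ=2(1+\ell_A(\bar I/I))$, and the colength bound $-MZ\le\mu_A(\bar I)\le \mu_A(I)+\ell_A(\bar I/I)=3+\ell_A(\bar I/I)\le 4$ holds (the first inequality being \thmref{mu(I)} with $p_g(A)=1$).

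Parts (1) and (3) now fall out. If $e\ge 5$ and $I$ were Ulrich, case (a) gives $2=-MZ\ge e\ge 5$ and case (b) gives $4\ge -MZ\ge e\ge 5$, both absurd, so no Ulrich ideal exists. If $A$ has an Ulrich ideal that is a $p_g$-ideal, then $h^1(\cO_X(-Z))=p_g(A)=1$ forces case (a), whence $-MZ=2$ and therefore $e\le 2$. For part (2) ($e=4$), case (a) is impossible since it would give $-MZ=2<4$, so we are in case (b); then $4=e\le -MZ\le 3+\ell_A(\bar I/I)\le 4$ forces $-MZ=4$ and $\ell_A(\bar I/I)=1$, whence $K_XZ=2(1+1)=4$. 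To conclude that $\bar I$ is represented on $X_0$, I would write $K_X=f^*K_{X_0}+L$ with $L=K_{X/X_0}\ge 0$ supported exactly on the exceptional locus of $f$; since $-M=f^*K_{X_0}=K_X-L$, the computed values give $-MZ=K_XZ-LZ$, so $LZ=0$. As $Z$ is anti-nef and $L\ge 0$, this forces $ZE_i=0$ for every $f$-exceptional curve $E_i$, so the $f$-exceptional part of $Z$ is orthogonal to all $f$-exceptional curves, hence $0$ by negative-definiteness; thus $Z=f^*(f_*Z)$ and $\bar I$ is represented by $f_*Z$ on $X_0$ (equivalently, the contraction of $X$ giving the normalized blow-up of $\bar I$ factors through $X_0$, cf.\ \proref{minrep}).

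The only place where anything beyond bookkeeping happens is assembling the two numerical inputs: the push-forward/nefness inequality $-MZ\ge e$ of the first step (which rests on $M=f^*M_0$, $f_*Z$ anti-nef, and $f_*Z\ge Z_f$), and extracting $K_XZ=2(1+\ell_A(\bar I/I))$ together with the colength bound from the already-established \lemref{bar(I)/I}. Once these are in hand, (1)--(3) are immediate arithmetic; a minor point to dispatch carefully is that every resolution representing $\bar I$ dominates $X_0$, so no auxiliary resolution is needed, and that $f_*Z\neq 0$.
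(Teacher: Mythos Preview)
Your argument is correct and follows essentially the same route as the paper: both use the lower bound $-MZ\ge e$ (via the projection formula and $f_*Z\ge M_0$), the bound $\mu_A(\bar I)\ge -MZ$ from \thmref{mu(I)}, and then read off (1)--(3) from \lemref{good-ell} and \lemref{bar(I)/I}; your case split via \lemref{good-ell} is merely a reorganization, and your justification that $ZL=0$ forces $Z=f^*(f_*Z)$ makes explicit what the paper's one-line ``$ZL=0$, therefore $\bar I$ can be represented on the minimal resolution'' leaves implicit.
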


\begin{proof} Note that $e = -M^2\le -MZ$ 
for any anti-nef cycle $Z$.
If $I$ is Ulrich, then $\mu_A(I)=3$ and putting $\bar{I}=I_Z$,
$\mu_A(I_Z) \le 4$. 
On the other hand, $\mu_A(I_Z) \ge -MZ \ge e$ and we
have $e\le 4$. 
If $I_Z$ is an Ulrich $p_g$-ideal,
then by Example \ref{mu(I)=}, we have $3 = \mu_A(I) = - MZ + 1\ge e+1$.
\par
If $e=4$ and $I$ is an Ulrich ideal with $\bar{I}=I_Z$, 
then $\ell_A(\bar{I}/I)=1$ and $-MZ=4$ 
since $\mu_A(I_Z)\ge -MZ\ge e$.
Moreover, by Lemma \ref{good-ell}, we have $K_XZ = 4$.
Using the notation of Lemma \ref{bar(I)/I}, we have $ZL=0$.
Therefore $\bar{I}$ can be represented on the minimal resolution.
\end{proof}

\begin{rem} 
\label{rem_minimally4} 
Let $A$ be a minimally elliptic singularity of
degree $e=4$.
Then it is known that $A$ is a complete intersection
of codimension $2$.
Now, let $I$ be an $\frm$ primary ideal generated
by $3$ elements among the minimal generating system of $\frm$ containing
some minimal reduction $Q$ of $\frm$.
Then we can show that $I$ is an Ulrich ideal.
Let $E= f^{-1}(\frm)$, where $f: X \to \Spec A$ is the blowing up of the
maximal ideal.
Now, consider the family of ideals $I$ generated by $3$ elements among
the minimal generating system of $\frm$.
Then the condition $I$ contains a minimal reduction of $\frm$ is
equivalent to say that the generators of $I$ have no common zero on $E$
as sections of a line bundle $\m\cO_X$.
Hence the family of such ideals forms an open subset of $\PP^3 =
\PP(\frm/\frm^2)$.
If $I$ is an Ulrich ideal, then $\bar{I}=\frm$ by Theorem \ref{Ul-min-ell}.
\end{rem}

\begin{ex} 
\label{minimally4} 
Let $A = k[[x,y,z,w]]/(y^2-xz, w^3 - y(z-x))$ be a complete intersection,
which is a minimally elliptic singularity of degree $e=4$.
Then the minimal
resolution of $\Spec A$ is a star-shaped graph with central curve
$E_0\cong \PP^1$
with $E_0^2=-2$ and $4$ branches $E_i$ ($i=1,2,3,4$) with $E_i^2=-3$.
Here, we have
$M= 2 E_0 + \sum_{i=1}^4 E_i$.
Let $Z = 3 E_0 + \sum_{i=1}^4 E_i$. Then $I_Z = (x,y,z,w^2)$
and $I= (x,y,z)$ is an Ulrich ideal with $\bar{I}=I_Z$.
\end{ex}

\par
In the following, let $(A,\m)$ be a simple elliptic singularity of
degree $e$ (see Section 2).
Let us classify all of the Ulrich ideals of those rings. 
Also, let $I$ be an $\frm$-primary ideal and
$Q$ its minimal reduction.
We assume $I \OO_X = \OO_X( -Z)$ for some
resolution $X$ so that
$\bar{I} = I_Z= \rmH^0(X, \OO_X( - Z))$ is
the integral closure of $I$.
\par
Now we state our classification.

\begin{thm}
\label{Main-ell} 
Let $(A,\frm)$ be a simple elliptic singularity of
degree $e$. Then$:$
\begin{enumerate}
\item If $e\ge 5$, then $A$ has no Ulrich ideals.
\item If $e=4$ and $I$ is an Ulrich ideal,
then $\ell_A(A/I)=2$ and $\bar{I}=\frm$.
\item If $e=3$ and $I$ is an Ulrich ideal,
then $\ell_A(A/I)=2$ and $I$ is integrally closed.
Such ideals consist a family parametrized
by the elliptic curve $E_0$.
\item If $e=2$, then an Ulrich ideal $I$ is one of the followings$;$
\begin{enumerate}
 \item $\m$. 
 \item $I=\bar{I}$ and $\ell_A(A/I)=2$.
 \item $I=\bar{I}$ and $\ell_A(A/I)=3$.
 \item $\bar{I}=\overline{\frm^2}$ and $\ell_A(A/I)=4$.
\end{enumerate}
There are $4$ ideals of type $($c$)$, and the ideals of type $($b$)$ is
parametrized by $\PP^1\setminus \{4$ points$\}$.
\item If $e=1$, then $A$ has integrally closed Ulrich ideals with
$\ell_A(A/I) =1,2,3,4$.
The ones with $\ell_A(A/I)=2$ (resp. $\ell_A(A/I)=3$) are parametrized
by $E_0$
$($resp. $\PP^1\setminus \{3$ points$\}$$)$ and there are exactly $3$ Ulrich
ideals
with $\ell_A(A/I) =4$.
\end{enumerate}
\end{thm}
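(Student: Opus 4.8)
The plan is to classify Ulrich ideals of a simple elliptic singularity $(A,\frm)$ of degree $e$ by first using the general non-existence result for minimally elliptic singularities (Theorem \ref{Ul-min-ell}), since every simple elliptic singularity is minimally elliptic, to immediately dispose of the case $e\ge 5$ and to reduce the cases $e=4,3$ to very rigid situations: in each case the integral closure $\bar I$ is forced to be represented on the minimal resolution $X_0$, whose exceptional set is a single elliptic curve $E_0$ with $E_0^2=-e$. So the first step is to set up the geometry on $X_0$: an anti-nef cycle representing $\bar I$ has the form $Z=nE_0$ (since $E_0$ is the only component), $\OO_{X_0}(-Z)$ is generated exactly when $\deg\OO_{E_0}(-nE_0)=ne\ge 2g(E_0)=2$ and there is no base point, and $K_{X_0}=-E_0$ because $A$ is Gorenstein minimally elliptic with fundamental cycle $E_0$; then $K_{X_0}Z=-E_0\cdot nE_0=ne$. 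The Riemann-Roch computation $\ell_A(A/\bar I)=p_g-h^1-\tfrac{Z^2+K_XZ}{2}=1-h^1(\OO(-nE_0))$, together with the Gorenstein good-ideal criterion $2\ell_A(A/I)=e_0(I)=-Z^2=n^2e$, pins down $\ell_A(A/I)$ in each case.

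Next I would work case by case. For $e=4$: Theorem \ref{Ul-min-ell}(2) already gives $\bar I=\frm$, $-MZ=4$, $\ell_A(\bar I/I)=1$, so $\ell_A(A/I)=\ell_A(A/\frm)+1=2$; it remains only to confirm that such an $I$ actually exists, which is Example \ref{minimally4}. For $e=3$: here $-MZ=nr$ with $r=3$ by Example \ref{elii-mu}, and $\mu_A(I_Z)\le 4$ forces $n=1$, i.e. $\bar I=\frm$; but then $h^1(\OO_{X_0}(-E_0))$: since $\deg\OO_{E_0}(-E_0)=3\ge 1=g$, if $\OO_{E_0}(-E_0)\not\cong\OO_{E_0}$ we get $h^1=0$ and Lemma \ref{good-ell}(2) demands $K_XZ=2(1+\ell_A(\bar I/I))$, i.e. $3=2+2\ell_A(\bar I/I)$, impossible; so instead we must be in the case where a \emph{sub}ideal $I\subsetneq\frm$ is used — I would instead take $Z$ on a blow-up. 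Here I would reconsider: the Ulrich ideal $I$ has $\mu_A(I)=3$ but $\mu_A(\frm)=\max\{e,3\}=3$, so the "extra generator" budget is tight and we need $-MZ=2$, forcing $\bar I=I_Z$ with $\OO_{X_0}(-Z)$ corresponding to a degree-$2$ line bundle $\OO_{E_0}(2p)\otimes(\text{torsion})$. The freedom in choosing the divisor class of degree $0$ that makes $h^1$ jump — equivalently choosing a point of $\Pic^0(E_0)\cong E_0$ — is exactly the elliptic-curve family in (3); and $\ell_A(A/I)=2$ by the colength formula. I would verify integral closedness via Lemma \ref{good-ell}(1) once $h^1=1$ is arranged (which happens precisely when $\OO_{E_0}(-Z)\cong\OO_{E_0}$, i.e. $Z$ is a $p_g$-cycle on the blow-up), noting $K_XZ=0$ there.

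For $e=2$ and $e=1$ the analysis is more delicate because $\mu_A(\frm)=3$ (resp. $3$, since $e\le 2$ gives $\ell_A(\frm/\frm^2)=3$) and powers of $\frm$ are no longer all integrally closed; now genuinely non-integrally-closed Ulrich ideals appear, and one must allow representations on iterated blow-ups of $X_0$ at points of $E_0$. The strategy is: parametrize anti-nef generated cycles $Z$ on such blow-ups by their image divisor class $L=f_*Z$ on $E_0$ of degree $d=-MZ$; the constraint $\mu_A(I)=3$ bounds $d$ (from $\mu_A(\bar I)\ge -MZ+1-p_g = d-1$ when $h^1\le 1$, plus the $\ell_A(\bar I/I)$ correction from Lemma \ref{bar(I)/I}-type estimates), giving $d\in\{1,\dots,4\}$ for $e=1$ and $d\in\{2,3,4\}$ (plus $\frm$ itself) for $e=2$; then for each $d$ the condition that $Z$ be (or become) a $p_g$-cycle is $\OO_{E_0}(-Z)\cong\OO_{E_0}$, which is a single point condition on $\Pic^0(E_0)$, explaining the $E_0$-families; the "$\PP^1$ minus finitely many points" families arise from choosing an effective divisor of degree $d$ on $E_0$ up to linear equivalence (a $\PP^{d-1}$ worth of sections) subject to avoiding the finitely many "bad" configurations where $\bar I$ drops or the generators acquire a common zero; and the finitely many sporadic ideals with $\ell_A(A/I)=4$ come from the $d=4$ case where the linear system is a $g^1_?$ forcing special divisor classes. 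In each sub-case the colength $\ell_A(A/I)$ is read off from Riemann-Roch: $\ell_A(A/\bar I)=1-h^1$ and $\ell_A(A/I)=\ell_A(A/\bar I)+\ell_A(\bar I/I)$ with $\ell_A(\bar I/I)$ determined by $2\ell_A(A/I)=-Z^2$. The main obstacle I expect is the bookkeeping in the $e\le 2$ cases: correctly identifying which line bundles/divisor classes on $E_0$ yield an ideal with exactly three generators (not more, not fewer) that is stable, and matching the resulting moduli (point of $E_0$ versus $\PP^1\setminus\{\text{pts}\}$ versus finite set) to the stated families — this requires careful use of Theorem \ref{mu(I)} together with Remark \ref{rem:mu} (monotonicity of $\mu$ under inclusion of integrally closed ideals) and an explicit description, on the elliptic curve, of when $H^0(\OO_{E_0}(-Z))$ has base points, rather than any single hard theorem.
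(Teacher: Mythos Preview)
Your treatment of $e\ge 4$ via Theorem~\ref{Ul-min-ell} is fine, but the $e=3$ case contains a genuine error that propagates into your outline for $e\le 2$. You try to realize the Ulrich ideals as $p_g$-ideals, invoking case~(1) of Lemma~\ref{good-ell} ($h^1=1$, $K_XZ=0$) and reading the elliptic-curve family off $\Pic^0(E_0)$. This cannot work: Theorem~\ref{Ul-min-ell}(3) (equivalently Example~\ref{mu(I)=}) says that for any $p_g$-ideal $I_Z$ one has $\mu_A(I_Z)=-MZ+1\ge e+1=4$, so no $p_g$-ideal is Ulrich when $e=3$. Your assertion ``we need $-MZ=2$'' is impossible outright, since $-MZ\ge -M^2=e=3$ for every anti-nef $Z$. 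The paper's argument for $e=3$ goes through the \emph{other} branch of Lemma~\ref{good-ell}: one shows $h^1(\cO_X(-Z))=0$, forces $n=1$ via $\mu_A(I_Z)\ge -MZ=3n$ together with $\ell_A(\bar I/I)\le 1$, and then the parity constraint $K_XZ=2(1+\ell_A(\bar I/I))\le K_{X_0}E_0=3$ gives $K_XZ=2$ and $I=\bar I$. Concretely $Z=f^*E_0+E$ where $f\colon X\to X_0$ blows up a single point $P\in E_0$; the $E_0$-family is the choice of $P$, not a class in $\Pic^0$. You are also missing the nontrivial verification that such $I_Z$ is actually Ulrich: one must check $I^2=QI$, which the paper does by showing $I^2\ne\overline{I^2}$ via a colength count (Remark~\ref{pg1-sep}).

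For $e=1$ there is a further structural issue your outline does not address: $\m\cO_{X_0}$ is \emph{not} generated (it has a base point on $E_0$), so Theorem~\ref{mu(I)} cannot be applied on $X_0$. The analysis must begin on the blow-up $X_1$ with exceptional set $E_0\cup E_1$, where $M=E_0+2E_1$, $K_{X_1}M=0$, and $\m$ itself is a $p_g$-ideal; the arithmetic of $f_*(Z)=aE_0+bE_1$ then separates into the $p_g$ and non-$p_g$ branches quite differently from what your degree-$d$ bookkeeping suggests. Similarly, for $e=2$ your proposed range $d\in\{2,3,4\}$ conflicts with $-MZ=2n$ being even; the correct dichotomy is again $p_g$ (cases~(b),(c), integrally closed, $K_XZ=0$, $n=1$) versus non-$p_g$ (cases~(a),(d), $K_XZ=2(1+\ell_A(\bar I/I))$, $n\le 2$).
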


Incidentally, $A$ is not a hypersurface or complete intersection if and
only if $e\ge 5$. This raises the following Question.

\begin{quest} 
\label{ci-Ulrich} 
Let $(A,\frm)$ be Gorenstein normal local ring of dimension $\ge 2$.
Up to now, if $A$ has an Ulrich ideal, then $A$ is a complete intersection.
Is $A$ a complete intersection if it has an Ulrich ideal?
\end{quest}

\begin{proof}[Proof of Theorem \ref{Main-ell}]
The cases with $e\ge 4$ are treated already in Theorem \ref{Ul-min-ell}. 
So we may assume that $e \le 3$. 
In the following, let $I$ be an Ulrich ideal
of $A$ with $\bar{I}=I_Z$
for some anti-nef cycle $Z$ on some resolution $X \to \Spec A$. 
Let $X_0$ be the minimal resolution with exceptional set $E_0$ 
and let $f \colon X \to X_0$ be the contraction.
Suppose that $f_{*}(Z) = nE_0$.  
\par
(3) If $e=3$, and $I=I_Z$ is a $p_g$-ideal, then by Example \ref{mu(I)=},
$\mu_A(I) \ge 4$. Hence, we may assume $I_Z$ is not a $p_g$-ideal.
If $n\ge 2$, then $\mu_A(I_Z)\ge 6$ by Theorem \ref{mu(I)} 
and thus by Lemma \ref{bar(I)/I}, $\mu_A(I)\ge 5$.
Hence $n=1$ and $K_XZ \le K_{X_0}E_0 =3$. By Lemma \ref{good-ell},
we have $K_XZ =2$ and $I$ is integrally closed.
Since $K_XZ =2$, 
we need exactly one blowing-up from $X_0$ and hence we may assume that 
$f$ is the blowing up of a point $P$ on $E_0$ and $Z= f^*(E_0) + E$, 
where $E=f^{-1}(P)$.
In this case, $\ell_A(A/ I_Z)=2$. Such $I$ is determined by $P\in E_0$.
\par
Now let us show that such $I=I_Z$ is actually an Ulrich ideal. Since
$\ell_A(A/I) =2$
and $e_0(I) = -Z^2=4$, we have only to show that $I^2= QI$ for a reduction
$Q$ of $I$. 
For that purpose, we need to show that $I^2$ is \textit{not} integrally closed;
see Remark \ref{pg1-sep}.
\par
Now, by Riemann-Roch Theorem \ref{t:kato}, we get
$\ell_A(A/\overline{I^2})=7$.
On the other hand, since $I$ is generated by $\frm^2$ and $2$ linear forms,
$\ell_A( A/(I^2 +\frm^3))=7$. 
Hence if $I^2= \overline{I^2}$, then $I^2
+\frm^3 = I^2$,
or $I^2\supset \frm^3$. This contradicts the fact $3 f^*(E_0) \not\ge 2Z$.
\par
(4) Next, assume $e=2$. If $\bar{I}=I_Z$ is a $p_g$-ideal,
then by Lemma \ref{good-ell}, we have $I_Z=\bar{I}=I$ and $K_XZ=0$. 
Also $\mu_A(I) \ge 2n+1$
and hence $n=1$. Then the cases (b), (c) of the theorem occur.
Actually, we know that $\hat{A} \cong k[[x,y,z]]/(x^2 - \phi(y,z))$, where
$\phi(y,z)$ is a homogeneous polynomial of degree $4$ in $(y,z)$.
Take any linear form $l \in I$. 
We may assume that $l$ is a form of $y$ and $z$. 
If $l$ is not a factor of $\phi$, then
the line $l=0$ intersects with $E_0$ in $2$ points $P_1,P_2$.
Let $f: X\to X_0$ be the blowing up of these $2$ points, 
let $E_i =f^{-1}(P_i)$.
and $Z = f^*(E_0) + E_1+E_2$, then we get the case (b). 
If $l$ is a factor of $\phi$,
then $l=0$ intersects $E_0$ at a point $P$ with multiplicity $2$.
Let $f: X\to X_0$ be the blowing up of $P$, let $E_i = f^{-1}(P)$
and $Z = f^{*}(E_0) + 2E$, then we get the case (c). The ideal is
$I = (l, x, (y,z)^2)$ in case (b) and $I = (l, x, (y,z)^3)$ in case (c).
\par
Next assume that $\bar{I} = I_Z$ is not a $p_g$-ideal. 
By Lemma \ref{good-ell},
we have $K_X Z = 2 (1+ \ell_A(\bar{I}/I))$. 
Also, Theorem \ref{mu(I)} and Lemma \ref{bar(I)/I} imply that
\[
K_XZ \le 2n=-MZ \le \mu_A(I_Z) \le \mu_A(I)+\ell_A(\bar{I}/I) \le 4.
\]
Hence $n\le 2$ and $K_X Z\le 2n$. 
It turns out that we have $K_X Z= 2n$ in case $n=1,2$
and we get the cases (a),(d).
\par
In the case $A = k[[x,y,z]]/(x^2 + y^4+z^4)$, the Ulrich ideals are
calculated in Example \ref{simple2}
using the theory of simple singularities.
\par
(5) Finally let us treat the case $e=1$.
In this case, $\frm \cO_{X_0}$ has a base point and
let $X_1$ be the blowing up of the base point. 
We choose $X_1$ as starting point.
The exceptional set of $X_1$ is $E_0\cup E_1$, where $E_0$ is an
elliptic curve with $E_0^2=-2, E_1\cong\PP^1$ with $E_1^2=-1$ 
and $E_0E_1=1$. 
Here, $\frm$ is defined by $M = E_0+2E_1$. 
Note that $K_{X_1}M =0$ and $\frm$ is a $p_g$-ideal.
\par
Let $I$ be an Ulrich ideal of $A$ with $\bar{I}=I_Z$, $f : X \to X_1$ be
contraction and we put $f_*(Z) = aE_0+ bE_1$.
Hence $-Mf_{*}(Z) = b$ and $K_{X_1}f_*(Z) = 2a-b$.
Since $f_{*}(Z)$ is anti-nef, $a\le b$. 

\par
First assume that $I_Z$ is a $p_g$-ideal. 
Then from $\mu(I) =3$, we get $b=2$.
Here if $a=1$, $K_{X_1}f_*(Z)=0$ and $Z=M, I=\frm$. 
If $a=2$, we get the ones with $\ell_A(A/I)= 3,4$.
\par
Actually, we can assume
$\hat{A} \cong k[[x,y,z]]/(x^2 - \phi(y,z^2))$, where
$\phi$ is a homogeneous polynomial of degree $3$ with no multiple roots.
Since $a\ge 2$, $I$ is contained in $(x,y,z^2)$. 
Take any linear form $l \in I$. 
We may assume that $l$ is a form of $y$ and $z^2$. 
If $l$ is not a factor of $\phi$, then
$l=0$ defines $2$ points $P_1,P_2$ on $E_0$.
Let $f: X\to X_1$ be the blowing up  
of these $2$ points 
and put $F_i = f^{-1}(P_i)$ ($i=1,2$).
Then putting $Z= f^*(2E_0+2E_1) + F_1+F_2$, we get $K_XZ=0$ and $I_Z$ is
an Ulrich ideal of $\ell_A(A/I_Z)=3$.
If $l$ is one of $3$ factors of $\phi$, then
$l=0$ intersects $E_0$ at 
a point $P$ 
with multiplicity $2$.
Let $f: X\to X_1$ be the blowing up of $P$ and
put $F= f^{-1}(P)$ .
Then putting $Z= f^*(2E_0+2E_1) + 2F$, we get $K_XZ=0$
and $I_Z$ is an Ulrich ideal of $\ell_A(A/I_Z)=4$.

Next assume that $h^1(-Z)=0$ with
$f_*(Z) = aE_0+ bE_1$.
Then by Example \ref{mu(I)=}, 
$\ell_A(\bar{I}/ \overline{\frm I}) = - MZ+1$.
Hence 
\begin{eqnarray*}
3 =\mu_A(I)& \ge &  - MZ +1 - \ell_A(\bar{I}/I) \\
&=& - M_{X_1}f_*(Z) +1 -\ell_A(\bar{I}/I) \\
&=& (K_{X_1} - 2E_1)f_*(Z) +1- \ell_A(\bar{I}/I) \\
&\ge& K_XZ +1 - \ell_A(\bar{I}/I) \\
&=& 2(1+\ell_A(\bar{I}/I)) +1 - \ell_A(\bar{I}/I) \\
&=& 3 + \ell_A(\bar{I}/I).
\end{eqnarray*} 
Thus we have $\bar{I}=I$.
Then by Lemma \ref{good-ell}, we must have $K_X Z =2$.
On the other hand, $2=K_XZ \le K_{X_1}f_*(Z) = 2a-b$
and $3 = \mu_A(I) \ge - MZ +1 = b +1$.
Hence we must have $a=b=2$, $I = (x,y,z^2)$.
\par
In the case $A = k[[x,y,z]]/(x^2 + y^3+z^6)$,
the Ulrich ideals are calculated in Example \ref{simple1}
using the theory of simple singularities.
\end{proof}

\begin{ex} 
\label{simple3}
Let $(A,\frm)$ be the local ring of the vertex of
the cone over smooth cubic curve $E_0 \subset \PP_k^2$.
Then $A$ is a simple elliptic singularity of degree $e=3$.
The minimal resolution $X_0$ of $A$ is obtained by blowing-up
of the maximal ideal and the exceptional set is $E_0$ with $E_0^2=-3$.
Take a line $l = 0$ in $\PP_k^2$ intersecting with $E_0$ at 3 distinct
points
$P_1,P_2,P_3$.
Let $\pi : X\to X_0$ be obtained by blowing-up these 3 points.
We denote $E_i=\pi^{-1}(P_i)$ the corresponding exceptional curve
($i=1,2,3$)
and we denote $E_0$ the elliptic curve.
Put $Z = E_0 + 2( E_1+E_2+E_3)$.
Then $\OO_X(-Z)\otimes \OO_{E_0}\cong \OO_{E_0} $ and actually,
we get $h^1(\OO_X(-Z))=1$.
If we put $I = \rmH^0( X, \OO_X(-Z))$, then
$I $ is generated by $\frm^2$ and the linear
form $l$ and $I$ is a good ideal
with $e(I) =6$ and $\ell_A(A/I)=3$.
Since $\mu_A(I) = 4$, $I$ is not an Ulrich ideal.
\end{ex}

\begin{ex} 
\label{simple1} 
Let $A=k[[x,y,z]]/(x^2+y^3+z^6)$ be a hypersurface,
which is a simple elliptic singularity of degree $e=1$.
Then
\begin{enumerate}
\item $\m=(x,y,z)$ is an Ulrich ideal of colength $1$ with minimal
reduction $Q = (y,z)$.
\item $I=(x,y,z^2)$ is an Ulrich ideal of colength $2$ with minimal
reduction $Q = (y,z^2)$.
\item For any $\varepsilon \in \mathbb{C}$,
$(x,y+\varepsilon z^2,z^3)$ is an Ulrich ideal of colength $3$ with
minimal reduction $Q = (y+\varepsilon z^2,z^3)$.
\item For an $\varepsilon \in \mathbb{C}^{\times}$, $(x,y+\varepsilon
z^2,z^4)$ is an Ulrich ideal
of colength $4$ if and only if $\varepsilon^3=1$.
Then $Q=(y+\varepsilon z^2,z^4)$ gives a minimal reduction.
\end{enumerate}
\end{ex}

\begin{ex} 
\label{simple2} 
Let $A=k[[x,y,z]]/(x^2+y^4+z^4)$ be a hypersurface, which is a simple
elliptic singularity of degree $e=2$.
\begin{enumerate}
\item $\m=(x,y,z)$ is an Ulrich ideal of colength $1$ with minimal
reduction $Q=(y,z)$.
\item If we put $I=(x,y+\varepsilon z, z^2)$ for some $\varepsilon \in
\mathbb{C}$, then
$I$ is an Ulrich ideal of colength $2$ with minimal reduction
$Q=(y+\varepsilon z, z^2)$.
\item If we put $I=(x,y+\varepsilon z,z^3)$ for some $\varepsilon \in
\mathbb{C}$ with $\varepsilon^4=-1$,
then $I$ is an Ulrich ideal of colength $3$
with minimal reduction $Q=(y+\varepsilon z,z^3)$.
\item $I=(x,y^2,z^2)$, $(x,y^2\pm z^2,yz)$ are Ulrich ideals of
$\ell_A(A/I)=4$ and
$\overline{I}=\overline{\m^2}=(x,y^2,yz,z^2)$.
\end{enumerate}
\end{ex}

\par
We know that any diagonal hypersurface admits an Ulrich ideal if some
exponent is an even number.
How about the case that all exponents are odd numbers?

\begin{ex} 
\label{Diagonal} 
Let $A=k[[x,y,z]]/(x^{2n-1}+y^{2n-1}+z^{2n-1})$
be a hypersurface with $p_g(A)=\genfrac{(}{)}{0pt}{1}{2n-1}{3}$,
where $n \ge 2$ be an integer.
Then $A$ has an Ulrich ideal
$I = (x+y+y^2 +\cdots +y^{n-1},y^{n},z)$.
Actually, if we put $Q=(x+y+y^2 +\cdots +y^{n-1}+y^{n},z)$,
then it is a minimal reduction of $I$ such that
$I^2=QI$ and $\ell_A(A/Q)= 2 \cdot \ell_A(A/I)=2n$.
\par
Note that if $n=2$, then $A$ is a simple elliptic singularity of degree
$e=3$.
\end{ex}

\begin{acknowledgement}  The authors thank Shiro Goto for valuable discussions on good ideals and 
the number of generators of integrally closed ideals. 
\end{acknowledgement}
%

\vspace{-2mm}
\end{document}